\tikzset{double line with arrow/.style args={#1,#2}{decorate,decoration={markings,%
mark=at position 0 with {\coordinate (ta-base-1) at (0,1pt);
\coordinate (ta-base-2) at (0,-1pt);},
mark=at position 1 with {\draw[#1] (ta-base-1) -- (0,1pt);
\draw[#2] (ta-base-2) -- (0,-1pt);
}}}}
\numberwithin{equation}{section}
\numberwithin{figure}{section}
\newcommand{\C}{\mathbb{C}}
\newcommand{\R}{\mathbb{R}}
\newcommand{\Z}{\mathbb{Z}}
\newcommand{\GL}{\mathbf{GL}}
\newcommand{\Sp}{\mathbf{Sp}}
\newcommand{\SL}{\mathbf{SL}}
\newcommand{\Hom}{\mathrm{Hom}}
\newcommand{\Fix}{\mathrm{Fix}}
\newcommand{\Out}{\mathrm{Out}}
\newcommand{\Aut}{\mathrm{Aut}}
\newcommand{\Ga}{\Gamma}
\newcommand{\si}{\sigma}
\newcommand{\lra}{\longrightarrow}
\newcommand{\lmt}{\longmapsto}
\newcommand{\MB}{\mathbf{M}_B}
\DeclareMathOperator{\Dol}{Dol}
\newcommand{\MD}{\mathbf{M}_{\Dol}}
\DeclareMathOperator{\End}{End}
\DeclareMathOperator{\de}{deg}
\DeclareMathOperator{\rk}{rk}
\DeclareMathOperator{\Spec}{Spec}
\DeclareMathOperator{\Ext}{Ext}
\DeclareMathOperator{\Pic}{Pic}
\DeclareMathOperator{\Gal}{Gal}
\renewcommand{\phi}{\varphi}
\renewcommand{\rho}{\varrho}
\renewcommand{\leq}{\leqslant}
\renewcommand{\geq}{\geqslant}
\renewcommand{\H}{\mathbb{H}}
\DeclareMathOperator{\Sym}{Sym}
\DeclareMathOperator{\gr}{gr}
\DeclareMathOperator{\Imm}{Im}
\newcommand{\ov}[1]{\overline{#1}}
\newtheorem{theorem}{Theorem}[section]
\theoremstyle{definition}
\newtheorem{remark}[theorem]{Remark}
\newtheorem{prop}[theorem]{Proposition}
\newtheorem{definition}[theorem]{Definition}
\newtheorem{corollary}[theorem]{Corollary}
\newtheorem{lem}[theorem]{Lemma}
\title[Real Bialynicki-Birula flows]{Real Bialynicki-Birula flows in moduli spaces of\\ Higgs bundles}
\author{Florent Schaffhauser}
\address{Florent Schaffhauser, Institut f\"ur Mathematik, Universität Heidelberg, Im Neuenheimer Feld 205, 69120 Heidelberg, Germany}
\email{fschaffhauser@mathi.uni-heidelberg.de}
\author{Tommaso Scognamiglio}
\address{Tommaso Scognamiglio, Universitá di Bologna, Piazza di Porta S. Donato, 5, 40126 Bologna, Italy}
\email{scognamiglio@altamatematica.it}
\subjclass{Primary 14D20; Secondary 14P25}
\keywords{Higgs bundles, Bialynicki-Birula decomposition, real locus.}
\begin{document}

\begin{abstract}
Let $X$ be a compact Riemann surface $X$ of genus $\geq 2$ and let $\sigma:X \to X$ be an anti-holomorphic involution. Using real and quaternionic systems of Hodge bundles, we study the topology of the real locus $\R \MD(r,d)$ of the moduli space of semistable Higgs bundles of rank $r$ and degree $d$ on $X$, for the induced real structure $(E,\phi) \to (\sigma^*(\overline{E}),\sigma^*(\overline{\phi}))$. We show in particular that, when $\gcd(r,d)=1$, the number of connected components of $\R \MD(r,d)$ coincides with that of $\R \Pic_d(X)$, which is well-known.
\end{abstract}

\maketitle

\vspace{-15pt}

\tableofcontents

\vspace{-20pt}

\section{Introduction}

\subsection{Real structures} Let $X$ be a compact and connected Riemann surface of genus $g \geq 1$. A Higgs bundle on $X$ is a pair $(E,\phi)$ where $E$ is a holomorphic vector bundle over $X$ and $\phi:E \to E \otimes \Omega^1_X$ is a morphism of $\mathcal{O}_X$-modules. For all $r \in \Z_{>0}$ and $d \in \Z$, there exists quasi-projective algebraic variety $\MD(r,d)$, called the Dolbeault moduli space, which is the moduli space of semistable Higgs bundles over $X$ of rank $r$ and degree $d$. We denote by $\gcd(r,d)$ the greatest common divisor of $r$ and $d$. Recall that, if $\gcd(r,d)=1$, every $(E,\phi) \in \MD(r,d)$ is stable and the variety $\MD(r,d)$ is smooth. In general, the subvariety of stable points $\MD^{st}(r,d) \subseteq \MD(r,d)$ is the smooth locus of $\MD(r,d)$.

\smallskip

We are interested in the study of these spaces under the additional presence of a \textit{real structure}, i.e.\ an anti-holomorphic involution $\sigma:X \to X$, fixed once and for all. We will refer to the pair $(X,\sigma)$ as a \textit{Klein surface}. We denote indifferently by $X^{\sigma}$ or by $\R X$ the fixed-point set of $\si$ in $X$, also called the real locus of $X$. The real structure $\si$ on $X$ induces an anti-holomorphic involution 
\begin{equation}\label{Galois_action_Dolbeault}
\begin{array}{rcl}
\sigma: \MD(r,d) & \longrightarrow & \MD(r,d) \\
\big(E,\phi\big) & \longmapsto & \big(\sigma^*(\overline{E}),\sigma^*(\overline{\phi})\big) .
\end{array}
\end{equation} In particular, we are interested in the study of the topology of the set of real points $\R \MD(r,d)=\MD(r,d)^{\sigma}$, which is a real algebraic variety. These spaces are examples of \textit{branes} inside the Hitchin moduli space of harmonic bundles and we refer to \cite{BaSc1} for more details on this.

\smallskip

The study of the real points $\R \MD(r,d)$ is closely related to the understanding of \textit{real} and \textit{quaternionic} Higgs bundles on $X$. Recall that a real/quaternionic Higgs bundle is a triple $(E,\phi,\alpha)$ where $(E,\phi)$ is a Higgs bundle and $\alpha: E \to \sigma^*(\ov{E})$ is an isomorphism such that $\sigma^*(\overline{\alpha})\alpha=\pm\mathrm{Id}_E$. In particular, a real or quaternionic Higgs bundle $(E,\phi,\alpha)$ such that $(E,\phi)$ is semistable provides a real point $(E,\phi) \in \R\MD(r,d)$. As a partial converse, every \textit{stable} Higgs bundle $(E,\phi) \in \R \MD^{st}(r,d)$ can be endowed with either a real or a quaternionic structure $\alpha$ (but not both). Moreover, such an $\alpha$ is essentially unique (see \cref{realHiggsbundlemoduli} for details). We can therefore define a decomposition 
\begin{equation}
\label{decompositionintro1}
\R\MD^{st}(r,d)=\MD^{st,\R}(r,d) \bigsqcup \MD^{st,\H}(r,d)
\end{equation}
where the submanifolds $\MD^{st,\R}(r,d),\MD^{st,\H}(r,d)$ can be seen as moduli spaces of (geometrically stable) real/quaternionic Higgs bundles, respectively. In general, the points $\R \MD(r,d)$ do not have such a clear modular interpretation. For more on this, see \cref{polystable-real}.

\subsection{Bialynicki-Birula flows}

Scaling the Higgs field by a non-zero constant $t\in\C^*$ induces an action 
$t\cdot (E,\phi) \coloneqq (E, t\phi)$ of the multiplicative group $\C^*$ on the Dolbeault moduli space $\MD(r,d)$, which is best understood using the Hitchin fibration
$$
\begin{array}{rcl}
h : \MD(r,d) & \lra & \bigoplus_{i=1}^r H^0(X,\Omega_X^{\otimes i}) \\
(E,\phi) & \lmt & \text{coefficients of } \det(\phi-x\mathrm{Id}).
\end{array}
$$
Since the coefficients of the characteristic polynomial are elementary symmetric polynomials in the eigenvalues of $\phi$, the Hitchin fibration is $\C^*$-equivariant with respect to an appropriately scaled action of $\C^*$ on the symmetric differentials over $X$. As $\lim_{t\to 0} t\cdot \det(\phi-x\mathrm{Id})$ always exists in the vector space $\oplus_{i=1}^r H^0(X,\Omega_X^{\otimes i})$, the properness of $h$ (\cite[Theorem~6.11]{Simpson_IHES_2} implies that, for every $(E,\phi)\in\MD(r,d)$, the action map $\C^* \ni t \mapsto t \cdot (E,\phi)$ extends to a morphism of algebraic varieties $f_{(E,\phi)} : \C \to \MD(r,d)$. The Higgs bundle $(E_0,\phi_0) \coloneqq f_{(E,\phi)}(0)$ is usually denoted by $\lim_{t\to 0} t\cdot (E,\phi)$ and it is a fixed point of the $\C^*$-action. The $\C^*$-equivariance of $h$ implies that $h(E_0,\phi_0) = 0$ for every $\C^*$-fixed point in $\MD(r,d)$, so the fixed-point set $\MD(r,d)^{\C^*}$ is a closed subvariety of the so-called nilpotent cone $h^{-1}(0) \subset \MD(r,d)$, consisting of Higgs bundles $(E,\phi)$ with nilpotent Higgs field. As $h$ is proper, this also implies that $\MD(r,d)^{\C^*}$ is projective. We will call the map $f_{(E,\phi)} : \C \to \MD(r,d)$ the \textit{Bialynicki-Birula flow} of $(E,\phi)$ (or simply its BB flow). Often, we denote it simply by $f$. Since $\forall t \in\C^*, f(t) = t \cdot f(1)$, a BB flow is determined by its value at $1$.

\smallskip

The existence of Bialynicki-Birula flows for all $(E,\phi)\in\MD(r,)$, together with the fact that the fixed-point set of the $\C^*$-action is projective, makes $\MD(r,d)$ a semiprojective variety in the sense of \cite[Definition~1.1.1]{HRV}. By equivariance of the Hitchin fibration, if $\lim_{t\to\infty} t\cdot (E,\phi)$ exists, then $(E,\phi)$ is nilpotent. And by the properness of $h$, the converse is true. So a Higgs bundle $(E,\phi)$ is nilpotent if and only if its BB flow $f : \C \to \MD(r,d)$ extends to a morphism of algebraic varieties $\widehat{f} : \mathbf{P}^1 \to \MD(r,d)$. In particular, the nilpotent cone $h^{-1}(0)$ is the so-called \emph{core} of the semiprojective variety $\MD(r,d)$, which shows that $h^{-1}(0)$ is a deformation retract of $\MD(r,d)$ (\cite[Corollary~1.3.6]{HRV}). We refer to \cite{HRV,HauselHitchin} for more properties of the semiprojective variety $\MD(r,d)$.

\begin{definition}
Let $(E,\phi)\in\MD(r,d)$. The \textit{BB flow} of $(E,\phi)$ is the morphism of algebraic varieties $f : \C \to \MD(r,d)$ defined, for all $t\in\C^*$, by $f(t) = t \cdot (E,\phi)$. A BB flow is called \textit{complete} if it extends to a morphism of algebraic varieties $f : \mathbf{P}^1 \to \MD(r,d)$.
\end{definition}

As discussed above, the Higgs bundles with complete BB flows are exactly the nilpotent Higgs bundles. If we denote by $\mathcal{N}(r,d)$ the subvariety of $\MD(r,d)$ consisting of semistable Higgs bundles $(E,\varphi)$ with $\varphi=0$, then there is an inclusion $\mathcal{N}(r,d) \subseteq \MD(r,d)^{\C^*}$. Evidently, the variety $\mathcal{N}(r,d)$ can be identified with the moduli space of semistable vector bundles of rank $r$ and degree $d$. A characterisation of the fixed locus $\MD(r,d)^{\C^*}$ in terms of \textit{systems of Hodge bundles} was given by Simpson in \cite{Simpson_IHES_2},\cite{Simpson_LocalSystems}. Recall that a Higgs bundle $(E,\phi)$ is said to admit the structure of a system of Hodge bundles if there exists a decomposition into locally free $\mathcal{O}_X$-modules $E=\bigoplus_{p,q \in \mathbb{Z}}E^{p,q}$  such that $\phi(E^{p,q}) \subseteq E^{p-1,q+1} \otimes \Omega^1_X$. Using this description, Simpson showed that, for all $r$ and $d$, the moduli space $\MD(r,d)$ is connected (\cite[Corollary~11.10]{Simpson_IHES_2}. A key point in Simpson's proof is \cite[Lemma 11.9]{Simpson_IHES_2}, thanks to which he shows that any point $(E,\phi) \in \MD(r,d)$ can be connected to a point in $\mathcal{N}(r,d)$ through a finite sequence of BB flows, as illustrated in \cref{fig:BB_flows_to_Nrd}. We will recall the strategy of Simpson's proof for the connectedness of $\MD(r,d)$ in more detail in \cref{sectionHodgebundles}, but the point is that the connectedness of $\MD(r,d)$ follows from that of $\mathcal{N}(r,d)$, which was first proven by Narasimhan and Seshadri (\cite{NS}).

\smallskip

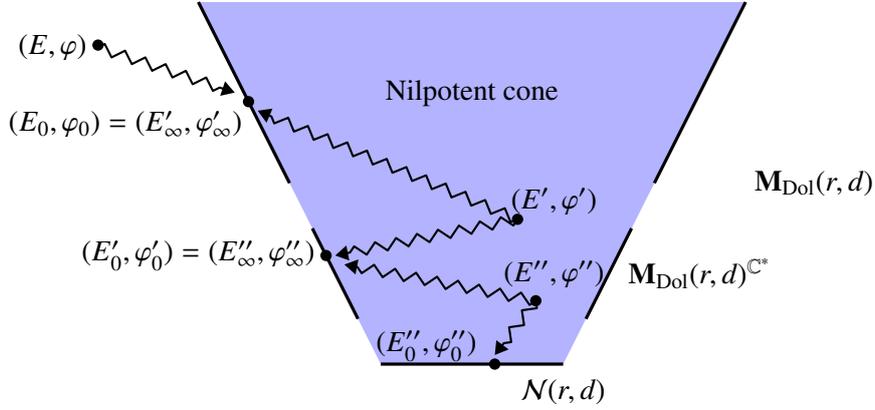
\begin{figure}[!h]
    \centering
    \begin{tikzpicture}[scale=1.2]
    \fill[blue!30] (-1,0) -- (-3,4) to (3,4) -- (1,0) -- cycle;
    
    \draw[very thick] (-1.75,1.5) -- (-1.25,0.5);
    \draw[very thick] (-2,2) -- (-3,4);
    \draw[very thick] (2,2) -- (3,4);
    \draw[very thick] (1.75,1.5) -- (1.25,0.5);
    \draw[very thick] (-1,0) -- (1,0);
    
    \node at (3.75,2) {$\mathbf{M}_{\mathrm{Dol}}(r,d)$};
    \node at (2.5,1) {$\mathbf{M}_{\mathrm{Dol}}(r,d)^{\mathbb{C}^*}$};
    \node at (0,3) {Nilpotent cone};
    \node at (1,-0.3) {$\mathcal{N}(r,d)$};
    
    \node[circle,fill,inner sep=1.5pt] at (-4.1,3.525) {};
    \node at (-4.6,3.5) {$(E,\phi)$};
            
    \node[circle,fill,inner sep=1.5pt] at (-2.45,2.9) {};
    \node at (-3.8,2.65) {$(E_0, \phi_0) = (E'_\infty, \phi'_\infty)$};

    \draw[-Triangle, thick, line join=round,
decorate, decoration={
    zigzag,
    segment length=8,
    amplitude=2,post=lineto,
    post length=2pt
}] 
        (-4.1,3.5) to (-2.6,3);
    
    \node[circle,fill,inner sep=1.5pt] at (0.5,1.6) {};
    \node at (0.9,1.8) {$(E',\phi')$};
    
    \draw[-Triangle, thick, line join=round,
decorate, decoration={
    zigzag,
    segment length=8,
    amplitude=2,post=lineto,
    post length=2pt
}] 
        (0.5,1.6) to (-2.35,2.8);
    
    \node[circle,fill,inner sep=1.5pt] at (-1.6,1.2) {};
    \node at (-3,1.2) {$(E'_0,\phi'_0) = (E''_\infty,\phi''_\infty)$};
    
    \draw[-Triangle, thick, line join=round,
decorate, decoration={
    zigzag,
    segment length=8,
    amplitude=2,post=lineto,
    post length=2pt
}] 
        (0.5,1.6) to (-1.5,1.2);
        
    \node[circle,fill,inner sep=1.5pt] at (0.7,0.7) {};
    \node at (0.9,1) {$(E'',\phi'')$};
    
    \draw[-Triangle, thick, line join=round,
decorate, decoration={
    zigzag,
    segment length=8,
    amplitude=2,post=lineto,
    post length=2pt
}] 
        (0.7,0.7) to (-1.4,1.1);

    \node[circle,fill,inner sep=1.5pt] at (0.25,0) {};
    \node at (-0.5,0.2) {$(E''_0,\phi''_0)$};
    
    \draw[-Triangle, thick, line join=round,
decorate, decoration={
    zigzag,
    segment length=8,
    amplitude=2,post=lineto,
    post length=2pt
}] 
        (0.7,0.7) to (0.25,0.1);

\end{tikzpicture}
    \caption{Bialynicki-Birula flows in the Dolbeault moduli space.}
    \label{fig:BB_flows_to_Nrd}
\end{figure}

In the real case, the space $\R\mathcal{N}(r,d)$ is not connected in general (\cite{Sch_JSG}) and neither is $\R\MD(r,d)$. Indeed, if for instance $\R X = \emptyset$, it can occur that the submanifolds $\MD^{\R}(r,d)$ and $\MD^{\H}(r,d)$ are both non-empty even if $\gcd(r,d) = 1$, so the open-closed decomposition (\ref{decompositionintro1}) implies that $\R \MD(r,d)$ has at least two connected components in this case. Similarly, if $\R X$ has $n > 0$ and $\gcd(r,d) =1$, then $\R \mathcal{N}(r,d)$ has $2^{n-1}$ connected components \cite{Sch_JSG}, so even if there are no quaternionic Higgs bundles, we cannot apply Simpson's proof directly. However, we are able to adapt it in order to count the connected components of the real algebraic variety $\R \MD(r,d)$. To outline our strategy, let us consider, for all $d \in \Z$, the moduli space of line bundles of degree $d$ on $X$, which we shall denote by $\Pic_d(X) \coloneqq \mathcal{N}(1,d)$. Its real locus $\R \Pic_d(X)$ is not connected in general and we refer to \cref{description-fibers} for a description of its connected components. Let $p:\R \Pic_d(X) \to \pi_0(\R \Pic_d(X))$ be the continuous map sending a line bundle $L \in \Pic_d(X)$ to the connected component of $\Pic_d(X)$ containing it. Denote by $c$ the composed map 
\begin{equation}
    \label{typeGaloisInvariantHiggsBundle}
    \begin{array}{rcl}
    c: \R \MD(r,d) & \longrightarrow & \pi_0\big(\R \Pic_d(X)\big) \\
    (E,\phi) & \longmapsto & p\big(\det(E)\big)
    \end{array}
\end{equation}
and, for every $s \in \pi_0(\R \Pic_d(X))$, put $\R \MD(r,d,s)\coloneqq c^{-1}(s)$. Our main result is then the following (see \cref{CC_real_locus} for the proof).

\begin{theorem}\label{main-result-intro}
For all $r,d$ such that $\gcd(r,d)=1$ and all $s \in \pi_0(\R \Pic_d(X))$, the space $\R\MD(r,d,s)\coloneqq c^{-1}(s)$ is connected.    
\end{theorem}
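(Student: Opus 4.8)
The plan is to adapt Simpson's proof of the connectedness of $\MD(r,d)$ to the real setting, by running Bialynicki--Birula flows that remain inside the real locus. The first point is how $\sigma$ interacts with the $\C^*$-action: since
\[
\sigma\big(t \cdot (E,\phi)\big) = \sigma(E, t\phi) = \big(\sigma^*\ov E,\ \bar t\, \sigma^*\ov\phi\big) = \bar t \cdot \sigma(E,\phi),
\]
the involution $\sigma$ is anti-equivariant, conjugating the parameter $t$. Hence if $(E,\phi)$ is real then its BB flow $f$ satisfies $\sigma(f(t)) = f(\bar t)$; in particular $f$ maps the real axis $\R \subset \C$ into $\R\MD(r,d)$, and the fixed point $f(0) = \lim_{t\to 0} t\cdot(E,\phi)$ is again real. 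Restricting a real flow to $\R_{>0}$ therefore yields a path inside $\R\MD(r,d)$ joining a real point to a real $\C^*$-fixed point, and restricting a real \emph{complete} flow to $\R_{>0}$ joins its two real limits at $0$ and $\infty$. The second point is that along the orbit ($t\neq 0$) the underlying bundle $E$ is unchanged, while each limit bundle is an associated graded $\gr_F E$ for a suitable filtration $F$; since $\det(\gr_F E)=\det E$, the line bundle $\det E$ — and therefore the class $c(E,\phi)\in\pi_0(\R\Pic_d(X))$ — is constant along every real BB flow, and also along the descending flows below. Thus all the paths we build stay inside a single fibre $\R\MD(r,d,s)=c^{-1}(s)$.

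With these observations the argument proceeds as in Simpson. Given $(E,\phi)\in\R\MD(r,d,s)$, its real BB flow connects it, inside $c^{-1}(s)$, to a real $\C^*$-fixed point $(E_0,\phi_0)$, that is, to a real or quaternionic system of Hodge bundles. If $\phi_0\neq 0$ we apply a real refinement of Simpson's Lemma~11.9: there is a \emph{real} nilpotent Higgs bundle $(E',\phi')$ whose $\infty$-limit is $(E_0,\phi_0)$ and whose $0$-limit $(E'_0,\phi'_0)$ is a fixed point strictly lower for the width of the Hodge decomposition. Restricting the complete real flow of $(E',\phi')$ to $\R_{>0}$ gives a path inside $c^{-1}(s)$ from $(E_0,\phi_0)$ to $(E'_0,\phi'_0)$. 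Iterating decreases the width until $\phi=0$, landing us in $\R\mathcal{N}(r,d)\cap c^{-1}(s)$. This connects every point of $\R\MD(r,d,s)$, within $\R\MD(r,d,s)$, to the vector-bundle locus $\R\mathcal{N}(r,d)\cap c^{-1}(s)$.

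It remains to settle the base case $\phi=0$, namely that $\R\mathcal{N}(r,d)\cap c^{-1}(s)$ is connected. Here $c$ factors through the determinant $\det:\R\mathcal{N}(r,d)\to\R\Pic_d(X)$ followed by $p$. For $\gcd(r,d)=1$ one knows, from the description of these real loci (e.g.\ \cite{Sch_JSG}), that the determinant induces a bijection $\pi_0(\R\mathcal{N}(r,d))\to\pi_0(\R\Pic_d(X))$ with connected fibres — equivalently, that the moduli of real/quaternionic bundles of fixed real determinant is connected, via the real Narasimhan--Seshadri correspondence. Combining this with the flow argument shows that each $\R\MD(r,d,s)$ is connected.

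The main obstacle is the real refinement of Lemma~11.9. In the complex case one descends from $(E_0,\phi_0)$ by perturbing the Higgs field on $E_0=\bigoplus_{p,q}E^{p,q}$ by an off-diagonal term, obtaining $(E',\phi')$ with $\infty$-limit $(E_0,\phi_0)$ and a strictly lower $0$-limit. To keep the resulting path inside $\R\MD(r,d,s)$, the Higgs bundle $(E',\phi')$ must itself be a real point: the perturbing term and the accompanying filtration of $E'$ have to be chosen compatibly with the real or quaternionic structure carried by $(E_0,\phi_0)$. Providing this equivariant construction is exactly where the theory of real and quaternionic systems of Hodge bundles is required. Once $(E',\phi')$ is real, its type agrees with that of $(E_0,\phi_0)$ automatically, since the decomposition (\ref{decompositionintro1}) is open and closed and the connecting flow is continuous; so no spurious jump between the real and quaternionic strata can occur.
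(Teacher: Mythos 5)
Your architecture matches the paper's exactly: real BB flows preserve each fibre $c^{-1}(s)$ because $\sigma(t\cdot(E,\phi))=\ov{t}\cdot\sigma(E,\phi)$ and the fibres are open and closed; one descends to an $\R^*$-fixed (hence $\C^*$-fixed) real point, applies a real refinement of Simpson's Lemma~11.9, iterates until the Higgs field vanishes, and then invokes the known connectedness of $\R\mathcal{N}(r,d,s)$ for $\gcd(r,d)=1$ from \cite{Sch_JSG}. In the coprime case every point is stable and geometrically stable, so you are right that the polystable and non-geometrically-stable cases (which the paper must treat for \cref{implication-realpart} in general) can be ignored here.

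However, the one step you defer --- ``the perturbing term and the accompanying filtration have to be chosen compatibly with the real or quaternionic structure'' --- is precisely where the content of the proof lies, and it is not automatic. Simpson's decomposition of a fixed point $E_0=\bigoplus E^{p,q}$ is built from the characteristic subbundles $E_\lambda$ of an automorphism $f$ intertwining $\phi_0$ and $s\phi_0$; the real/quaternionic structure $\tau$ does \emph{not} preserve the individual $E_\lambda$ but sends $E_\lambda$ to $E_{\ov{\lambda}}$ (after arranging $f\tau=\tau f$, which itself uses stability and a rescaling of $f$). One must then check that, because $s\in\R^*\setminus\{\pm1\}$ forces $s^j\lambda\neq\ov{\lambda}$ for $j>0$, the chains $\{s^j\lambda\}$ and $\{s^j\ov{\lambda}\}$ can be regrouped into $\tau$-stable summands $E^n\oplus\cdots\oplus E^0$; this is the paper's \cref{realHodgebundletheorem}. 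Only then does $E^0\otimes(E^n)^*$ acquire a genuine \emph{real} structure (note that $\tau\otimes\tau^*$ squares to $+\mathrm{Id}$ even when $\tau$ is quaternionic), so that the nonzero space $\Ext^1(E^n,E^0)$ contains a nonzero $\tau$-fixed extension class and the deformed bundle $M_t$, $t\in\R$, inherits a real/quaternionic structure, as in \cref{downwardflowsfixed}. Without this lemma the existence of a real perturbation class is unjustified. A second, smaller inaccuracy: the iteration does not terminate because ``the width of the Hodge decomposition'' decreases --- that is not what Simpson proves, and it is not clear for the new $0$-limit. The termination argument actually used (following Simpson and reproduced in \cref{implication-realpart}) linearizes the $\C^*$-action and observes that the weight $\beta$ satisfies $\beta^0(z)<\beta^\infty(z)$ for every non-fixed $z$, while the set of weights is finite.
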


Theorem \ref{main-result-intro} is obtained by studying the properties of the BB flows in the presence of a real structure on the Dolbeault moduli space. Indeed, when $X$ is endowed with a real structure $\si$, the $\C^*$-action is compatible with the induced real structure \eqref{Galois_action_Dolbeault}, in the sense that $\sigma\big(t\cdot(E,\phi)\big) = \ov{t} \cdot \si(E,\phi)$. This shows that if $f:\C\to\MD(r,d)$ is a BB flow, then so is the map $f^\si(t) \coloneqq \si(f(\ov{t}))$ and this defines a Galois action on the set of BB flows. Another consequence of the compatibility relation is that there is an induced $\R^*$-action on $\R\MD(r,d)$ and that, by restriction to $\R$, the BB flow of a Galois-invariant Higgs bundle $(E,\phi)\in\R\MD(r,d)$ induces a morphism of real algebraic varieties $\R f \coloneqq f|_{\R} : \R \to \R \MD(r,d)$, which we call a \textit{real BB flow} (see \cref{limit-proposition}). Likewise, complete BB flows $\widehat{f} : (\mathbf{P}^1,\si) \to (\MD(r,d),\si)$ are real maps that induce morphisms $\R\widehat{f}: \R\mathbf{P}^1 \to \R \MD(r,d)$. Moreover, by definition of the map $c : \R \MD(r,d) \to \pi_0(\R \Pic_d(X))$, the $\R^*$-action on $\R\MD(r,d)$ preserves each fibre $\R \MD(r,d,s) \coloneqq c^{-1}(s)$. We will then show in \cref{CC_real_locus_section} that any point in $\R\MD(r,d,s)$ can be connected, via a finite sequence of real BB flows, to a point in $\R\mathcal{N}(r,d,s)$. More precisely, this will be obtained as an application of \cref{realHodgebundletheorem,flowrealpart-ss}. When $\gcd(r,d)=1$, the spaces $\R \mathcal{N}(r,d,s)$ are known to be connected (\cite[Theorem 3.8]{Sch_JSG} ), so Theorem \ref{main-result-intro} follows. As a corollary, we obtain the following explicit count of connected components of $\R\MD(r,d)$. This result also provides a modular interpretation of the connected components of $\R\MD(r,d)$ that extends the results of \cite{Sch_JSG} to the Higgs case (see \cref{Dol_CC_when_RX_ne,Dol_CC_when_RX_empty}).

\begin{theorem}\label{count_cc_intro}
    If $\gcd(r,d)=1$, the map $c : \R\MD(r,d) \to \pi_0(\R\Pic_d(X))$ introduced in \eqref{typeGaloisInvariantHiggsBundle} induces a bijection $\pi_0(\R\MD(r,d)) \simeq \pi_0(\R\Pic_d(X))$. In particular:
    \begin{enumerate}
        \item If $\R X$ has $n > 0$ connected components, then $\R\MD(r,d)$ has $2^{n-1}$ components.
        \item If $\R X = \emptyset$, then $\R\MD(2r'+1,2d')$ has $2$ connected components if $g$ is odd and $1$ if $g$ is even, while $\R\MD(r,2d'+1)$ has $1$ connected component if $r$ is odd and $g$ is even and $0$ if $r$ is even or $g$ is odd.
    \end{enumerate} 
\end{theorem}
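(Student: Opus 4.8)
The plan is to derive the bijection from \cref{main-result-intro} together with the known description of $\pi_0(\R\Pic_d(X))$, and to read off (1)--(2) from the latter. First I would record that $c$ is continuous once $\pi_0(\R\Pic_d(X))$ carries the discrete topology: it is the composite of the determinant morphism $\det:\R\MD(r,d)\to\R\Pic_d(X)$, $(E,\phi)\mapsto\det(E)$, with the locally constant map $p$. Since the target is finite, each fibre $\R\MD(r,d,s)=c^{-1}(s)$ is open and closed. By \cref{main-result-intro} every such fibre is connected, so each nonempty fibre is a single connected component of $\R\MD(r,d)$. Hence the induced map $\pi_0(\R\MD(r,d))\to\pi_0(\R\Pic_d(X))$ is injective and identifies $\pi_0(\R\MD(r,d))$ with the image of $c$.

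It then remains to control this image, i.e.\ to decide which components of $\R\Pic_d(X)$ arise as $\det(E)$ for some real point. Here I would reduce to the vanishing Higgs field $\phi=0$: since $\det$ is constant along BB flows, a fibre $\R\MD(r,d,s)$ is nonempty if and only if it meets $\R\mathcal N(r,d,s)$, because by the discussion following \cref{main-result-intro} any real point can be joined to a point with $\phi=0$ through a finite sequence of real BB flows preserving $s$. Thus the image of $c$ coincides with the image of $\det:\R\mathcal N(r,d)\to\R\Pic_d(X)$ on the moduli of semistable bundles, which is governed by \cite[Theorem~3.8]{Sch_JSG}. When $\R X$ has $n>0$ components, a real point $x\in\R X$ yields the degree-one real line bundle $\mathcal O_X(x)$, giving enough flexibility to realise every component of $\R\Pic_d(X)$; combined with the classical count $\#\pi_0(\R\Pic_d(X))=2^{n-1}$, this produces the bijection and statement (1).

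The case $\R X=\emptyset$ is where the real work lies, and I expect it to be the main obstacle. Here there is no real point of $X$ to tensor with, and whether a component of $\R\Pic_d(X)$ is attained depends on the real/quaternionic type of the determinant: passing the relation $\si^*(\ov\alpha)\alpha=\pm\mathrm{Id}_E$ to determinants shows that a real bundle of rank $r$ has a real-type determinant, whereas a quaternionic bundle of rank $r$ has determinant of type $(-1)^r$. Consequently the parity of $r$ selects which $\si$-invariant line bundles can occur as $\det(E)$, while the parities of $d$ and $g$ govern the existence of $\si$-invariant line bundles of the relevant degree and type --- for instance through a $\si$-invariant theta characteristic, which produces an invariant line bundle of degree $g-1$. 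Tracking these constraints alongside the open--closed decomposition \eqref{decompositionintro1} and the count of real/quaternionic bundles in \cite[Theorem~3.8]{Sch_JSG} is precisely what yields the explicit numbers in (2).

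In summary, the skeleton --- continuity of $c$, connected fibres via \cref{main-result-intro}, hence injectivity on $\pi_0$ --- is routine; the substance is the surjectivity/image analysis, which I would carry out by transporting the question along the real BB flows to $\R\mathcal N(r,d)$ and invoking \cite[Theorem~3.8]{Sch_JSG}. The delicate point, and the one I expect to require the most care, is the $\R X=\emptyset$ regime, where the bookkeeping of real versus quaternionic determinant types against the arithmetic of $\R\Pic_d(X)$ must be matched exactly with the decomposition $\R\MD^{st}(r,d)=\MD^{st,\R}(r,d)\sqcup\MD^{st,\H}(r,d)$.
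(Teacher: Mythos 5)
Your proposal is correct and follows essentially the same route as the paper: injectivity on $\pi_0$ comes from the open--closedness and connectedness of the fibres $c^{-1}(s)$ (\cref{main-result-intro}, i.e.\ \cref{CC_real_locus}), and the image of $c$ is determined by the case analysis of \cref{description-fibers}, matching the real/quaternionic type of $\det(E)$ against the Gross--Harris description of $\pi_0(\R\Pic_d(X))$ and the existence results for real and quaternionic bundles. The only cosmetic difference is that the paper reads off non-emptiness directly from the modular interpretation of $\R\MD^{st}(r,d)$ and the cited classification of $\R\Pic_d(X)$, rather than invoking a $\sigma$-invariant theta characteristic, but this does not change the substance of the argument.
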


If $\gcd(r,d)\neq 1$, we no longer have a proof that the spaces $\R\MD(r,d,s)$ are connected. However, our proof of Theorem \ref{main-result-intro} shows that, if $\R \mathcal{N}(r,d,s)$ happens to be connected, then the variety $\R \MD(r,d,s)$ is connected, too (\cref{implication-realpart}). In \cref{example-elliptic-real}, we show that this can occur when $X$ even if $\gcd(r,d)\neq1$.

\subsection{Real non-abelian Hodge correspondence and character varieties}

By the non\-abelian Hodge correspondence of Hitchin, Donaldson, Corlette and Simpson \cite{Simpson_LocalSystems}, there exists a homeomorphism 
$
\MD(r,d) \simeq \MB(r,d)
$
between the Dolbeault moduli space and the Betti moduli space, where the latter is defined as the $\GL_r(\C)$-character variety a certain central extension $\pi_d$ of $\pi_1X$ by $\Z$ (see \cref{CharVarSection} for details). While the $\C^*$-action and the Hitchin fibration cannot be constructed directly on the Betti side of the correspondence, our results on the topology of the real locus of $\MD(r,d)$ translate naturally to $\MB(r,d)$. Indeed, by Galois-equivariance of the nonabelian Hodge correspondence, the real structure $\si(E,\phi) = (\si^*(\ov{E}), \si^*(\ov{\phi}))$ of \eqref{Galois_action_Dolbeault} corresponds, on the Betti side, to the real structure $\beta_\si([\rho]) \coloneqq [\ov{\rho\circ\si_*}]$, which is well-defined on the character variety, even though $\si_*:\pi_1(X,x) \to \pi_1(X,\sigma(x))$ is not an involution in general (we refer to \cref{Galois_action_Betti} for a construction of the real structure $\beta_\si$). It is then immediate, when $\gcd(r,d)=1$ to deduce from \cref{count_cc_intro} a count of the connected components of $\R\MB(r,d)$ and we refer to \cref{CC_MBrd} for details. Moreover, we can give a modular interpretation of the connected components of $\R\MB(r,d)$ in terms of representations of orbifold fundamental groups. Namely, when $\gcd(r,d) = 1$, real points of the character variety of $\pi_d$ correspond to $\GL_r(\C)$-representations of $\pi_d$ that extend to representations of a certain \textit{non-central} extension of the orbifold fundamental group $\pi_1 (X/\Ga)$ by $\Z$ into either the semidirect product $\GL_r(\C) \rtimes \Ga$ or a non-trivial extension $\GL_r(\C) \times_\H \Ga$ (see \cref{orbifold_Betti}).

\smallskip

It is also possible to apply our results in a purely algebraic setting, to study the fixed-point set of the involutive automorphism
$\beta_{\widetilde{\si}}([\rho]) \coloneqq [\,^t(\rho\circ\si_*)^{-1}]$ of $\MB(r,d)$ obtained by replacing the anti-holomorphic involution of $\si(g) = \ov{g}$ of $\GL_r(\C)$ by the Cartan involution $\si(g) = \,^tg^{-1}$. This $\beta_{\widetilde{\si}}$ is now a \textit{holomorphic} involution of the Betti moduli space. On the Dolbeault side, however, it becomes the anti-holomorphic involution
\begin{equation}\label{alt_Galois_action_Dolbeault}
\widetilde{\sigma}: 
\begin{array}{rcl}
\MD(r,d) & \longrightarrow & \MD(r,d) \\
\big(E,\phi\big) & \longmapsto & \big(\sigma^*(\overline{E}),-\sigma^*(\overline{\phi})\big) .
\end{array}
\end{equation}
But the real structure \eqref{alt_Galois_action_Dolbeault} is conjugate to our original real structure \eqref{Galois_action_Dolbeault} via the order 4 automorphism $(E,\phi) \lmt (E,i\phi)$ of $\MD(r,d)$, as checked in \cref{conjugating_the_Galois_action_pf}. So the fixed-point set of the holomorphic involution $\beta_{\widetilde{\si}}$ is homeomorphic to the fixed-point set of the anti-holomorphic involution $\beta_\si$ in $\MB(r,d)$. On the Betti side, the modular interpretation of $\Fix(\beta_{\widetilde{\si}})$ is a direct analogue of that of $\Fix(\beta_\si)$, yielding the following result on twisted character varieties (see \cref{cxOrbifoldBetti} for details). 

\begin{theorem}
Let $\pi_d^\Ga$ be the orbifold fundamental group of the real Seifert bundle constructed in \eqref{realSeifertGroup_as_non_central_ext} using the real structure $\si: X \to X$ and let $\widetilde{\si}(g) = \,^t g^{-1}$ be the Cartan involution of $\GL_r(\C)$. If $\gcd(r,d) =1$ and $\R X \neq \emptyset$, then the twisted character variety
    $
    \Hom^\Z_\Ga(\pi_d^\Ga, \GL_r(\C) \rtimes_{\widetilde{\si}} \Gamma) / \GL_r(\C)
    $
    introduced in \cref{twistedCharVarSection} has $2^{n-1}$ connected components.
\end{theorem}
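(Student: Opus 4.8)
The plan is to reduce this theorem about twisted character varieties to the already-established count for the anti-holomorphic case, by exploiting the conjugacy between the two real structures noted in the excerpt. Specifically, the excerpt records that the holomorphic involution $\beta_{\widetilde{\si}}$ on $\MB(r,d)$ corresponds, under the nonabelian Hodge correspondence, to the anti-holomorphic involution $\widetilde{\si}$ of \eqref{alt_Galois_action_Dolbeault} on the Dolbeault side, and that $\widetilde{\si}$ is conjugate to the original real structure $\si$ of \eqref{Galois_action_Dolbeault} via the order-$4$ automorphism $(E,\phi)\mapsto(E,i\phi)$. Consequently $\Fix(\beta_{\widetilde{\si}})$ is homeomorphic to $\Fix(\beta_\si)=\R\MB(r,d)$, and so they have the same number of connected components. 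Under $\gcd(r,d)=1$ and $\R X\neq\emptyset$ with $n>0$ boundary circles, \cref{count_cc_intro} gives that $\R\MD(r,d)$, and hence $\R\MB(r,d)$, has $2^{n-1}$ connected components. The first step is therefore to invoke \cref{conjugating_the_Galois_action_pf} to identify the two fixed-point sets up to homeomorphism and conclude that $\pi_0$ agrees.

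The second, and more substantial, step is to match the geometric object $\Fix(\beta_{\widetilde{\si}})$ with the twisted character variety $\Hom^\Z_\Ga(\pi_d^\Ga,\GL_r(\C)\rtimes_{\widetilde{\si}}\Gamma)/\GL_r(\C)$ that appears in the statement. Here I would unwind the modular interpretation promised in the referenced section: a real point for the holomorphic involution $\beta_{\widetilde{\si}}([\rho])=[\,^t(\rho\circ\si_*)^{-1}]$ is a $\GL_r(\C)$-representation $\rho$ of $\pi_d$ satisfying $\,^t(\rho\circ\si_*)^{-1}\sim\rho$, i.e.\ $\rho$ is equivalent to the representation twisted by the composite of $\si_*$ and the Cartan involution. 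The plan is to show, exactly as in the anti-holomorphic case treated for $\beta_\si$, that such a $\rho$ extends to a representation of the non-central extension $\pi_d^\Ga$ of the orbifold fundamental group into the semidirect product $\GL_r(\C)\rtimes_{\widetilde{\si}}\Ga$, the intertwiner being encoded in the image of a lift of the generator of $\Ga=\Z/2$. The condition $\,^t(\rho\circ\si_*)^{-1}\sim\rho$ is precisely the obstruction-free analogue of the reality condition, and the equivalence classes of such extensions, modulo overall $\GL_r(\C)$-conjugation, are in bijection with $\Fix(\beta_{\widetilde{\si}})$.

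I expect the main obstacle to be the careful bookkeeping in the second step, rather than anything deep. When $\R X\neq\emptyset$ the stabiliser-and-extension argument does not branch into the real-versus-quaternionic dichotomy (that splitting only occurs when $\R X=\emptyset$, which is why the hypothesis $\R X\neq\emptyset$ is imposed), so one obtains a single type of extension valued in the semidirect product $\GL_r(\C)\rtimes_{\widetilde{\si}}\Ga$ with no nontrivial $\H$-twisted variant to track. Nonetheless, one must verify that the bijection between $\GL_r(\C)$-conjugacy classes of extensions and points of $\Fix(\beta_{\widetilde{\si}})$ is well-defined and respects the topology, so that it induces a bijection on $\pi_0$; this is where the uniqueness-up-to-scalar of the intertwiner (the analogue of the essential uniqueness of the real/quaternionic structure $\alpha$ discussed around \eqref{decompositionintro1}) must be invoked to control the fibres of the map from extensions to representations. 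Once this identification is in place, combining it with the homeomorphism $\Fix(\beta_{\widetilde{\si}})\simeq\R\MB(r,d)$ and the count $2^{n-1}$ from \cref{count_cc_intro} yields the theorem.
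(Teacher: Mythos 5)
Your proposal is correct and follows essentially the same route as the paper: conjugate $\widetilde{\si}$ to $\si$ on the Dolbeault side via $(E,\phi)\mapsto(E,i\phi)$, transport to a homeomorphism $\Fix(\beta_{\widetilde{\si}})\simeq\Fix(\beta_\si)$ on the Betti side, identify $\Fix(\beta_{\widetilde{\si}})$ with the twisted character variety using the orbifold modular interpretation (with the quaternionic branch absent), and invoke the $2^{n-1}$ count from \cref{CC_MBrd}. The only slight imprecision is your claim that the real/quaternionic splitting ``only occurs when $\R X=\emptyset$''; it is the conjunction of $\gcd(r,d)=1$ with $\R X\neq\emptyset$ that forces $\MB^{\H}(r,d)=\emptyset$, but this does not affect the argument.
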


On the Dolbeault side, however, the real locus of the anti-holomorphic involution \eqref{alt_Galois_action_Dolbeault} does not seem to have a clear interpretation in terms of algebraic objects defined over $\R$.

\section{Real points of Dolbeault moduli spaces}

\subsection{Moduli spaces of Higgs bundles}
 
Let $X$ be a compact and connected Riemann surface of genus $g \geq 2$. A Higgs bundle on $X$ is a pair $(E,\phi)$ consisting of a holomorphic vector bundle $E$ on $X$ and a morphism of $O_X$-modules $\phi : E \to E \otimes \Omega^1_X$, known as a Higgs field. The degree  and rank of a Higgs bundle $(E,\phi)$ are the degree and rank of the underlying vector bundle $E$. A morphism of Higgs bundles $f:(E_1,\phi_1) \to (E_2,\phi_2)$ is a morphism of vector bundles $f:E_1 \to E_2$ that makes the following diagram commute.
$$
\begin{tikzcd}
    E_1 \ar[d,"f"'] \ar[r, "\phi_1"'] & E_1 \otimes \Omega^1_X\ar[d, "f \otimes \mathrm{Id}_{\Omega^1_X}"']  \\
    E_2 \ar[r, "\phi_2"'] & E_2 \otimes \Omega^1_X
\end{tikzcd}
$$
Recall that the slope $\mu(E)$ of a non-zero vector bundle $E$ is defined as $\mu(E) \coloneqq \frac{\de(E)}{\rk(E)}$. We say that a Higgs bundle $(E,\phi)$ is semistable if, for every non-trivial $\phi$-invariant subbundle $F \subseteq E$, we have $\mu(F) \leq \mu(E)$. The Higgs bundle $(E,\phi)$ is said to be stable if the previous inequality is strict for every $F$. If $\gcd(r,d) = 1$, a Higgs bundle $(E,\Phi)$ is semistable  if and only if it is stable. For a stable Higgs bundle $(E,\phi)$, there is an equality $\End((E,\phi))=\{\lambda \mathrm{Id}_E, \lambda \in \C\}$, and for every semistable Higgs bundle $(E,\phi)$, there exists a filtration $0=E_{\ell} \subseteq E_{\ell-1} \subseteq \cdots \subseteq E_0=E$ by $\phi$-invariant subbundles (meaning that $\phi(E_j) \subset E_j \otimes \Omega^1_X$) such that, for each $j=1,\dots,\ell$, we have $\mu\left(E_{j-1}/E_j\right)=\mu(E)$ and $(E_{j-1}/E_j,\phi)$ is stable. We denote by $(\gr(E),\gr(\phi))$ the associated graded object 
$
\big(\gr(E),\gr(\phi)\big)\coloneqq (E_\ell,\phi) \oplus \cdots \oplus (E/E_1,\phi)
$.
Its graded isomorphism class depends only on the semistable Higgs bundle $(E,\phi)$, not on the choice of the filtration satisfying the previous conditions. The Higgs bundle $(E,\phi)$ is called polystable if it is semistable and $(E,\phi) \simeq (\gr(E),\gr(\phi))$. Moreover, two semistable Higgs bundles $(E,\phi),(E',\phi')$ are called $S$-equivalent if 
$
(\gr(E),\gr(\phi)) \simeq (\gr(E'),\gr(\phi')).
$
For all $r,d$, there exists a quasi-projective complex variety $\MD(r,d)$  of dimension $2(r^2(g-1) + 1)$ parameterizing $S$-equivalence classes of semistable Higgs bundles of rank $r$ and degree $d$ over $X$. Following Simpson, we will refer to these varieties as Dolbeault moduli spaces. If $\gcd(r,d)=1$, the moduli space $\MD(r,d)$ is smooth, see for instance \cite[Proposition 7.4]{Nitsure}. For all $r$ and $d$, there is an open smooth subvariety $\MD^{st}(r,d) \subseteq \MD(r,d)$, whose points are isomorphism classes of stable Higgs bundles of rank $r$ and degree $d$, and a subvariety $\mathcal{N}(r,d) \subseteq \MD(r,d)$, projective of dimension $r^2(g-1) + 1$ and consisting of semistable Higgs bundles $(E,\phi)$ such that $\phi=0$. The subvariety $\mathcal{N}(r,d)$ will be identified with the moduli space of semistable vector bundles of rank $r$ and degree $d$ over $X$. It contains an open smooth subvariety $\mathcal{N}^{st}(r,d) \subseteq \mathcal{N}(r,d)$, whose points are isomorphism classes of stable vector bundles of rank $r$ and degree $d$.

\subsection{The Galois action on Dolbeault moduli spaces}

Let $\sigma:X \to X$ be an anti-holomorphic involution of $X$, also called a real structure. The involution $\sigma$ determines a real projective curve $X_{\R}$, i.e.\ a smooth projective variety of dimension $1$ over $\Spec(\R)$ such that $X_{\R} \times_{\Spec(\R)} \Spec(\C) \simeq X$ and, via the isomorphism above, $\sigma$ corresponds to complex conjugation. The isomorphism above induces an identification $X_{\R}(\R)=X^{\sigma}$. We will denote by $n \geq 0$ the number of connected components of the fixed point set $X^{\sigma}$. Recall that each connected component of $X^{\sigma}$ is homeomorphic to a circle. In what follows, we will denote by $\Gamma\coloneqq \Gal(\C/\R)=\{1,\sigma\}$ the absolute Galois group of $\R$. We will also use the notation $\R X \coloneqq X^{\sigma}$. The group $\Gamma$ acts on $\MD(r,d)$ via $\sigma\big(E,\phi\big)=\big(\sigma^*(\overline{E}),\sigma^*(\overline{\phi})\big)$. This is an anti-holomorphic involution, or real structure, on $\MD(r,d)$, whose set of fixed points will be denoted by $\R \MD(r,d)$. If $\gcd(r,d) = 1$, it is naturally a real analytic manifold of real dimension $2r^2(g-1) + 1$. Our goal in this article is to study the topology of the space $\R \MD(r,d)$, in particular its connected components and their relation with $\R \mathcal{N}(r,d)$. In order to do so, we start by a quick review of the modular interpretation of the varieties $\R\MD(r,d)$, $\R\MD^{st}(r,d)$, $\R \mathcal{N}(r,d)$ and $\R \mathcal{N}^{st}(r,d)$.

\subsubsection{Real and quaternionic Higgs bundles}
\label{realmodular}

For a holomorphic vector bundle $E \to X$, we will say that an anti-holomorphic map $\tau: E \to E$ covers $\sigma$ if $\tau$ is fibrewise $\C$-antilinear and makes the following diagram commute.
$$
\begin{tikzcd}
E \ar[r,"\tau"'] \ar[d] &E \ar[d] \\
X \ar[r,"\sigma"'] &X
\end{tikzcd}
$$
Note that giving an anti-holomorphic map $\tau:E\to E$ covering $\sigma$ is equivalent to giving a morphism of holomorphic vector bundles $\alpha:E \to \sigma^*(\overline{E})$. Moreover, $\tau^2 = \pm \mathrm{Id}_E$ if and only if $\sigma^*(\overline{\alpha}) \alpha = \pm \mathrm{Id}_E$. The involution $\sigma$ induces a canonical antihomolorphic involution on $\Omega^1_X$ covering $\sigma:X \to X$, which we still denote by $\sigma:\Omega^1_X \to \Omega^1_X$. A real Higgs bundle is a triple $(E,\phi,\tau)$, where $(E,\phi)$ is a Higgs bundle and $\tau:E \to E$ is an anti-holomorphic involution covering $\sigma:X \to X$ and making the following diagram commute:
\begin{equation}
\label{diagramreal}
\begin{tikzcd}
E \ar[r, "\phi"'] \ar[d, "\tau"'] &  E \otimes \Omega^1_X  \ar[d, "\tau \otimes \sigma"'] \\
E \ar[r, "\phi"'] & E \otimes \Omega^1_X
\end{tikzcd}
\end{equation}
Quaternionic Higgs bundles are defined similarly, except that $\tau^2 = - \mathrm{Id}_E$. A morphism of real/quaternionic Higgs bundles $f : (E_1,\phi_1,\tau_1) \to (E_2,\phi_2,\tau_2)$ is a morphism of Higgs bundles $(E_1,\phi_1) \to (E_2,\phi_2)$ which commutes with $\tau_1$ and $\tau_2$. 

\smallskip

If $(E,\phi,\tau)$ is a real Higgs bundle, the involution $\tau$ endows $E$ with a structure of real algebraic vector bundle over $X$, meaning that there exists an algebraic vector bundle $E_{\R}$ over $X_{\R}$ such that we have an isomorphism $E_{\R} \times_{\Spec(\R)} \Spec(\C) \simeq E$
and, via this isomorphism, $\tau$ corresponds to the complex conjugation on the second factor. Note that giving such a map $\tau$ is equivalent to giving an isomorphism of Higgs bundles $\alpha : \big(E,\phi\big) \to \big(\sigma^*(\overline{E}),\sigma^*(\overline{\phi})\big)$ with the property that $\sigma^*(\overline{\alpha})\alpha= \mathrm{Id}_{E}$. If $(E,\phi,\tau)$ is a real Higgs bundle whose underlying Higgs bundle $(E,\phi)$ is semistable (of rank $r$ and degree $d$, say), then the corresponding point $(E,\phi) \in \MD(r,d)$ belongs to $\R\MD(r,d)$. However, not all real points of $\MD(r,d)$ come from real Higgs bundles. For instance, a quaternionic Higgs bundle whose underlying Higgs bundle is semistable also defines a real point of the Dolbeault moduli space.

\begin{remark}
If $\R X \neq \emptyset$ and $(E,\phi,\tau)$ is a quaternionic vector bundle over $X$, then $\rk(E)$ must be even. Indeed, taking a point $x \in \R X$, the map $\tau$ restricts to an antilinear map on the fiber $\tau_x :E_x \to E_x$ such that $\tau_x^2=-\mathrm{Id}$, i.e.\ to a quaternionic structure on the $\C$-vector space $E_x$, and having such a structure is only possible if the dimension of $E_x$ is even.
\end{remark}

\subsubsection{Real locus of the Dolbeault moduli space}

We then have the following results, on the structure of the set of real points of the stable locus of the Dolbeault moduli space. We refer to \cite[Proposition 3.8]{Sch_JDG} or \cite[Proposition 2.8]{Sch_JSG} for a proof (the arguments of \cite{Sch_JDG, Sch_JSG} are stated for Galois-invariant stable vector bundles but translates \textit{verbatim} to the case of Higgs bundles because the automorphism group of a stable Higgs bundle also reduces to $\C^*$). 

\begin{prop}
\label{realHiggsbundlemoduli}
Let $(E,\phi)$ be a stable Higgs bundle such that $(\sigma^*(\ov{E}), \sigma^*\ov{\phi}) \simeq (E,\phi)$, Then $(E,\phi)$ admits either a real or a quaternionic structure, but not both. Moreover, such a structure is unique up to isomorphism of real/quaternionic Higgs bundle.
\end{prop}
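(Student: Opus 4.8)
The plan is to exploit the fact, recalled just above, that a stable Higgs bundle has only scalar endomorphisms, so that every relevant automorphism of $(E,\phi)$ is of the form $\lambda\,\mathrm{Id}_E$ with $\lambda\in\C^*$. By hypothesis there is an isomorphism of Higgs bundles $\alpha:(E,\phi)\to(\sigma^*(\ov E),\sigma^*(\ov\phi))$, i.e.\ a morphism $\alpha:E\to\sigma^*(\ov E)$ intertwining the Higgs fields as in diagram \eqref{diagramreal}. First I would form the composite $\sigma^*(\ov\alpha)\,\alpha$, using the canonical identification $\sigma^*(\ov{\sigma^*(\ov E)})\simeq E$; this is again an automorphism of $(E,\phi)$, so by stability it equals $\lambda\,\mathrm{Id}_E$ for a unique $\lambda\in\C^*$. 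The whole statement then reduces to understanding this scalar.

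The key computation is that $\lambda$ is in fact real. To see this I would apply the operation $\beta\mapsto\sigma^*(\ov\beta)$ to the relation $\sigma^*(\ov\alpha)\,\alpha=\lambda\,\mathrm{Id}_E$: since this operation is a covariant functor that squares to the identity and sends $\lambda$ to $\ov\lambda$, it yields $\alpha\,\sigma^*(\ov\alpha)=\ov\lambda\,\mathrm{Id}_{\sigma^*(\ov E)}$. Composing on the right with $\alpha$ and comparing with the original relation gives $\lambda\alpha=\ov\lambda\alpha$, whence $\lambda=\ov\lambda\in\R^*$. Next I would record how $\lambda$ transforms under rescaling: replacing $\alpha$ by $\mu\alpha$ with $\mu\in\C^*$ multiplies $\lambda$ by $|\mu|^2>0$. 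Thus the \emph{sign} of $\lambda$ is independent of the choice of $\alpha$, and by taking $\mu=1/\sqrt{|\lambda|}$ we may normalise so that $\sigma^*(\ov\alpha)\,\alpha=\pm\mathrm{Id}_E$. A positive $\lambda$ gives a real structure and a negative $\lambda$ a quaternionic one; since the sign is an invariant of $(E,\phi)$, exactly one of the two possibilities occurs, which is the dichotomy asserted in the statement.

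For uniqueness I would take two structures $\alpha_1,\alpha_2$ of the same type, say both real. As both are isomorphisms $E\to\sigma^*(\ov E)$, the composite $\alpha_2^{-1}\alpha_1$ is an automorphism of $(E,\phi)$, hence equals $\nu\,\mathrm{Id}_E$ for some $\nu\in\C^*$, so $\alpha_1=\nu\alpha_2$. Substituting into the normalisation $\sigma^*(\ov{\alpha_i})\,\alpha_i=\mathrm{Id}_E$ forces $|\nu|^2=1$. Now an isomorphism of real Higgs bundles between $(E,\phi,\alpha_2)$ and $(E,\phi,\alpha_1)$ is an automorphism $g=c\,\mathrm{Id}_E$ of $(E,\phi)$ satisfying $\sigma^*(\ov g)\,\alpha_1=\alpha_2\,g$, that is $\ov c\,\nu=c$; since $|\nu|=1$, writing $\nu=e^{i\theta}$ and taking $c=e^{i\theta/2}$ solves this equation, producing the desired isomorphism. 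The quaternionic case is identical.

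The main obstacle is the sign bookkeeping in the second paragraph: one must handle the canonical identification $\sigma^*(\ov{\sigma^*(\ov E)})\simeq E$ carefully, and use that $\beta\mapsto\sigma^*(\ov\beta)$ is covariant (so it does \emph{not} reverse the order of composition), in order to derive $\lambda\in\R^*$ correctly. Everything else follows directly from $\End((E,\phi))=\C\cdot\mathrm{Id}_E$.
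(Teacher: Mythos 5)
Your proposal is correct and follows exactly the argument the paper points to (it cites \cite[Proposition 3.8]{Sch_JDG} and \cite[Proposition 2.8]{Sch_JSG} rather than proving the statement itself): reduce everything to the fact that $\End\big((E,\phi)\big)=\C\cdot\mathrm{Id}_E$ for a stable Higgs bundle, show the scalar $\sigma^*(\ov{\alpha})\alpha=\lambda\,\mathrm{Id}_E$ is real with sign independent of the choice of $\alpha$, normalise to $\pm\mathrm{Id}_E$, and deduce uniqueness from $|\nu|=1$. The sign bookkeeping in your second paragraph and the solution $c=e^{i\theta/2}$ in the uniqueness step are both carried out correctly.
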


We can therefore define a decomposition 
\begin{equation}
\label{decompositionDol}
\R\MD^{st}(r,d)=\MD^{st,\R}(r,d) \bigsqcup \MD^{st,\H}(r,d)
\end{equation}
where $\MD^{st,\R}(r,d)$ corresponds to the stable Higgs bundle that can be endowed with a real structure and $\MD^{st,\H}(r,d)$ to the ones that can be endowed with a quaternionic structure. In particular, if $\gcd(r,d) = 1$, we have a well-defined decomposition $\R\MD(r,d)=\MD^{\R}(r,d) \bigsqcup \MD^{\H}(r,d)$. Similarly, we have a decomposition for stable vector bundles $\R \mathcal{N}^{st}(r,d)=\mathcal{N}^{st,\R}(r,d) \sqcup \mathcal{N}^{st,\H}(r,d) $ depending on whether they can be endowed with a real or quaternionic structure. By the uniqueness part of \cref{realHiggsbundlemoduli}, the submanifolds $\MD^{st,\R}(r,d)$ and $\MD^{st,\H}(r,d)$ can be seen as moduli spaces of geometrically stable real and quaternionic Higgs bundles (see \cref{realstable-not-geom}). If $\R X \neq \emptyset$, the decomposition in \cref{decompositionDol} can be refined as follows. Consider a point $(E,\phi) \in \MD^{st,\R}(r,d)$ and fix a real structure $\tau:E \to E$. Taking real points, we obtain a real vector bundle $\R E$ over the real manifold $\R X$, which is a union of $n > 0$ circles. The first Stiefel-Whitney class $w_1(\R E)$ therefore determines an element of $H^1(\R X,\Z/2\Z) \simeq (\Z/2\Z)^n$ and it can be checked that, if $w_1(\R E)=(s_1,\dots,s_n) \in (\Z/2\Z)^n ,$ then $s_1+\ldots+s_n=d \ \text{mod} \ 2$ (see for instance \cite{BHH}). We thus have a decomposition \begin{equation}
\label{decompositionDol1} 
\MD^{st,\R}(r,d) \quad =\bigsqcup_{\substack{s \in (\Z/2\Z)^n \\ s_1+\cdots+s_n=d \ \text{mod} \ 2}} \MD^{st}(r,d,s)
\end{equation}
where $\MD^{st}(r,d,s)$ stands for the space of real Higgs bundles $(E,\phi) \in \MD^{st,\R}(r,d)$ with associated Stiefel-Whitney class $w_1(E_{\R}(\R))=s$.

\begin{remark}
\label{realstable-not-geom}
For a real/quaternionic Higgs bundle $(E,\phi,\tau)$ there is a natural notion of (semi)stability, namely that for each subbundle $F \subseteq E$ which is both $\phi$-invariant and $\tau$-invariant, we have $\mu(F) (\leq) \mu(E)$. Adapting the argument of \cite[Proposition 2.7]{Sch_JSG} to the Higgs setting, we see that a stable real/quaternionic Higgs bundle $(E,\phi,\tau)$ satisfies one of the following two mutually exclusive conditions:
\begin{itemize}
    \item The underlying Higgs bundle $(E,\phi)$ is stable, in which case we will say that $(E,\phi,\tau)$ is \textit{geometrically stable}.
    \item The underlying Higgs bundle $(E,\phi)$ is not stable, in which case there exists a stable Higgs bundle $(\mathcal{E},\Phi)$ such that $(\mathcal{E},\Phi) \not\simeq (\sigma^*(\overline{\mathcal{E}}),\sigma^*(\overline{\Phi}))$ and an isomorphism of Higgs bundles
    \begin{equation}
    \label{notgeomstable}
        (E,\phi) \simeq (\mathcal{E},\Phi) \oplus (\sigma^*(\overline{\mathcal{E}}),\sigma^*(\overline{\Phi})\big)
    \end{equation} with real/quaternionic structure $\tau$ given by exchanging the factors in (\ref{notgeomstable}). We refer to \cite[Proposition 2.7]{Sch_JSG} for the details of the proof. 
\end{itemize}
Note that real/quaternionic Higgs bundles are semistable in the real/quaternionic sense if and only if they are semistable in the complex sense, and that stable real/quaternionic Higgs bundles are always at least polystable in the complex sense.
\end{remark}

If $\gcd(r,d)\neq 1$, then not all real points $(E,\phi) \in \R\MD(r,d)$ admit such a modular interpretation. We can, however, still give the following description of polystable bundles in the real locus, as discussed in \cite[Section 2.5]{Sch_JSG}.

\begin{prop}
\label{polystable-real}
Given a polystable Higgs bundle $(E,\phi)\in \R \MD(r,d)$, there  exist stable real or quaternionic bundles $(E_1,\phi_1,\tau_1),$ $\dots,(E_\ell,\phi_\ell,\tau_\ell)$, all of them of slope $\frac{d}{r}$, and a graded isomorphism of Higgs bundles $(E,\phi) \simeq (E_1,\phi_1) \oplus \cdots \oplus (E_\ell,\phi_\ell)$.    
\end{prop}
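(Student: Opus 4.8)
The plan is to combine the polystable decomposition of $(E,\phi)$ into complex-stable factors with the fact that the Galois action permutes these factors, and then to read off the real or quaternionic structures orbit by orbit. By definition, a polystable Higgs bundle is a direct sum of stable Higgs bundles of the same slope, so grouping isomorphic summands I would write
\[
(E,\phi) \simeq \bigoplus_{i=1}^k (\mathcal{E}_i,\Phi_i)^{\oplus m_i},
\]
where the $(\mathcal{E}_i,\Phi_i)$ are pairwise non-isomorphic stable Higgs bundles, all of slope $d/r$, and $m_i \geq 1$. Since $(E,\phi)$ is a real point, there is an isomorphism $\alpha : (E,\phi) \to (\sigma^*(\ov{E}),\sigma^*(\ov{\phi}))$. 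Applying the Galois conjugation functor to the decomposition and invoking the uniqueness (Krull--Schmidt) of the polystable decomposition, I would conclude that the rule $(\mathcal{E}_i,\Phi_i) \mapsto (\sigma^*(\ov{\mathcal{E}_i}),\sigma^*(\ov{\Phi_i}))$ induces an involution $\iota$ of the index set $\{1,\dots,k\}$ preserving the multiplicities, i.e.\ $m_{\iota(i)} = m_i$.

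Next I would analyse the orbits of $\iota$. If $\iota(i) = i$, the stable Higgs bundle $(\mathcal{E}_i,\Phi_i)$ is isomorphic to its Galois conjugate, so \cref{realHiggsbundlemoduli} equips it with either a real or a quaternionic structure $\tau_i$, producing a \emph{geometrically stable} real/quaternionic Higgs bundle. If $\iota(i) = j \neq i$, I would pair the two conjugate factors and equip
\[
(\mathcal{E}_i,\Phi_i) \oplus (\sigma^*(\ov{\mathcal{E}_i}),\sigma^*(\ov{\Phi_i}))
\]
with the structure given by exchanging the two summands, exactly as in \cref{realstable-not-geom}; this is a stable, though not geometrically stable, real/quaternionic Higgs bundle of slope $d/r$. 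Taking $m_i$ copies of the object attached to each orbit (choosing one representative in each $2$-cycle) and summing over all orbits yields a direct sum of stable real/quaternionic Higgs bundles whose underlying Higgs bundle is, by construction and the identity $m_{\iota(i)} = m_i$, isomorphic to $\bigoplus_i (\mathcal{E}_i,\Phi_i)^{\oplus m_i} \simeq (E,\phi)$, which is the desired graded isomorphism.

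The step I expect to require the most care is the passage from the isomorphism $\alpha$ to the combinatorial data of the involution $\iota$ and the bookkeeping of multiplicities: one must check that $\alpha$ genuinely matches isotypic components with isotypic components, which rests on Schur's lemma for stable Higgs bundles (so that $\End(E,\phi)$ is a product of matrix algebras) and on the uniqueness of the polystable decomposition. A secondary subtlety, should one wish the chosen structures $\tau_i$ to assemble into the ambient real structure on $(E,\phi)$ rather than merely to exist factor by factor, is that the restriction of $\alpha$ to an isotypic block $(\mathcal{E}_i,\Phi_i)^{\oplus m_i}$ induces an antilinear map on the multiplicity space $\C^{m_i}$ whose normal form (real, quaternionic, or hyperbolic, according to the orbit type) governs how the block splits; since the statement only asks for a graded isomorphism of Higgs bundles, however, this refinement is not strictly needed and the multiplicities can be handled formally as above.
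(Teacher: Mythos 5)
Your argument is correct and is essentially the standard one that the paper delegates to \cite[Section 2.5]{Sch_JSG}: decompose the polystable Higgs bundle into isotypic pieces of stable factors, note that Galois conjugation permutes these factors (preserving multiplicities, by uniqueness of the decomposition and Schur's lemma for stable Higgs bundles), and then apply \cref{realHiggsbundlemoduli} on the fixed orbits and the swap construction of \cref{realstable-not-geom} on the $2$-cycles. Your closing remark that only a graded isomorphism of the underlying Higgs bundles is required, so that the finer normal form of $\alpha$ on the multiplicity spaces is not needed, is exactly the right observation.
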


\subsection{Topological type of real points}
\label{description-fibers}
 
Let us define a morphism of real algebraic varieties 
\begin{equation}\label{obstruction_map}
    c : \R\MD(r,d) \to \pi_0\big(\R\Pic_d(X)\big)\, ,
\end{equation} where $\Pic_d(X)$ is the connected component of the Picard group of $X$ parameterizing line bundles of degree $d$ on $X$, by composing the determinant morphism
$$
\det : 
\begin{array}{rcl}
    \R\MD(r,d) & \longrightarrow & \R\Pic_d(X) \\
    (E, \phi) & \longmapsto & \det E
\end{array}
$$
with the canonical continuous map $\R\Pic_d(X) \to \pi_0(\R\Pic_d(X))\,.$ Recall that the finite set $\pi_0(\R \Pic_d(X))$ has been described in \cite{GH}. For every $s \in \pi_0(\R\Pic_d(X))$, we put $\R\MD(r,d,s)\coloneqq c^{-1}(s)$. Since $\pi_0(\R\Pic_d(X))$ is discrete, we have an open-closed decomposition
\begin{equation}
\label{fibers-topologicaltype}
\R\MD(r,d) = \bigsqcup_{s\in\pi_0(\R\Pic_d(X))} \R\MD(r,d,s)\,.
\end{equation}
We will show in \cref{CC_real_locus} that, when $\gcd(r,d) = 1$, these fibres are, in fact, connected, so the above decomposition gives the connected components of the real locus of the Dolbeault moduli space. For the proof, it will be useful to have a modular interpretation of each space $\R\MD(r,d,s)$, which we now give. In particular, we will relate these fibers to the subspaces introduced in \cref{decompositionDol}.

\subsubsection{Case 1}\label{modular_interp_real_locus_with_real_pts_on_curve} 

Assume first that $\R X \neq \emptyset$, with $n > 0$ connected components. Then $\R\Pic_d(X)$ consists of isomorphism classes of real line bundles and it has $2^{n-1}$ connected components (\cite{GH}). These connected components are distinguished by the first Stiefel-Whitney class $s \coloneqq w_1(\R L) \in H^1(\R X; \Z/2\Z) \simeq (\Z / 2\Z)^n$ of a real line bundle $L \in \R Pic_d$, which is subject to the condition $s_1 + \ldots + s_n = d\ \mathrm{mod}\,{2}$. More precisely, the continuous map
$$
\begin{array}{rcl}
    \R\Pic_d(X) & \longrightarrow & (\Z/2\Z)^n  \\
    L & \longmapsto & s = (s_1, \ldots, s_n) \coloneqq w_1(\R L)
\end{array}
$$ induces a bijection 
$
\pi_0(\R\Pic_d(X)\big) 
\simeq 
\{ s \in (\Z/2\Z)^n\ |\ s_1 +\, \ldots\, + s_n = d\ \text{mod}\ 2\} \simeq (\Z/2\Z)^{n-1}.
$ So the morphism $c$ defined in \eqref{obstruction_map} can be viewed in this case as a continuous map
$$
c : 
\begin{array}{rcl}
    \R\MD(r,d) & \longrightarrow & (\Z/2\Z)^n \\
    (E, \phi) & \longmapsto & w_1(\R\det E)
\end{array}
$$
We will now study the intersection of the fibres of $c$ with $\MD^{st}(r,d)$. 

\smallskip

\textbf{Subcase 1.} Let us first assume that $r = 2r' +1$. In particular, since $n > 0$, there are no quaternionic vector bundles of rank $2r'+1$ on $X$. Then, comparing \cref{decompositionDol} and \cref{decompositionDol1}, we have, for all $s \in (\Z/ 2\Z)^n$, $\R\MD^{st}(2r'+1, d,s) = \MD^{st, \R}(2r'+1, d, s)$ and this set is non-empty if and only if $s_1 + \ldots + s_n = d\ \mathrm{mod}\,2$.

\smallskip

\textbf{Subcase 2.} Let us now assume that $r = 2r'$. In this case, there can be quaternionic vector bundles of rank $2r'$, provided the degree $d$ is also even (since the rank and degree of a quaternionic bundle must satisfy $d + r(g-1) \equiv 0 \mod 2$; see for instance \cite{BHH}). In particular, when $n > 0$ and we also assume that $\gcd(r,d) =1$, then there are in fact no quaternionic vector bundles on $X$. In general, though, the situation is the following.

\begin{enumerate}
    \item If $d = 2d'+1$, then, for all $s \in (\Z/ 2\Z)^n$, we have $ \R\MD^{st}(2r',2d'+1,s) = \MD^{st, \R}(2r', 2d'+1,s)$ and the latter is non-empty if and only if $s_1 + \ldots + s_n = 1\ \mathrm{mod}\,2$.
    \item If $d = 2d'$, then the determinant line bundle $L \coloneqq \det(E)$ of a quaternionic vector bundle $(E,\tau_\H)$ is a real line bundle which satisfies $\R L = \R\det E = \R\det(E|_{\R X})$, where $E|_{\R X}$ is the restriction to $\R X$ of the complex vector bundle $E$. Note that $E|_{\R X}$ is complex vector bundle with quaternionic structure over a space with trivial real structure \cite{Atiyah_real_structures}. Consequently, the vector bundle $E|_{\R X}$ admits a reduction of structure group to the symplectic group $\Sp(2r';\C) \subset \SL(2r';\C)$ and this reduction of structure group is compatible with the real structure. So $\det(E|_{\R X})$ is trivial as a complex line bundle with real structure, hence $w_1(\R L) = 0$. This shows that, when $n > 0$, $r = 2r'$ and $d = 2d'$, we have a decomposition $\R\MD^{st}(2r', 2d',0) = \MD^{st,\R}(2r',2d',0) \sqcup \MD^{st, \H}(2r', 2d')$ and, for all $s \neq 0$, $\R\MD^{st}(2r', 2d',s) = \MD^{st, \R}(2r', 2d',s)$ and the latter is non-empty if and only if $s_1 + \ldots + s_n = 0\ \mathrm{mod}\,2$.
\end{enumerate}

We will prove in \cref{CC_real_locus} that, if $n > 0$ and $\gcd(r,d) = 1$, then for all $s \in (\Z/2\Z)^n$ such that $s_1 + \ldots + s_n = d\ \mathrm{mod}\,2$, the fibre $ \R \MD(r,d,s) = c^{-1}(s)$ is non-empty and connected, which will yield the following result.

\begin{theorem}\label{Dol_CC_when_RX_ne}
Assume that $n > 0$ and that $\gcd(r,d) = 1$. Then the real locus of the moduli space of Higgs bundles of rank $r$ and degree $d$ has $2^{n-1}$ connected components. More precisely, we have the following decomposition into connected components
$$\R\MD(r,d) \quad = \bigsqcup_{\substack{s \in (\Z/2\Z)^n \\ s_1+\cdots+s_n=d \ \mathrm{mod} \ 2}} \MD^{st, \R}(r,d,s)$$ 
where $\MD^{st,\R}(r,d,s) = c^{-1}(s)$ can be viewed as the moduli space of geometrically stable real Higgs bundles $(E,\phi,\tau)$ of rank $r$, degree $d$ and such that $w_1(\R E) = s$.
\end{theorem}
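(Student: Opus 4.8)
The plan is to assemble the statement from three earlier inputs: the connectedness of the fibres of $c$ (\cref{main-result-intro}), the modular analysis of these fibres carried out in \cref{modular_interp_real_locus_with_real_pts_on_curve}, and the description of $\pi_0(\R\Pic_d(X))$ due to \cite{GH}. First I would pass to the stable locus: since $\gcd(r,d)=1$, semistability and stability coincide, so $\R\MD(r,d)=\R\MD^{st}(r,d)$, and by \cref{realHiggsbundlemoduli} each real point carries a real or a quaternionic structure, unique up to isomorphism. I would then eliminate the quaternionic part by a parity argument: as $n>0$, a point $x\in\R X$ forces any quaternionic bundle to have even rank, and for $r=2r'$ the constraint $d+r(g-1)\equiv0\pmod 2$ forces $d$ even, incompatible with $\gcd(2r',d)=1$. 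Hence $\MD^{st,\H}(r,d)=\emptyset$ and the decomposition \eqref{decompositionDol} collapses to $\R\MD(r,d)=\MD^{st,\R}(r,d)$.

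Next I would identify $c^{-1}(s)$ with the Stiefel--Whitney stratum $\MD^{st}(r,d,s)$ of \eqref{decompositionDol1}. The key identity is $c(E,\phi)=w_1(\R\det E)=w_1(\det\R E)=w_1(\R E)$, since the first Stiefel--Whitney class of a real vector bundle coincides with that of its determinant line bundle and taking real points commutes with $\det$. Consequently $c^{-1}(s)$ is exactly the moduli space of geometrically stable real Higgs bundles $(E,\phi,\tau)$ with $w_1(\R E)=s$ (the structure $\tau$ being unique by \cref{realHiggsbundlemoduli}), which is the asserted modular interpretation. The analysis of \cref{modular_interp_real_locus_with_real_pts_on_curve} (its Subcases $1$ and $2$) then shows that this stratum is non-empty precisely when $s_1+\cdots+s_n=d\bmod 2$; non-emptiness can also be read off directly, since $c^{-1}(s)$ contains the vector-bundle stratum $\R\mathcal{N}(r,d,s)$ (Higgs field $\phi=0$), which is non-empty by \cite{Sch_JSG}.

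Finally I would count. By \cite{GH}, for $n>0$ the real Picard variety $\R\Pic_d(X)$ has $2^{n-1}$ connected components, indexed by the classes $s\in(\Z/2\Z)^n$ with $s_1+\cdots+s_n=d\bmod 2$. By \cref{main-result-intro} each non-empty fibre $\R\MD(r,d,s)=c^{-1}(s)$ is connected, and by \eqref{fibers-topologicaltype} the fibres are open and closed in $\R\MD(r,d)$; hence they are exactly its connected components. Counting the admissible classes $s$ yields $2^{n-1}$ components and the displayed decomposition.

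The only genuinely substantive ingredient is the connectedness of the individual fibres, i.e.\ \cref{main-result-intro}, established via real Bialynicki--Birula flows in \cref{CC_real_locus}; everything else is bookkeeping. Within the present argument, the step requiring the most care is the parity computation ruling out quaternionic bundles, combined with the clean equality $c=w_1(\R\,\cdot\,)$: together these make the index set of the decomposition agree on the nose with $\pi_0(\R\Pic_d(X))$, which is what turns the a priori open--closed decomposition \eqref{fibers-topologicaltype} into the decomposition into connected components.
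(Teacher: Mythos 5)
Your proposal is correct and follows essentially the same route as the paper: the theorem is assembled from the connectedness of the fibres $c^{-1}(s)$ (deferred to \cref{CC_real_locus}, proved via real Bialynicki--Birula flows), the parity argument of Subcases 1 and 2 in \cref{modular_interp_real_locus_with_real_pts_on_curve} ruling out quaternionic bundles when $n>0$ and $\gcd(r,d)=1$, and the description of $\pi_0(\R\Pic_d(X))$ from \cite{GH}. The only point where you are slightly more explicit than the paper is the identity $w_1(\R\det E)=w_1(\R E)$ identifying the fibres of $c$ with the Stiefel--Whitney strata of \eqref{decompositionDol1}, which the paper uses implicitly.
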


\subsubsection{Case 2}\label{modular_interp_real_locus_no_real_pts_on_curve} Assume now that $\R X = \emptyset$. When $X$ has no real points, real vector bundles must have even degree, while quaternionic vector bundles can now occur in odd rank (subject to the condition $d + r(g-1) \equiv 0\ \mathrm{mod}\,2$). The real locus of the Picard variety $\Pic_d(X)$ is given as follows \cite{GH}:
\begin{itemize}
\item $\R\Pic_{2d'}(X) = \Pic_d(X)^{\R}$ if $g$ is even.
\item $\R\Pic_{2d'}(X) = \Pic_d(X)^{\R} \sqcup \Pic_{2d'}^{\H}$ if $g$ is odd.
\item $\R\Pic_{2d'+1}(X) = \Pic_{2d'+1}^{\H}$ if $g$ is even.
\item $\R\Pic_{2d'+1}(X) = \emptyset$ if $g$ is odd.
\end{itemize}
So, when $\R\Pic_d(X)\neq\emptyset$, the group $\pi_0(\R\Pic_d(X))$ is either trivial or isomorphic to $\Z/2\Z$. We denote its neutral element by $\R$ and, when there is a non-trivial element, we denote it by $\H$. As a consequence, when $n=0$, the decomposition in \cref{decompositionDol} becomes the following:

\begin{enumerate}
    \item If $r = 2r' + 1$, $d = 2d'$ and $g = 2g'$, then $\R\MD^{st}(2r'+1,2d') = \MD^{st,\R}(2r'+1,2d')$ consists of real Higgs bundles only and coincides with the intersection of the fibre $c^{-1}(\R)$ with the stable locus $\MD^{st}(2r'+1,2d')$ (recall that $\pi_0(\R\Pic_{2d'})$ is trivial when $g$ is even, because there are only real line bundles in the real locus of the Picard variety in this case).
    \item If $r = 2r' + 1$, $d = 2d'$ and $g = 2g' + 1$, then $\R\MD^{st}(2r'+1,2d') = \MD^{st,\R}(2r'+1,2d') \sqcup \MD^{st,\H}(2r'+1,2d')$ consists of real and quaternionic Higgs bundles. Moreover, we have $\MD^{st,\R}(2r'+1,2d') = c^{-1}(\R) \cap \MD^{st}(2r'+1,2d')$ and $\MD^{st,\H}(2r'+1,2d') = c^{-1}(\H) \cap \MD^{st}(2r'+1,2d')$.
    \item If $r = 2r' + 1$, $d = 2d' + 1$ and $g = 2g'$, then $\R\MD^{st}(2r'+1,2d'+1) = \MD^{st,\H}(2r'+1,2d'+1)$ consists of quaternionic Higgs bundles only and coincides with the intersection of the fibre $c^{-1}(\R)$ with the stable locus of the Dolbeault moduli space (recall that $\pi_0(\R\Pic_{2d'+1})$ is trivial when $g$ is even, because there are only quaternionic line bundles in the real locus of the Picard variety in this case; in particular, even though there are only quaternionic bundles in $\R\MD^{st}(2r'+1,2d'+1)$, there is no fibre $c^{-1}(\H)$ here).
    \item If $r = 2r' + 1$, $d = 2d' + 1$ and $g = 2g' + 1$, then $\R\MD^{st}(2r'+1,2d'+1) = \emptyset$.
    \item If $r = 2r'$ and $d = 2d' + 1$, then, for all $g\geq 1$, $\R\MD^{st}(2r',2d'+1) = \emptyset$.
    \item If $r = 2r'$ and $d = 2d'$, then, regardless of the parity of $g$, $\R\MD^{st}(2r',2d') = \MD^{st,\R}(2r',2d') \sqcup \MD^{st,\H}(2r',2d')$ consists of real and quaternionic Higgs bundles. In this case, the group $\pi_0(\R\Pic_{2d'})$ is trivial, regardless of the parity of $g$, so the above is a decomposition of $c^{-1}(\R) \cap \MD^{st}(2r',2d')$. Note that this case does not occur when $\gcd(r,d) = 1$.
\end{enumerate}

We will prove in \cref{CC_real_locus} that, if $n = 0$ and $\gcd(r,d) = 1$, then for all $s\in\pi_0(\R\Pic_d(X))$, the fibre $\R \MD(r,d,s) \coloneqq c^{-1}(s)$ is connected, which will yield the following result.

\begin{theorem}
\label{Dol_CC_when_RX_empty}
Assume that $n=0$ and that $\gcd(r,d) = 1$. Then, the real locus of the moduli space of Higgs bundles of rank $r$ and degree $d$ admits the following decomposition into connected components:
\begin{enumerate}
    \item If $r = 2r' + 1$, $d = 2d'$ and $g = 2g'$, then $\R\MD(2r' + 1, 2d') = \MD^{\R,st}(2r' + 1, 2d')$. 
    \item If $r = 2r' + 1$, $d = 2d'$ and $g = 2g' + 1$, then $\R\MD(2r' + 1, 2d') = \MD^{\R,st}(2r' + 1, 2d') \sqcup \MD^{\H,st}(2r' + 1, 2d')$. 
    \item If $r = 2r' + 1$, $d = 2d' + 1$ and $g = 2g'$, then $\R\MD(2r'+1 , 2d'+1) = \MD^{\H,st}(2r' + 1, 2d')$. 
    \item If $r = 2r' + 1$, $d = 2d' + 1$ and $g = 2g' + 1$, then $\R\MD(2r'+1 , 2d'+1) = \emptyset$.
    \item If $r = 2r'$ and $d = 2d'+1$, then, for all $g\geq 1$, $\R\MD(2r,2d'+1) = \emptyset$.
\end{enumerate}
\end{theorem}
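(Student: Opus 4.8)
The plan is to read off the enumeration from the connectedness statement \cref{main-result-intro} together with the description of $\pi_0(\R\Pic_d(X))$ recalled in \cref{modular_interp_real_locus_no_real_pts_on_curve}. First I would use that $\gcd(r,d)=1$ forces every semistable Higgs bundle of rank $r$ and degree $d$ to be stable, so that $\R\MD(r,d)=\R\MD^{st}(r,d)$ and the open-closed decomposition \eqref{fibers-topologicaltype} becomes $\R\MD(r,d)=\bigsqcup_{s}c^{-1}(s)$, with $s$ ranging over $\pi_0(\R\Pic_d(X))$. By \cref{main-result-intro} each nonempty fibre $c^{-1}(s)=\R\MD(r,d,s)$ is connected, and since the fibres are simultaneously open and closed they are exactly the connected components of $\R\MD(r,d)$. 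Hence the theorem reduces to determining which fibres are nonempty and identifying them with the pieces $\MD^{st,\R}$ and $\MD^{st,\H}$ of \eqref{decompositionDol}.

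The list of fibres is provided by \cref{modular_interp_real_locus_no_real_pts_on_curve}: when $\R X=\emptyset$, the set $\pi_0(\R\Pic_d(X))$ is a single point in the settings of items (1) and (3), the two-element set $\{\R,\H\}$ in item (2), and empty in items (4) and (5). For the nonemptiness of the fibres I would descend to the vector-bundle locus. The Galois-equivariant inclusion $\mathcal{N}(r,d)\hookrightarrow\MD(r,d)$, $E\mapsto(E,0)$, commutes with $\det$ and hence restricts to inclusions $\R\mathcal{N}(r,d,s)\hookrightarrow\R\MD(r,d,s)$. By the existence results for stable real and quaternionic vector bundles in \cite[Theorem 3.8]{Sch_JSG}, the spaces $\R\mathcal{N}(r,d,s)$ are nonempty exactly when the relevant parity constraints hold ($d$ even for real bundles, and $d+r(g-1)\equiv 0\bmod 2$ for quaternionic bundles); a direct check shows these are met precisely in items (1), (2) and (3), which are also the items where \cref{modular_interp_real_locus_no_real_pts_on_curve} predicts a nonempty piece. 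Thus the nonempty fibres of $c$ are exactly those.

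It then remains to match each nonempty fibre with its modular interpretation, which is exactly the content of \cref{modular_interp_real_locus_no_real_pts_on_curve}: in item (1) the single fibre is $\MD^{st,\R}(2r'+1,2d')$; in item (2) the two fibres are $c^{-1}(\R)=\MD^{st,\R}(2r'+1,2d')$ and $c^{-1}(\H)=\MD^{st,\H}(2r'+1,2d')$; in item (3) the single fibre is $\MD^{st,\H}(2r'+1,2d'+1)$, consisting of quaternionic Higgs bundles even though its label in $\pi_0(\R\Pic_{2d'+1}(X))$ is the neutral element; and in items (4) and (5) one gets $\R\MD(r,d)=\emptyset$. Since $\gcd(r,d)=1$ rules out $r$ and $d$ both even, the remaining subcase $r=2r'$, $d=2d'$ of \cref{modular_interp_real_locus_no_real_pts_on_curve} never occurs, so these five items are exhaustive.

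The analytic substance of the argument is entirely contained in \cref{main-result-intro}; the present statement is a bookkeeping consequence of it. Accordingly, the only real subtlety I anticipate is keeping the parity conditions and the real/quaternionic labelling consistent --- most notably remembering that the unique component of $\R\Pic_{2d'+1}(X)$ in item (3) carries quaternionic line bundles despite being the neutral element, and that the two vanishing statements arise from genuinely different mechanisms: emptiness of $\R\Pic_d(X)$ in item (4) (when $g$ is odd), versus the obstruction $d+r(g-1)\equiv 0\bmod 2$ to the existence of quaternionic bundles in item (5), which forces $\R\MD(2r',2d'+1)=\emptyset$ even when $\R\Pic_{2d'+1}(X)$ is itself nonempty.
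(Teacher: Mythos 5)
Your proposal is correct and follows essentially the same route as the paper: the theorem is deduced as a bookkeeping consequence of the connectedness of the fibres $c^{-1}(s)$ (the paper's \cref{CC_real_locus}, i.e.\ \cref{main-result-intro}) together with the enumeration of $\pi_0(\R\Pic_d(X))$ and the modular identification of the fibres carried out in \cref{modular_interp_real_locus_no_real_pts_on_curve}. The only slip is your early claim that $\pi_0(\R\Pic_d(X))$ is empty in item (5) --- for $g$ even it is a single point --- but you correctly identify the actual mechanism there (the parity obstruction $d+r(g-1)\equiv 0\bmod 2$ killing both real and quaternionic bundles of even rank and odd degree) in your final paragraph, so the argument stands as written.
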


\begin{remark}
\label{decompositon-openclosed-picard}
Notice that the description of the spaces $\R\MD(r,d,s)$ given in \cref{Dol_CC_when_RX_ne,Dol_CC_when_RX_empty} implies that, when $\gcd(r,d)=1$, the decompositions (\ref{decompositionDol}),(\ref{decompositionDol1}) are open-closed since they correspond to the decomposition in fibers (\ref{fibers-topologicaltype}).
\end{remark}

\section{Real Bialynicki-Birula flows}
\label{sectionHodgebundles}

\subsection{Real and quaternionic Hodge bundles}

Recall from \cite{Simpson_LocalSystems} that a Higgs bundle $(E,\phi)$ admits the structure of a system of Hodge bundles if there exists a decomposition into locally free $O_X$-modules $E=\bigoplus_{p,q \in \mathbb{N}}E^{p,q}$ such that $\phi(E^{p,q}) \subseteq E^{p-1,q+1} \otimes \Omega^1_X$. They were used by Simpson to give the following description of the fixed points of the $\C^*$-action on $\MD(r,d)$.

\begin{theorem}\cite[Lemma 4.1]{Simpson_LocalSystems}
\label{fixedpointsHodge}
Let $(E, \phi) \in \MD(r,d)$ be a polystable Higgs bundle. Then the following conditions are equivalent:
\begin{enumerate}
    \item For all $t \in \C^*$, $t\cdot(E,\phi) \simeq (E,\phi)$.
    \item There exists $t\in \C^*\setminus\{\text{roots of unity}\}$, such that $t\cdot (E,\phi)\simeq (E,\phi)$.
    \item $(E,\phi)$ is isomorphic to a system of Hodge bundles.
\end{enumerate}
If $(E,\phi)$ is stable, the system of Hodge bundles to which it is isomorphic is unique up to permutation of the indices.
\end{theorem}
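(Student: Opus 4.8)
The plan is to prove the cycle of implications $(3)\Rightarrow(1)\Rightarrow(2)\Rightarrow(3)$, the only substantial step being $(2)\Rightarrow(3)$. The implication $(1)\Rightarrow(2)$ is immediate, since $\C^*$ contains elements that are not roots of unity. For $(3)\Rightarrow(1)$, I would use the grading $E=\bigoplus_{p,q}E^{p,q}$ directly: for each $t\in\C^*$, define a holomorphic bundle automorphism $g_t$ acting as $t^{-p}\mathrm{Id}$ on $E^{p,q}$. A one-line computation using $\phi(E^{p,q})\subseteq E^{p-1,q+1}\otimes\Omega^1_X$ shows that $g_t\circ\phi\circ g_t^{-1}=t\,\phi$, so $g_t$ is an isomorphism of Higgs bundles $(E,\phi)\simeq(E,t\phi)=t\cdot(E,\phi)$, giving $(1)$.

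For $(2)\Rightarrow(3)$, fix $t_0\in\C^*$ not a root of unity with $t_0\cdot(E,\phi)\simeq(E,\phi)$ in $\MD(r,d)$. Since $(E,\phi)$ is polystable and scaling the Higgs field preserves polystability, this $S$-equivalence is an honest isomorphism of Higgs bundles, so there is a holomorphic bundle automorphism $g\in\Aut(E)$ with $g\circ\phi=t_0\,\phi\circ g$, i.e.\ $g\phi g^{-1}=t_0\phi$. The key point is that $g$ splits $E$ into holomorphic eigenbundles: the characteristic polynomial of $g$ has coefficients in $H^0(X,\mathcal{O}_X)=\C$, hence is constant, so its roots are finitely many constant eigenvalues $\lambda\in\C^*$, and the generalized eigenprojectors are fixed polynomials in $g$, hence holomorphic. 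This yields a decomposition $E=\bigoplus_\lambda E_\lambda$ into subbundles. Iterating the relation $(g-t_0\lambda)\phi=t_0\,\phi(g-\lambda)$ shows that $\phi(E_\lambda)\subseteq E_{t_0\lambda}\otimes\Omega^1_X$, so $\phi$ shifts the eigenvalue by the factor $t_0$.

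The crux --- and the step I expect to be the main obstacle to state cleanly --- is turning this eigenvalue bookkeeping into an honest $\Z$-grading. Because $t_0$ is not a root of unity, the map $\lambda\mapsto t_0\lambda$ has no cycles on the finite set of eigenvalues, so that set decomposes into finite chains $\lambda,t_0\lambda,t_0^2\lambda,\dots$; assigning to the members of each chain consecutive, downward integer indices and setting $E^p\coloneqq\bigoplus_{\mathrm{ind}(\lambda)=p}E_\lambda$ produces a grading with $\phi(E^p)\subseteq E^{p-1}\otimes\Omega^1_X$. This is exactly a system of Hodge bundles (after relabelling $p$ as a pair $(p,-p)$), giving $(3)$. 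It is precisely here that the non-root-of-unity hypothesis is essential: a root of unity would create cycles and only a $\Z/k$-grading, which does not define a Hodge structure.

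Finally, for the uniqueness statement when $(E,\phi)$ is stable, I would encode a Hodge grading as an algebraic cocharacter $\rho\colon\C^*\to\Aut(E)$, $\rho(t)|_{E^p}=t^{-p}\mathrm{Id}$, characterised by $\rho(t)\phi\rho(t)^{-1}=t\phi$. Given two gradings with cocharacters $\rho_1,\rho_2$, the automorphism $h(t)\coloneqq\rho_2(t)^{-1}\rho_1(t)$ commutes with $\phi$, so lies in $\End((E,\phi))=\{\lambda\mathrm{Id}_E\}$ by stability; thus $h(t)=t^m\mathrm{Id}$ for some $m\in\Z$, which forces $E^p_1=E^{p+m}_2$ for all $p$. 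Hence the two systems of Hodge bundles coincide up to a shift of indices, as claimed.
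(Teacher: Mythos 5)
Your proof is correct and follows exactly the standard Simpson argument: the paper itself does not reprove this statement (it is quoted from \cite[Lemma~4.1]{Simpson_LocalSystems}), but the eigenbundle decomposition of the intertwining automorphism $g$ with constant characteristic polynomial, the relation $\phi(E_\lambda)\subseteq E_{t_0\lambda}\otimes\Omega^1_X$, and the organisation of eigenvalues into chains (using that $t_0$ is not a root of unity) are precisely the steps the paper reproduces in its real analogue, \cref{realHodgebundletheorem}. Your cocharacter argument for uniqueness in the stable case is also the standard one and is correct.
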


This description of the $\C^*$-fixed points is key to Simpson's proof of the connectedness of variety $\MD(r,d)$. In the present section, we adapt his methods to describe the connected components of the variety $\R\MD(r,d)$ if $\gcd(r,d) = 1$ (i.e.\ in the smooth case). But first we recall the following fact.
\begin{remark}
\label{stable-factors}
If $(E,\phi)\in \MD(r,d)^{\C^*}$ is a $\C^*$-fixed polystable Higgs bundle, we can write \begin{equation}\label{decomposition-polystable}(E,\phi) =(E_1,\phi_1)\oplus \cdots \oplus (E_k,\phi_k)\end{equation} with each $(E_i,\phi_i) \in \MD^{st}(r_i,d_i)$, then, each $(E_i,\phi_i)$ is necessarily $\C^*$-fixed. Indeed, given the uniqueness of the decomposition \eqref{decomposition-polystable} up to permutation of the factors, we have a homomorphism $\gamma:\C^* \to \mathfrak{S}_k$ such that $t \cdot(E_i,\phi_i)=(E_{\gamma(t)},\phi_{\gamma(t)})$ and such a homomorphism from $\C^*$ to the symmetric group $\mathfrak{S}_k$ must be trivial, since $\C^*$ does not have any finite index subgroup.
\end{remark}
The $\C^*$-action and the real structure $\sigma$ of $\MD(r,d)$ satisfy the following compatibility relation
\begin{equation}\label{anticommutative}
\sigma \big(t \cdot (E,\phi)\big)=\ov{t} \cdot \sigma\big((E,\phi)\big).
\end{equation} 
The $\C^*$-action on $\MD(r,d)$ therefore restricts to an $\R^*$-action on $\R \MD(r,d)$. The fixed-point set of this $\R^*$-action is described by the following result, which says that if a real point of $\MD(r,d)$ is fixed by $\R^*$, it is in fact fixed by the whole of $\C^*$.

\begin{prop}\label{Rstar_fixed_equiv_Cstar_fixed}
We have $(\R\MD(r,d))^{\R^*}=\R\MD(r,d) \cap \MD(r,d)^{\C^*}$.
\end{prop}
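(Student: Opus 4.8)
The plan is to prove the two inclusions separately. The inclusion $\R\MD(r,d) \cap \MD(r,d)^{\C^*} \subseteq (\R\MD(r,d))^{\R^*}$ is immediate: if a real point is fixed by the entire group $\C^*$, then it is in particular fixed by the subgroup $\R^* \subset \C^*$, and it already lies in $\R\MD(r,d)$ by hypothesis. So no work is needed here.

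For the reverse inclusion, I would start from a point $(E,\phi) \in (\R\MD(r,d))^{\R^*}$ and aim to upgrade its $\R^*$-invariance to full $\C^*$-invariance. The key preliminary observation is that a point of $\MD(r,d)$ is an $S$-equivalence class, which has a canonical polystable representative, and that the $\C^*$-action $t\cdot(E,\phi) = (E,t\phi)$ preserves polystability, since the $\phi$-invariant subbundles, and hence the slope inequalities defining (poly)stability, are unchanged by rescaling $\phi$. So I may assume that $(E,\phi)$ is polystable and bring \cref{fixedpointsHodge} to bear.

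The crucial step is then to note that $\R^*$ contains elements that are not roots of unity, for instance $t = 2$. Since $(E,\phi)$ is $\R^*$-fixed, we have in particular $2\cdot(E,\phi) \simeq (E,\phi)$, so condition (2) of \cref{fixedpointsHodge} is satisfied. The implication (2) $\Rightarrow$ (1) of that theorem then yields $t\cdot(E,\phi) \simeq (E,\phi)$ for all $t \in \C^*$, i.e.\ $(E,\phi) \in \MD(r,d)^{\C^*}$. Combined with the hypothesis $(E,\phi) \in \R\MD(r,d)$, this gives the desired inclusion, and hence the claimed equality.

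I expect this argument to be quite short, and the only point requiring genuine care is the reduction to a polystable representative, so that Simpson's criterion in \cref{fixedpointsHodge} applies to the point at hand; once that is in place, the heavy lifting is done entirely by \cref{fixedpointsHodge}, with any fixed real non-root-of-unity scalar (say $t=2$) serving as the bridge between the $\R^*$-fixed and $\C^*$-fixed conditions. The compatibility relation \eqref{anticommutative} is what guarantees at the outset that the $\C^*$-action restricts to a genuine $\R^*$-action on $\R\MD(r,d)$, so that the statement is well-posed.
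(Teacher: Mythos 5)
Your argument is correct and is essentially the paper's own proof: both note that the inclusion $\R\MD(r,d)\cap\MD(r,d)^{\C^*}\subseteq(\R\MD(r,d))^{\R^*}$ is trivial and deduce the reverse inclusion from \cref{fixedpointsHodge}, using that $\R^*$ contains elements that are not roots of unity. Your added remark about passing to the polystable representative is a reasonable elaboration that the paper leaves implicit.
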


\begin{proof}
We need only show that $\R \MD(r,d)^{\R^*} \subseteq \R \MD(r,d) \cap \MD(r,d)^{\C^*}$, which follows from \cref{fixedpointsHodge} since not all elements of $\R^*$ are roots of unity in $\C^*$.
\end{proof}
As the definition of the space $\R\MD(r,d,s) \coloneqq c^{-1}(s)$ given in \cref{description-fibers} does not involve the Higgs field of a Higgs bundle $(E, \phi)$, the $\R^*$-action on $\R\MD(r,d)$ preserves the open-closed decomposition (\ref{fibers-topologicaltype}). More precisely, we have the following result.
\begin{prop}\label{limit-proposition}
For all $s \in \R\Pic_d(X)$, the BB flow $f: \C \to \MD(r,d)$ of a Galois-invariant Higgs bundle $(E,\phi) \in \R\MD(r,d,s)$ is a real BB flow that satisfies $f(\R) \subset \R\MD(r,d,s)$. In particular, the Higgs bundle $(E_0,\phi_0) \coloneqq f(0) $ satisfies $(E_0,\phi_0) \in \R\MD(r,d,s)$.
\end{prop}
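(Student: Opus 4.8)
The plan is to split the statement into two independent assertions and prove each with a short argument: (i) the flow $f$ is Galois-equivariant, so that its restriction $\R f = f|_{\R}$ takes values in $\R\MD(r,d)$; and (ii) the composite $c \circ f$ is constant along the whole flow, so that $f(\R)$ stays inside the single fibre $\R\MD(r,d,s)$. The only delicate point is the behaviour at the limit $t = 0$, where orbit-wise identities no longer apply and one must argue by continuity instead.

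First I would check that $f$ is a real BB flow, i.e.\ that $f^\si = f$ for $f^\si(t) \coloneqq \si(f(\ov t))$. Using the compatibility relation \eqref{anticommutative} and the hypothesis $\si(E,\phi) = (E,\phi)$, one computes, for $t \in \C^*$,
$$
f^\si(t) = \si\big(f(\ov t)\big) = \si\big(\ov t \cdot (E,\phi)\big) = t \cdot \si(E,\phi) = t \cdot (E,\phi) = f(t).
$$
Since $f$ extends to a morphism $\C \to \MD(r,d)$ (by properness of $h$) and $t \mapsto \si(f(\ov t))$ is likewise continuous, the identity $f^\si = f$, valid on the dense subset $\C^*$, extends to all of $\C$. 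Specialising to $t \in \R$ gives $\si(f(t)) = f(t)$, hence $f(\R) \subseteq \R\MD(r,d)$, and at $t = 0$ this yields $(E_0,\phi_0) = f(0) \in \R\MD(r,d)$.

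Next I would show that $\det \circ f : \C \to \Pic_d(X)$ is constant with value $\det E$. The point is that the $\C^*$-action only rescales the Higgs field, so the underlying bundle of $f(t) = (E, t\phi)$ is literally $E$ for every $t \in \C^*$; thus $\det(f(t)) = \det E$ on $\C^*$. As $\det$ is continuous and $\C^*$ is dense in $\C$ with $\Pic_d(X)$ Hausdorff, the map $\det \circ f$ equals $\det E$ on all of $\C$, and in particular $\det E_0 = \det E$. Combining with step (i), for every $t \in \R$ the point $f(t)$ is real and
$$
c\big(f(t)\big) = p\big(\det f(t)\big) = p(\det E) = c(E,\phi) = s,
$$
so $f(\R) \subseteq c^{-1}(s) = \R\MD(r,d,s)$ and, in particular, $(E_0,\phi_0) \in \R\MD(r,d,s)$. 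I expect the main (indeed only) obstacle to be the passage to $t = 0$: because the limiting bundle $E_0$ need not be isomorphic to $E$, the equality $\det E_0 = \det E$ cannot be read off the orbit and genuinely requires the continuity argument above, resting on the extension of $f$ to a morphism on $\C$.
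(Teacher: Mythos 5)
Your proof is correct. The first half (Galois-equivariance of the flow, extended from $\C^*$ to $\C$ by continuity, giving $f(\R)\subseteq\R\MD(r,d)$) is exactly the paper's argument. For the second half you diverge: the paper does not track the determinant at all, but simply observes that $\R$ is connected, that $\R\MD(r,d,s)$ is open and closed in $\R\MD(r,d)$ by the decomposition \eqref{fibers-topologicaltype} (discreteness of $\pi_0(\R\Pic_d(X))$), and that $f(1)=(E,\phi)$ lies in the fibre over $s$, so the whole of $f(\R)$ does. You instead prove the stronger statement that $\det\circ f:\C\to\Pic_d(X)$ is constant, by noting it equals $\det E$ on the dense subset $\C^*$ and extending by continuity/separatedness; this is valid (and indeed gives $\det E_0=\det E$ on the nose, not just that $E_0$ lands in the same component of $\R\Pic_d(X)$). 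The trade-off is that the paper's connectedness-plus-open-closedness argument is the one that recycles verbatim for the complete flows $\R\mathbf{P}^1\to\R\MD(r,d,s)$ appearing later (in the proofs of \cref{downwardflowsfixed} and \cref{flowrealpart-ss}), whereas your determinant computation is more self-contained but specific to this statement. Both routes are sound.
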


\begin{proof}
By the compatibility relation of \cref{anticommutative} and the continuity of $\sigma$ and $f$, we have for all $t\in\C$, $\sigma(f(t)) = \si(t \cdot (E,\phi)) = \ov{t} \cdot \sigma (E,\phi) = \ov{t}  \cdot (E,\phi) = f(\ov{t})$. So $f$ is a real BB flow, which implies that $f(\R) \subset \R\MD(r,d)$. Since $\R$ is connected and $\R\MD(r,d,s)$ is open-closed by \cref{fibers-topologicaltype}, the fact that $f(1) = (E,\phi) \in \R\MD(r,d,s)$ implies that $f(\R) \subset \MD(r,d,s)$. In particular, $f(0) \in \R\MD(r,d,s)$, too.
\end{proof}

\begin{definition}\label{realHodgebundledefi}
Say that a real (respectively, quaternionic) Higgs bundle $((E,\phi),\tau)$ admits the structure of a real (respectively, quaternionic) system of Hodge bundles if the Higgs bundle $(E,\phi)$ admits the structure of a system of Hodge bundles such that, for all $p$ and $q$, we have $\tau(E^{p,q})=E^{p,q}$ and $\phi: E^{p,q} \to E^{p-1,q+1}\otimes \Omega^1_X$ is a morphism of real/quaternionic bundles.  
\end{definition}

We then have the following description of $\R^*$-fixed points in $\R\MD^{st}(r,d)$. Our proof follows \cite[Chapter 11]{Simpson_IHES_2}, adapting Simpson's argument to the real case.

\begin{lem}
\label{realHodgebundletheorem}
Let $(E,\phi,\tau)$ be a real (resp. quaternionic) Higgs bundle and $s\in \R^*\setminus\{1,-1\}$ such that $(E,\phi,\tau) \simeq (E,s \phi,\tau)$, then $(E,\phi)$ admits the structure of a real (resp. quaternionic) system of Hodge bundles.
\end{lem}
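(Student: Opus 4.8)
The plan is to follow Simpson's argument for \cref{fixedpointsHodge}, keeping track throughout of the anti-holomorphic involution $\tau$. First I would unravel the hypothesis: an isomorphism of real (resp. quaternionic) Higgs bundles $(E,\phi,\tau)\simeq(E,s\phi,\tau)$ amounts to a holomorphic automorphism $f\in\Aut(E)$ satisfying simultaneously the Higgs relation $(f\otimes\mathrm{Id}_{\Omega^1_X})\circ\phi=s\,\phi\circ f$ and the reality relation $f\circ\tau=\tau\circ f$. Since $X$ is compact and connected, the coefficients of the characteristic polynomial of $f$ are global holomorphic functions, hence constant, so $f$ has finitely many constant eigenvalues and $E$ splits holomorphically into generalized eigenbundles $E=\bigoplus_\lambda E^{(\lambda)}$, each of constant rank. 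As $f$ is invertible, $0$ is not an eigenvalue.

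Next I would record how $\phi$ and $\tau$ move the eigenbundles. From the base identity $\big((f\otimes\mathrm{Id})-s\lambda\big)\circ\phi=s\,\phi\circ(f-\lambda)$ one gets by induction $\big((f\otimes\mathrm{Id})-s\lambda\big)^k\circ\phi=s^k\,\phi\circ(f-\lambda)^k$, whence $\phi\big(E^{(\lambda)}\big)\subseteq E^{(s\lambda)}\otimes\Omega^1_X$, i.e.\ $\phi$ multiplies the eigenvalue by $s$; and since $\tau$ is antilinear and commutes with $f$, one gets $\tau\big(E^{(\lambda)}\big)=E^{(\ov{\lambda})}$. The problem is thus to produce a $\Z$-grading $g$ on the finite set of eigenvalues with $g(s\lambda)=g(\lambda)-1$ — which makes $\phi$ lower the grading — and, crucially, with $g(\ov{\lambda})=g(\lambda)$ — which makes the grading $\tau$-invariant. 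Grouping the eigenvalues into orbits under multiplication by $s$, the relation $g(s\lambda)=g(\lambda)-1$ determines $g$ on each orbit up to an additive constant, so it remains to fix these constants compatibly with complex conjugation.

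This last point is where the real hypothesis $s\in\R^*\setminus\{1,-1\}$ enters and is the main obstacle. Because $s$ is real, conjugation commutes with multiplication by $s$ and hence permutes the $s$-orbits, so I would choose orbit representatives equivariantly for this involution. The key fact is that $|s|\neq1$: along an orbit $\{s^p\lambda_0\}$ the moduli $|s|^p|\lambda_0|$ are strictly monotonic, so the eigenvalues in one orbit are distinct (the grading is well defined) and a self-conjugate orbit must be real — if $\ov{\lambda_0}=s^k\lambda_0$, then taking moduli forces $|s|^k=1$, hence $k=0$ and $\lambda_0\in\R^*$. Consequently, setting $g(\ov{\lambda_0})=g(\lambda_0)$ on representatives is consistent: on a pair of distinct conjugate orbits it propagates to $g(\ov{\mu})=g(\mu)$ via $\ov{s^p\lambda_0}=s^p\ov{\lambda_0}$, while on a self-conjugate (hence real) orbit $\tau$ already preserves each $E^{(\lambda)}$. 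Putting $E_p\coloneqq\bigoplus_{g(\lambda)=p}E^{(\lambda)}$ then yields a decomposition with $\phi(E_p)\subseteq E_{p-1}\otimes\Omega^1_X$ and $\tau(E_p)=E_p$; reindexing by a bidegree $(p,q)$ and restricting the real Higgs relation \eqref{diagramreal} to the $\tau$-invariant pieces exhibits $(E,\phi)$ as a real (resp. quaternionic) system of Hodge bundles in the sense of \cref{realHodgebundledefi}. The quaternionic case is handled identically, the only change being that $\tau^2=-\mathrm{Id}_E$ restricts to a quaternionic structure on each real eigenbundle.
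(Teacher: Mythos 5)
Your proposal is correct and follows essentially the same route as the paper: decompose $E$ into generalized eigenbundles of the intertwining automorphism $f$ (whose characteristic polynomial has constant, real coefficients since $f$ commutes with the antilinear $\tau$), observe that $\phi$ sends the $\lambda$-eigenbundle into the $s\lambda$-eigenbundle while $\tau$ sends it to the $\ov{\lambda}$-eigenbundle, and use $|s|\neq 1$ to rule out cyclic $s$-orbits and to force self-conjugate orbits to consist of real eigenvalues, so that the resulting grading can be chosen conjugation-invariant. Your reformulation in terms of an equivariant $\Z$-grading on the eigenvalue set is just a more abstract packaging of the paper's explicit chains $I_j$ and the definition of $E^{p,q}$ in \eqref{realSystemOfHodgeBundlesDef}.
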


\begin{proof}
Let $f:(E,\phi,\tau) \to (E,s\phi,\tau)$ be an isomorphism of real/ quaternionic Higgs bundles. Then, in particular,
\begin{equation}\label{commutation-action}
    (f \otimes \mathrm{Id}_{\Omega^1_X}) \phi = s\phi f.
\end{equation} 
The coefficients of the characteristic polynomial $p_{f}(t)$ of $f$ are holomorphic functions over $X$ and are therefore constant. Let $r \coloneqq \mathrm{rk}(E)$ and denote by $E(f)$ the set of roots of $p_f(t)$. For all $\lambda \in \C$, let 
\begin{equation}\label{char_subbdle}
    E_{\lambda} \coloneqq \ker(f-\lambda \mathrm{Id}_E)^{r}
\end{equation} 
be the characteristic sub-bundle of $f$ associated to $\lambda$. Notice that $E_{\lambda} \neq 0$ if and only if $\lambda \in E(f)$. There is therefore a decomposition $E=\oplus_{\lambda \in E(f)}E_{\lambda}$ where $E_{\lambda}=\ker(f-\lambda \mathrm{Id}_E)^{r}$ is the characteristic sub-bundle of $f$ associated to the eigenvalue $\lambda$. We deduce from Equation (\ref{commutation-action}) that
$$
\big((f - s\lambda \mathrm{Id}_E) \otimes \mathrm{Id}_{\Omega^1_X}) \phi 
= (f \otimes \mathrm{Id}_{\Omega^1_X})\phi - s \lambda \phi
= s\phi f - s \lambda \phi
= s\phi(f-\lambda \mathrm{Id}_E)
$$
and by induction that, for all $m\in \mathbb{N}$,
$((f- s\lambda\mathrm{Id}_E)^m \otimes \mathrm{Id}_{\Omega^1_X}) \phi = s^m \phi(f-\lambda \mathrm{Id}_E)^m$. In particular, we have $\phi(\ker(f-\lambda \mathrm{Id}_E)^r) \subseteq \ker(f-s \lambda \mathrm{Id}_E)^r \otimes \Omega_X^1$. So either $s \lambda \not \in E(f)$ and $\phi(E_{\lambda})=0$ or $s \lambda \in E(f)$ and $\phi(E_{\lambda}) \subseteq E_{s \lambda} \otimes \Omega^1_X$. Now, since $f \circ \tau = \tau \circ f$ and $\tau$ is fibrewise $\C$-antilinear, we have $p_f(t) \in \R[t]$ and, in particular, if $\lambda \in E(f)$, then $\ov{\lambda} \in E(f)$ too. Moreover, $\tau(E_{\lambda})=E_{\ov{\lambda}}$. Since $s \in \R^*\setminus\{1,-1\}$, we have, for all $j>0$, $s^j \lambda \neq \overline{\lambda}$. Therefore, there exist distinct complex numbers $\lambda_1,\dots,\lambda_h$ and natural numbers $n_1,\ \ldots\ ,n_h \in \mathbb{N}$ such that the set of eigenvalues of $f$ can be written as $E(f)=I_1 \sqcup \ldots \sqcup I_k$
where by $I_j$ we mean the set
$I_{j} \coloneqq \{\lambda_j,s\lambda_j\,,\ldots\,,s^{n_j}\lambda_j,\overline{\lambda}_j,s\lambda_j,\ \ldots\ ,s^{n_j}\overline{\lambda_j}\}$
with the property that, for all $j$, $s^{-1}\lambda_j \not\in E(f)$ and $s^{n_j+1}\lambda_j \not \in E(f)$.  Moreover, for all $q \geq 0$, we have 
$\phi(E_{s^q\lambda_j}) \subseteq E_{s^{q+1}\lambda_j} \otimes \Omega^1_X$. Define then $E^{p,q}$ in the following way
\begin{equation}
\label{realSystemOfHodgeBundlesDef}
E^{p,q}
=
\left\{
\begin{array}{ll}
E_{s^q\lambda_{p+q+1}} & \text{if}\ \lambda_{p+q+1} \in \R, \\ E_{s^q\lambda_{p+q+1}} \oplus E_{s^q \overline{\lambda_{p+q+1}}} &  \text{if}\ \lambda_{p+q+1} \in \C \setminus \R.
\end{array}
\right.
\end{equation}
We see that $\tau(E^{p,q})=E^{p,q}$ for all $p,q$, so $(E,\phi)$ indeed admits the structure of a system of real/quaternionic Hodge bundles in the sense of \cref{realHodgebundledefi}.
\end{proof}

As an application of \cref{realHodgebundletheorem}, we get the following result.

\begin{corollary}
Let $(E,\phi)\in \R\MD^{st}(r,d)^{\R^*}$ and let $\tau$ be the real/quaternionic structure of $\tau$ given by \cref{realHiggsbundlemoduli}. Then $(E,\phi,\tau)$ admits the structure of a real/quat\-ernionic system of Hodge bundles. Moreover, this structure is unique up to permutation of the indices.
\end{corollary}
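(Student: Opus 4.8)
The plan is to reduce the statement to \cref{realHodgebundletheorem} by exhibiting a scalar $s \in \R^* \setminus \{1,-1\}$ together with an isomorphism of real/quaternionic Higgs bundles $(E,\phi,\tau) \simeq (E,s\phi,\tau)$. First, by \cref{Rstar_fixed_equiv_Cstar_fixed}, the hypothesis $(E,\phi) \in \R\MD^{st}(r,d)^{\R^*}$ means that $(E,\phi)$ is fixed by the whole $\C^*$-action, so for every $s \in \R^*$ there is an isomorphism of complex Higgs bundles $f : (E,\phi) \to (E,s\phi)$. Moreover, since $s$ is \emph{real}, the same involution $\tau$ is also a real/quaternionic structure on $(E,s\phi)$: the compatibility diagram \eqref{diagramreal} for $(E,s\phi,\tau)$ reads $(\tau\otimes\sigma)(s\phi) = \ov{s}\,(\tau\otimes\sigma)\phi = s\phi\tau$, which holds because $\ov{s}=s$ and $\tau$ is compatible with $\phi$. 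Thus both $(E,\phi,\tau)$ and $(E,s\phi,\tau)$ are geometrically stable real/quaternionic Higgs bundles with the same underlying $(E,\tau)$, and it remains only to arrange that $f$ intertwines the two copies of $\tau$.

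The heart of the argument, and the step I expect to be the main obstacle, is to upgrade the complex isomorphism $f$ to a $\tau$-equivariant one; the difficulty is that $f$ is canonical only up to a nonzero scalar, and this scalar must be controlled. Since $(E,\phi)$ is stable, so that $\End((E,\phi)) = \{\lambda\,\mathrm{Id}_E\}$, any Higgs isomorphism $(E,\phi) \to (E,s\phi)$ is a scalar multiple of $f$. I would first verify, by a direct diagram chase using $(\tau\otimes\sigma)\phi=\phi\tau$ and $(f\otimes\mathrm{Id}_{\Omega^1_X})\phi = s\phi f$, that the conjugate $\tau f \tau^{-1}$ (which is $\C$-linear, being the composite of $f$ with the two antilinear maps $\tau^{\pm 1}$) is again a Higgs isomorphism $(E,\phi) \to (E,s\phi)$; here one uses crucially that $\sigma^2 = \mathrm{Id}$ on $\Omega^1_X$ and that $\ov{s}=s$. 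By uniqueness up to scalar, $\tau f \tau^{-1} = \lambda f$ for some $\lambda \in \C^*$. Conjugating once more by $\tau$ and using that $\tau^2 = \pm\mathrm{Id}_E$ is central gives $f = \tau(\lambda f)\tau^{-1} = |\lambda|^2 f$, whence $|\lambda| = 1$. Writing $\lambda = e^{i\theta}$ and setting $g := e^{i\theta/2} f$, one computes $\tau g \tau^{-1} = e^{-i\theta/2}(\tau f\tau^{-1}) = e^{-i\theta/2}\lambda f = e^{i\theta/2} f = g$, so $g$ commutes with $\tau$ and is therefore an isomorphism of real/quaternionic Higgs bundles $(E,\phi,\tau) \to (E,s\phi,\tau)$.

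With such an isomorphism in hand for a fixed $s \in \R^*\setminus\{1,-1\}$ (for instance $s=2$), \cref{realHodgebundletheorem} applies verbatim and shows that $(E,\phi)$ admits the structure of a real/quaternionic system of Hodge bundles. For the uniqueness assertion, I would combine two facts: since $(E,\phi)$ is stable, its underlying system of Hodge bundles is unique up to permutation of the indices by \cref{fixedpointsHodge}; and since $(E,\phi)$ is stable and Galois-invariant, the compatible real/quaternionic structure is unique up to isomorphism by \cref{realHiggsbundlemoduli}. Together these yield uniqueness of the real/quaternionic system of Hodge bundles up to permutation of the indices.
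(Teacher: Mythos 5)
Your proof is correct and follows essentially the same route as the paper, which states this corollary as a direct application of \cref{realHodgebundletheorem} and leaves implicit the step you carry out in detail, namely upgrading the complex isomorphism $f:(E,\phi)\to(E,s\phi)$ to a $\tau$-equivariant one (the paper would obtain this from the uniqueness part of \cref{realHiggsbundlemoduli}, whereas you verify it directly via the scalar argument $\tau f\tau^{-1}=\lambda f$, $|\lambda|=1$, and the rescaling $g=e^{i\theta/2}f$). Your treatment of the uniqueness assertion, combining \cref{fixedpointsHodge} with \cref{realHiggsbundlemoduli}, is likewise consistent with the paper's intent.
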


In fact, in the presence of a given real/quaternionic structure, this result can be generalized as follows. Consider a stable but not necessarily geometrically stable real or quaternionic Higgs bundle $(E,\phi,\tau)$ and assume that it is a fixed point of the $\R^*$-action on $\R\MD^{st}(r,d))$. Using the notations introduced in the proof of \cref{realHodgebundletheorem}, we put, for all $j=1,\dots,h$, $E_j\coloneqq \oplus_{\lambda \in I_j} E_{\lambda}$.
By construction, each $E_j$ is $\phi$-invariant, i.e.\ $\phi(E_j) \subseteq E_j \otimes \Omega^1_{X}$, as well as $\tau$-invariant, i.e.\ $\tau(E_j) = E_j$. We thus have a decomposition 
$
(E,\phi,\tau)
=
(E_1,\phi|_{E_1}, \tau|_{E_1}) \oplus \cdots \oplus (E_h,\phi|_{E_h},\tau|_{E_h}) 
$.
Since, given two vector bundles $F,F'$ on $X$, we have 
$\mu(F \oplus F') \leq  \max(\mu(F),\mu(F'))$,
we deduce that $\mu(E) \leq \max_{i=1}^h \mu(E_i)$. If $h \neq 1$, then the fact that $(E,\phi,\tau)$ is stable in the real/quaternionic sense implies that, for all $j\in \{1, \ldots, h\}$, $\mu(E_j) < \mu(E)$. So $\max_{j=1}^h \mu(E_j) < \mu(E)$ and we get a contradiction. Hence $h=1$. Then, if we put $n \coloneqq \mathrm{card}\ I_1$ and, for all $p\geq 0$, $E^{n-p}\coloneqq E^{-p,p}$, where $E^{p,q}$ is defined as in \cref{realSystemOfHodgeBundlesDef}, we obtain a decomposition 
\begin{equation}\label{decompositionfixedrealpoints}
E=E^n \oplus \cdots \oplus E^0
\end{equation}
such that, for all $p$, $\tau(E^p) = E^p$ and $\phi(E^p) \subseteq E^{p-1} \otimes \Omega^1_{X}$. So the structure of a stable real/quaternionic Higgs bundle $(E,\phi,\tau)$ that is fixed by the $\R^*$-action is a close analogue of that of a stable Higgs bundle that is fixed by the $\C^*$-action, even when $(E,\phi,\tau)$ is not geometrically stable. More precisely, using the definition of the $E^{p,q}$ given in \cref{realSystemOfHodgeBundlesDef} depending on whether the characteristic value $\lambda\in\C$ is real or not, we obtain the following statement.

\begin{theorem}\label{stableRealQuatHodgeBundles}
    Let $(E,\phi,\tau)$ be a stable real/quaternionic Hodge bundle and assume there is an $s \in \R^*\setminus\{1,-1\}$ such that $s \cdot (E,\phi,\tau) \simeq (E,\phi,\tau)$.
    \begin{enumerate}
        \item If $(E,\phi,\tau)$ is geometrically stable, then there exist $\lambda\in\R$ and $n\in\mathbb{N}$ such that 
        $
        E = E_\lambda \oplus E_{s\lambda} \oplus \, \ldots \, \oplus E_{s^n \lambda}
        $
        and, for all $j\in\{0,\ \ldots\ ,n\}$, $\tau(E_{s^j\lambda})= E_{s^j\lambda}$ and $\phi(E_{s^j\lambda}) \subset E_{s^{j+1}\lambda} \otimes \Omega^1_X$, where for all $\mu\in\C$, the characteristic sub-bundle $E_\mu$ is defined as in \cref{char_subbdle}.
        \item If $(E,\phi,\tau)$ is stable but not geometrically stable, then there exist $\lambda \in \C \setminus \R$ and $n\in\mathbb{N}$ such that 
        $
        E = (E_\lambda \oplus E_{\ov{\lambda}})\oplus (E_{s\lambda} \oplus E_{s\ov{\lambda}}) \oplus \, \ldots \, \oplus (E_{s^n \lambda} \oplus E_{s^n\ov{\lambda}})
        $
        and, for all $j\in\{0,\ \ldots\ ,n\}$, $\tau(x,y) = (\pm y,x)$ on $E_{s^j\lambda}\oplus E_{s^j\ov{\lambda}}$, where the $\pm$ sign depends on whether $\tau$ is real or quaternionic, and $\phi(E_{s^j\lambda}\oplus E_{s^j\ov{\lambda}}) = (E_{s^{j+1}\lambda}\oplus E_{s^{j+1}\ov{\lambda}})\otimes\Omega^1_X$.
    \end{enumerate}
\end{theorem}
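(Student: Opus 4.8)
The plan is to combine the single-string reduction carried out just before the statement with a careful analysis of the characteristic values of the isomorphism $f$, the point being that geometric stability is detected precisely by whether the leading characteristic value $\lambda$ is real. Recall from that discussion that, writing $E(f)=I_1\sqcup\cdots\sqcup I_h$ and $E_j=\bigoplus_{\lambda\in I_j}E_\lambda$, each $E_j$ is simultaneously $\phi$- and $\tau$-invariant, so the real/quaternionic stability of $(E,\phi,\tau)$ forces $h=1$. Thus $E(f)$ is a single conjugation-closed string $I_1=\{\lambda,s\lambda,\ldots,s^n\lambda,\ov\lambda,\ldots,s^n\ov\lambda\}$, where $\lambda$ is chosen so that $s^{-1}\lambda\notin E(f)$, and $\phi(E_{s^j\lambda})\subseteq E_{s^{j+1}\lambda}\otimes\Omega^1_X$ with $E_{s^{n+1}\lambda}=0$.

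For part (1), assume $(E,\phi)$ is stable. First I would show $\lambda\in\R$. If instead $\lambda\in\C\setminus\R$, then the $\lambda$-orbit and the $\ov\lambda$-orbit are disjoint: an overlap $s^j\lambda=s^k\ov\lambda$ would give $\lambda/\ov\lambda=s^{k-j}\in\R^*$, forcing $\lambda/\ov\lambda=\pm1$ since it has modulus $1$, and as $s\neq\pm1$ both signs lead to $\lambda\in\R$, a contradiction. Consequently $\mathcal{E}\coloneqq\bigoplus_{j=0}^n E_{s^j\lambda}$ is a nonzero proper $\phi$-invariant subbundle with $E=\mathcal{E}\oplus\bigoplus_{j}E_{s^j\ov\lambda}$, contradicting the simplicity of the stable Higgs bundle $(E,\phi)$. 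Hence $\lambda\in\R$, so $s^j\lambda\in\R$ and $\tau(E_{s^j\lambda})=E_{\ov{s^j\lambda}}=E_{s^j\lambda}$ for every $j$; taking the $E_{s^j\lambda}$ as graded pieces gives exactly the asserted structure.

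For part (2), assume $(E,\phi,\tau)$ is stable but not geometrically stable. By \cref{realstable-not-geom} there is a stable $(\mathcal{E},\Phi)\not\simeq(\si^*(\ov{\mathcal{E}}),\si^*(\ov\Phi))$ with $(E,\phi)\simeq(\mathcal{E},\Phi)\oplus(\si^*(\ov{\mathcal{E}}),\si^*(\ov\Phi))$ and $\tau$ exchanging the factors. Here the freedom in the choice of $f$ becomes essential, so I would not start from an arbitrary $f$ but build one adapted to this decomposition. Since $(E,\phi)$ is $\C^*$-fixed, \cref{stable-factors} shows $(\mathcal{E},\Phi)$ is $\C^*$-fixed, so there is an isomorphism $g:(\mathcal{E},\Phi)\to(\mathcal{E},s\Phi)$, whose characteristic values form a single $s$-orbit $\{\mu_0,\ldots,s^m\mu_0\}$ (by stability of $\mathcal{E}$ and the argument of \cref{realHodgebundletheorem}). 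Choosing $c\in\C^*$ with $\lambda\coloneqq c\mu_0\notin\R$ and setting $f\coloneqq cg\oplus\ov c\,\tau g\tau^{-1}$, one checks, using the antilinearity of $\tau$, that $f$ is an isomorphism $(E,\phi,\tau)\to(E,s\phi,\tau)$. Its characteristic values are $\{\lambda,\ldots,s^m\lambda\}$ on $\mathcal{E}$ and $\{\ov\lambda,\ldots,s^m\ov\lambda\}$ on $\si^*(\ov{\mathcal{E}})$, two disjoint orbits since $\lambda\notin\R$ (same overlap computation as above). As $\tau(E_{s^j\lambda})=E_{s^j\ov\lambda}$, grouping $E_{s^j\lambda}\oplus E_{s^j\ov\lambda}$ yields the stated decomposition, with $\tau(x,y)=(\pm y,x)$ according to the real or quaternionic sign and $\phi$ shifting the index.

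The main obstacle I anticipate is the bookkeeping around the non-uniqueness of $f$: in the geometrically stable case $f$ is pinned down up to a real scalar, because it must commute with the antilinear $\tau$, and this is exactly what forces $\lambda$ to be real; in the non-geometrically-stable case the two stable summands are permuted trivially and $\End(E,\phi)=\C\oplus\C$ leaves enough room to rescale $f$ on $\mathcal{E}$ and manufacture a non-real characteristic value. Getting this dichotomy cleanly—and verifying that the rescaled $f$ still commutes with $\tau$ and satisfies $(f\otimes\mathrm{Id})\phi=s\phi f$—is the delicate point; the remainder is a direct transcription of the characteristic-subbundle analysis already used in \cref{realHodgebundletheorem}.
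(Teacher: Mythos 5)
Your proof is correct and follows the same route as the paper: the reduction to a single conjugation-closed $s$-string of characteristic values via real/quaternionic stability, followed by the characteristic-subbundle analysis of \cref{realHodgebundletheorem}. The paper leaves both of your key refinements implicit --- that indecomposability of a geometrically stable $(E,\phi)$ forces the leading characteristic value to be real, and that in the non-geometrically-stable case one must normalize $f$ on the two stable summands (your $f = cg \oplus \ov{c}\,\tau g \tau^{-1}$) to arrange $\lambda \notin \R$ --- and supplying them is a genuine improvement, since for a badly scaled $f$ the characteristic values in case (2) can all be real and the two conjugate orbits then collapse.
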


Theorem \ref{stableRealQuatHodgeBundles} gives an explicit description of the structure of stable real and quaternionic Hodge bundles, depending on whether they are geometrically stable or not. However, in practice, the more abstract description  given in \cref{decompositionfixedrealpoints} is often sufficient (see for instance the proof of \cref{downwardflowsfixed}).

\subsection{Connected components of the real locus}\label{CC_real_locus_section}

We give in \cref{downwardflowsfixed} a real version of \cite[Lemma~11.9]{Simpson_IHES_2}. In Simpson's work, the latter result is used to show (in \cite[Corollary~11.10]{Simpson_IHES_2}) that it is possible to connect any two points $(E_1,\phi_1)$ and $(E_2,\phi_2)$ in $\MD(r,d)$, by connecting each one of them to a point in $\mathcal{N}(r,d)$ via a series of BB flows and exploiting the connectedness of $\mathcal{N}(r,d)$, as illustrated in \cref{fig:BB_flows_btw_pts}.

\begin{figure}[!ht]
    \centering
   \begin{tikzpicture}[scale=1.2]
    \fill[blue!30] (-1,0) -- (-3,4) to (3,4) -- (1,0) -- cycle;
    
    \draw[very thick] (-1.75,1.5) -- (-1.25,0.5);
    \draw[very thick] (-2,2) -- (-3,4);
    \draw[very thick] (2,2) -- (3,4);
    \draw[very thick] (1.75,1.5) -- (1.25,0.5);
    \draw[very thick] (-1,0) -- (1,0);
    
    \node at (-2.3,0.6) {$\mathbf{M}_{\mathrm{Dol}}(r,d)^{\mathbb{C}^*}$};
    \node at (0,3) {Nilpotent cone};
    \node at (-1.5,-0.25) {$\mathcal{N}(r,d)$};
    
    \node[circle,fill,inner sep=1.5pt] at (-4.1,3.525) {};
    \node at (-4.7,3.5) {$(E_1,\phi_1)$};
            
    \node[circle,fill,inner sep=1.5pt] at (-2.45,2.9) {};
    
    \draw[-Triangle, thick, line join=round,
decorate, decoration={
    zigzag,
    segment length=8,
    amplitude=2,post=lineto,
    post length=2pt
}] 
        (-4.1,3.5) to (-2.6,3);
    
    \node[circle,fill,inner sep=1.5pt] at (-1,2.05) {};

    \node[circle,fill,inner sep=1.5pt] at (-1.6,1.2) {};

    \node[circle,fill,inner sep=1.5pt] at (-0.45,1) {};
    
    \draw[-Triangle, thick, line join=round,
decorate, decoration={
    zigzag,
    segment length=8,
    amplitude=2,post=lineto,
    post length=2pt
}] 
        (-1,2.05) to (-2.28,2.8);

    \node[circle,fill,inner sep=1.5pt] at (2.5,1) {};
            
    \node[circle,fill,inner sep=1.5pt] at (1.45,0.9) {};
    \node at (3.2,1) {$(E_2,\phi_2)$};
    
        \node[circle,fill,inner sep=1.5pt] at (-0.3,0) {};

    \draw[-Triangle, thick, line join=round,
decorate, decoration={
    zigzag,
    segment length=8,
    amplitude=2,post=lineto,
    post length=2pt
}] 
        (2.5,1) to (1.6,0.9);
    
    \draw[-Triangle, thick, line join=round,
decorate, decoration={
    zigzag,
    segment length=8,
    amplitude=2,post=lineto,
    post length=2pt
}] 
        (-1,2) to (-1.55,1.3);

    \draw[-Triangle, thick, line join=round,
decorate, decoration={
    zigzag,
    segment length=8,
    amplitude=2,post=lineto,
    post length=2pt
}] 
         (-0.5,1) to (-1.55,1.2);

    \draw[-Triangle, thick, line join=round,
decorate, decoration={
    zigzag,
    segment length=8,
    amplitude=2,post=lineto,
    post length=2pt
}] 
        (-0.5,1) to (-0.325,0.08);
        
    \node[circle,fill,inner sep=1.5pt] at (0.7,0.7) {};
    
    \draw[-Triangle, thick, line join=round,
decorate, decoration={
    zigzag,
    segment length=8,
    amplitude=2,post=lineto,
    post length=2pt
}] 
        (0.7,0.65) to (1.35,0.9) ;

    \node[circle,fill,inner sep=1.5pt] at (0.25,0) {};
    
    \draw[-Triangle, thick, line join=round,
decorate, decoration={
    zigzag,
    segment length=8,
    amplitude=2,post=lineto,
    post length=2pt
}] 
        (0.65,0.65) to (0.35,0.1);

\end{tikzpicture}
    \caption{Connecting any two points via Bialynicki-Birula flows.}
    \label{fig:BB_flows_btw_pts}
\end{figure}
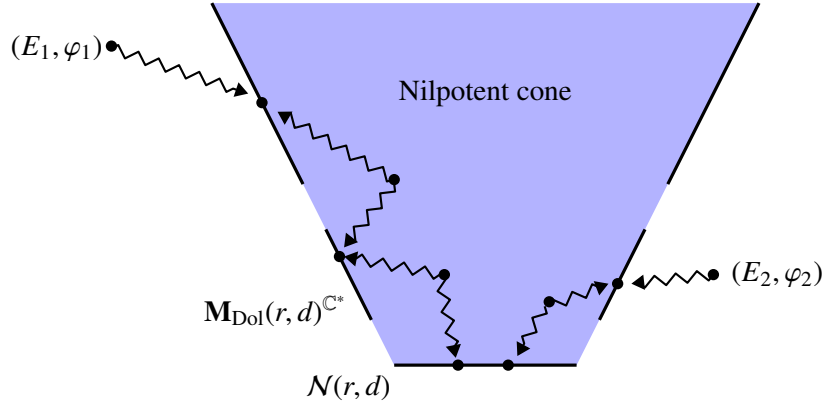

\smallskip

In the real setting, we will show in \cref{CC_real_locus} that, although $\R\mathcal{N}(r,d)$ is not connected in general, we can refine Simpson's arguments to count the connected components of $\R\MD(r,d)$, at least when $\gcd(r,d)=1$. This is achieved by considering the real part of the Bialynicki-Birula flows, whose existence is guaranteed by the anti-commutation property of \cref{anticommutative}, and by using the fact that, when $\gcd(r,d) = 1$, we can count the connected components of $\R\mathcal{N}(r,d)$. The first step towards this is \cref{downwardflowsfixed}. Recall that, for all $s \in \pi_0(\R\Pic_d(X))$, we denote by $\R\MD(r,d,s)$ the fibre $c^{-1}(s)$ of the map $c : \R\MD(r,d) \to \pi_0(\R\Pic_d(X))$ introduced in \cref{obstruction_map}.

\begin{lem}
\label{downwardflowsfixed}
Let $s \in \pi_0(\R\Pic_d(X))$ and let $(E,\phi,\tau)$ be a stable real/quaternionic Higgs bundle such that $(E,\phi)\in\R\MD(r,d,s)^{\R^*}$. If $\phi\neq 0$, then there exists a stable real/quaternionic Higgs bundle $(F,\theta,\tau_F)$ such that $(F,\theta) \not\simeq (E,\phi)$, and a complete real BB flow $f : \R\mathbf{P}^1 \to \R\MD(r,d,s)$ such that $f(1) = (F,\theta)$ and $f(\infty) = (E,\phi)$. In particular, $f(0) = \lim_{t \to 0} t \cdot (F,\theta)$ also lies in $\R\MD(r,d,s)$.
\end{lem}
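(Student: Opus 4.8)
The plan is to realise $(F,\theta)$ as a real point, distinct from $(E,\phi)$, of the \emph{upward} Bialynicki--Birula cell $W^+(E,\phi)=\{(F,\theta):\lim_{t\to\infty}t\cdot(F,\theta)=(E,\phi)\}$. First I would record the structure of the input. By \cref{Rstar_fixed_equiv_Cstar_fixed}, the hypothesis $(E,\phi)\in\R\MD(r,d,s)^{\R^*}$ means that $(E,\phi)$ is fixed by all of $\C^*$; since the compatible structure $\tau$ is $\R^*$-equivariant by the uniqueness part of \cref{realHiggsbundlemoduli}, \cref{realHodgebundletheorem} and the decomposition \eqref{decompositionfixedrealpoints} equip $(E,\phi,\tau)$ with a real/quaternionic system of Hodge bundles $E=E^n\oplus\cdots\oplus E^0$, with $\tau(E^p)=E^p$ and $\phi(E^p)\subseteq E^{p-1}\otimes\Omega^1_X$. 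The assumption $\phi\neq 0$ makes this grading non-trivial, and it is exactly this that will force the cell to be positive-dimensional.

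Next I would describe $W^+(E,\phi)$ infinitesimally. As $(E,\phi)$ is stable, it is a smooth point of $\MD(r,d)$ and the $\C^*$-action linearises there: the tangent space, the hypercohomology $\mathbb{H}^1$ of the deformation complex $\End(E)\xrightarrow{\mathrm{ad}\,\phi}\End(E)\otimes\Omega^1_X$, splits into $\C^*$-weight spaces induced by the Hodge grading (a homomorphism $E^p\to E^{p+k}$ carrying weight $k$), and $W^+(E,\phi)$ is the affine cell attached to the strictly negative weights, contracting to $(E,\phi)$ as $t\to\infty$. Its points are the filtered Higgs bundles with associated graded $(E,\phi)$, produced from non-split extensions of the Hodge pieces; the resulting $\theta$ is strictly lower-triangular for the grading, hence nilpotent, so its flow is complete. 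That this negative-weight space is non-zero when $\phi\neq 0$ is precisely the content of \cite[Lemma 11.9]{Simpson_IHES_2}, whose explicit extension I would reproduce.

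The genuinely new step over Simpson is to carry this out $\sigma$-equivariantly. The key observation is that the compatibility relation \eqref{anticommutative}, $\sigma(t\cdot x)=\ov{t}\cdot\sigma(x)$, forces the anti-holomorphic involution induced by $\sigma$ on $\mathbb{H}^1$ to send each $\C^*$-weight space to itself; hence it restricts to an anti-linear involution of the negative-weight subspace underlying $W^+(E,\phi)$, whose fixed locus is a real form of the same, positive, dimension. Concretely, Simpson's extension class can be averaged over $\tau$ so as to become $\tau$-invariant, producing a real/quaternionic structure $\tau_F$ on $(F,\theta)$ that extends $\tau$ with $\tau_F^2=\tau^2$, so that $(F,\theta,\tau_F)$ is of the same real or quaternionic type as $(E,\phi,\tau)$. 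This yields a point $(F,\theta)\in W^+(E,\phi)\cap\R\MD(r,d)$ with $(F,\theta)\not\simeq(E,\phi)$, which I would take close enough to the stable point $(E,\phi)$ that it is again stable (openness of the stable locus), hence geometrically stable as a real/quaternionic Higgs bundle by \cref{realstable-not-geom}.

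Finally I would set $f(t)=t\cdot(F,\theta)$. By \eqref{anticommutative} and the reality of $(F,\theta)$ one has $\sigma(f(t))=f(\ov{t})$, so $f$ is a real BB flow, and the nilpotency of $\theta$ makes it complete, extending to $\R\widehat{f}:\R\mathbf{P}^1\to\R\MD(r,d)$ with $f(1)=(F,\theta)$ and $f(\infty)=(E,\phi)$. \cref{limit-proposition}, applied to the Galois-invariant $(F,\theta)$, then confines the whole flow — and in particular $f(0)$ — to the single fibre $\R\MD(r,d,s)$ containing $f(1)$, which is the fibre of $(E,\phi)$ by hypothesis. I expect the main obstacle to be exactly the real refinement of the third paragraph: verifying that the $\tau$-averaged extension stays non-split (so that $(F,\theta)\not\simeq(E,\phi)$) and that the real form of the negative-weight space is genuinely positive-dimensional, while tracking the sign of $\tau_F^2$ through the construction in order to preserve the real/quaternionic type.
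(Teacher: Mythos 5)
There is a genuine gap: your argument silently assumes that $(E,\phi)$ is stable as a Higgs bundle (``As $(E,\phi)$ is stable, it is a smooth point of $\MD(r,d)$\dots''), but the hypothesis of the lemma is only that $(E,\phi,\tau)$ is stable as a \emph{real/quaternionic} Higgs bundle. By \cref{realstable-not-geom} this splits into two cases, and in the second one $(E,\phi)\simeq(\mathcal{E},\Phi)\oplus\sigma(\mathcal{E},\Phi)$ is merely polystable, with $(\mathcal{E},\Phi)$ stable but not Galois-invariant. Your entire construction --- the smooth point, the weight decomposition of $\mathbb{H}^1$ of the deformation complex, the identification of $W^+(E,\phi)$ with the negative-weight space, and the appeal to \cref{realHiggsbundlemoduli} to make $\tau$ compatible with the $\R^*$-fixed structure --- breaks down there. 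This case cannot be discarded: it is exactly the one produced by \cref{polystable-real} when the lemma is invoked in \cref{flowrealpart-ss}. The paper handles it by a different mechanism: apply Simpson's \cite[Lemma~11.9]{Simpson_IHES_2} to the complex summand $(\mathcal{E},\Phi)$ to get a flow $g$ with $g(\infty)=(\mathcal{E},\Phi)$, then set $f(t)=g(t)\oplus\sigma(g(\ov{t}))$; the compatibility \eqref{anticommutative} makes $f$ a real flow, and \cref{stable-factors} guarantees $f(1)$ is not $\C^*$-fixed, hence $f(1)\neq(E,\phi)$.

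In the geometrically stable case your route is essentially the paper's, just phrased through BB cells: the paper works directly with the slope inequalities $\mu(E^0)<\mu(E)<\mu(E^n)$ to get $\Ext^1(E^n,E^0)\neq 0$, observes that $\tau|_{E^0}\otimes(\tau|_{E^n})^*$ squares to the identity in both the real and the quaternionic case (so the induced structure on $\Ext^1(E^n,E^0)$ is always a \emph{real} structure, which settles your worry about tracking the sign of $\tau_F^2$), and picks a nonzero class in the fixed real form $\Ext^1(E^n,E^0)^\tau$ --- which is automatically nonzero of the same dimension, a cleaner step than averaging, since $\eta+\tau\eta$ can vanish for anti-invariant $\eta$. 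Two smaller points: stability of the extension $(F_t,\theta_t)$ is taken from Simpson's explicit argument rather than from ``openness of the stable locus near $(E,\phi)$'', which is not justified as stated since $(F,\theta)$ is not a small deformation of $(E,\phi)$ in the moduli space; and the confinement of the flow to the fibre $c^{-1}(s)$ follows from connectedness of $\R\mathbf{P}^1$ and open-closedness of \eqref{fibers-topologicaltype}, as you say.
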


\begin{proof}[Proof of \cref{downwardflowsfixed}]
We distinguish two cases, depending on whether the real or quaternionic Higgs bundle $(E,\phi, \tau)$ is geometrically stable or not. 

\smallskip

\textbf{Case 1.} Assume that $(E,\phi,\tau)$ is geometrically stable. Using the same notation as in \cref{decompositionfixedrealpoints}, we can write $E = E^n \oplus\, \ldots\, \oplus E^0$ with $\phi(E^j) \subset E^{j-1} \otimes \Omega^1_X$ and $\tau(E^j) = E_j$. Note that $n > 0$  (otherwise $\phi=0$, which contradicts our assumption on $\phi$) and that, for all $j$ the vector bundle $E'_j \coloneqq E^j \oplus \, \ldots\, \oplus E^0$ is $\phi$-invariant. As the Higgs bundle $(E,\phi)$ is stable, we then have $\mu(E'_{n-1}) < \mu(E)$ and this implies that $\mu(E^n) = \mu(E/E'_{n-1}) > \mu(E)$. Since $E^0$ is $\phi$-invariant, we also have $\mu(E^0) < \mu(E)$. Thus, $\mu(E^0) < \mu(E) < \mu(E^n)$. Using these two inequalities, we deduce that
\begin{equation}\label{dimensionhom}
\begin{array}{rcl}
\deg(E^0 \otimes (E^n)^*) & = & \deg(E^0)\rk(E^n)-\deg(E^n)\rk(E^0) \\
& = & \rk(E^0)\rk(E^n)(\mu(E^0)-\mu(E^n)\big) \\
& < & \rk(E^0)\rk(E^n)(\mu(E)-\mu(E^n)\big) \\
& < & 0.
\end{array}
\end{equation}
Applying the Riemann-Roch formula
$h^0(X,\mathcal{E}) - h^1(X,\mathcal{E}) = \deg(\mathcal{E}) + \mathrm{rk}(\mathcal{E})(1-g)$
to the vector bundle $\mathcal{E} = E^0 \otimes (E^n)^*$, we deduce from \ref{dimensionhom} that $h^1(X,\mathcal{E}) >0$, hence
\begin{equation}
\label{Ext_nonzero}
\Ext^1\big(E^n,E^0\big) = H^1\big(X,E^0 \otimes (E^n)^*\big) \neq \{0\}.    
\end{equation}
Note now that, whether $\tau$ be real or quaternionic, the morphism $\tau|_{E_0} \otimes (\tau|_{E_n})^*$ endows the vector bundle $E^0 \otimes (E^n)^*$ with a real structure, since $(\tau \otimes \tau)^2 = \tau^2 \otimes \tau^2 = (\pm I_{E^0}) \otimes (\pm I_{(E^n)^*}) =I_{E^0 \otimes (E^n)^*}$. The complex vector space $H^1(X,E^0 \otimes (E^n)^*)=\Ext^1(E^n,E^0)$ therefore has an induced real structure, that we still denote by $\tau:\Ext^1(E^n,E^0) \to \Ext^1(E^n,E^0)$. At the level of extensions, this real structure has the following concrete interpretation. Denote by $\alpha_n:E^n \to \sigma^*(\overline{E^n}) $ and $\alpha_0:E^0 \to \sigma^*(\overline{E^0})$ the isomorphisms of complex vector bundles given by the real/quaternionic structures of $E^n$ and $E^0$. Given a short exact sequence 
$$
\eta = \big(0 \rightarrow E^0 \xrightarrow{f} M \xrightarrow{g} E^n \rightarrow 0\big) \in \Ext^1(E^n,E^0),
$$ 
its image by $\tau$ is given by 
$$
\tau(\eta)= \big(0 \rightarrow E^0 \xrightarrow{\sigma^*(\overline{f}) \circ \alpha_0} \sigma^*(\overline{M}) \xrightarrow{\alpha_n^{-1}\circ \sigma^*(\overline{g})} E^n \rightarrow 0\big).
$$
Pick now a non-zero real short exact sequence, i.e.\ a non-trivial element $\eta \in \Ext^1(E^n,E^0)^{\tau}$, which is non-zero by \cref{Ext_nonzero}, since $\tau$ is a linear real structure. As in Simpson's proof of \cite[Lemma~11.9]{Simpson_IHES_2}, consider, for all $t\in \R$, the real extension 
$$
t^n \eta = \big(0 \rightarrow E^0 \xrightarrow{f_t} M_t \xrightarrow{g_t} E^n \rightarrow 0\big).
$$
This comes equipped with an isomorphism of complex vector bundles
\begin{equation}
\label{isomorhismM}
\alpha_t : M_t \to \sigma^*(\overline{M_t})\end{equation} such that $\sigma^*(\overline{\alpha_t}) \alpha_t = \pm \mathrm{Id}_M$, depending on whether $(E,\phi,\tau)$ is a real/quaternionic Higgs bundle. Let then $(F_t,\theta_t)$ be the Higgs bundle given, for all $t\in \C$, by
$$F_t = M_t \oplus E^{n-1} \oplus\, \ldots\, \oplus E^1 $$
where $\theta_t$ is given by $\phi$ on the summands $E^j$ for $2 \leq j \leq n-1$, by 
$$ 
E^1 \xrightarrow{\phi} E^0 \otimes \Omega^1_X \xrightarrow{f_t \otimes \mathrm{Id}_{\Omega^1_X}} M \otimes \Omega^1_X
$$ on the summand $E^1$ and by 
$$
M_t \xrightarrow{g_t} E^n \xrightarrow{\phi} E^{n-1} \otimes \Omega^1_X
$$ 
on the summand $M_t$. The argument in Simpson's proof shows that $(F_t,\theta_t)$ is stable. We deduce from \cref{realHiggsbundlemoduli} and \cref{isomorhismM} that, if $t\in\R$, then $(F_t,\theta_t)$ has an induced real/quaternionic structure, with respect to which it is therefore geometrically stable. So, $\forall t\in\R, (F_t,\theta_t) \in \R\MD^{st}(r,d)$. Simpson's proof also shows that  $(F,\theta) \coloneqq (F_1,\theta_1) \neq (E,\phi)$ and that $\lim_{t\to\infty}t\cdot(F,\theta) = (E,\phi)$. So, the map $t \mapsto t \cdot (F,\theta)$ defines a complete real BB flow $f : \R\mathbf{P}^1 \to \R\MD(r,d)$ such that $f(\infty) = (E,\phi)\in\R\MD(r,d,s)$. By connectedness of $\R\mathbf{P}^1$ and open-closedness of $\R\MD(r,d,s)$, we have $f(\R\mathbf{P}^1)\subset \R\MD(r,d,s)$. In particular, $f(1) = (F,\theta)$ and $f(0) = \lim_{t\to 0} t\cdot(F,\theta)$ also lie in $\R\MD(r,d,s)$.

\smallskip

\textbf{Case 2.} Assume now that $(E,\phi, \tau)$ is stable but not geometrically stable. By Remark \ref{realstable-not-geom}, the rank $r$ and degree $d$ of $E$ are necessarily even and there exists a stable Higgs bundle $(\mathcal{E},\Phi) \in \MD^{st}(r/2,d/2)$ such that $\si(\mathcal{E},\Phi) \not\simeq (\mathcal{E},\Phi)$ and
$
(E,\phi) \simeq (\mathcal{E},\Phi) \oplus \si(\mathcal{E},\Phi)
$.
By \cite[Lemma 11.9]{Simpson_IHES_2}, there exists a complete BB flow $g: \mathbf{P}^1 \to \MD(r/2,d/2)$ such that $(\mathcal{F},\Theta) \coloneqq g(1) \not\simeq (\mathcal{E},\Phi)$ and $g(\infty) = (\mathcal{E},\Phi)$. In particular, $g$ is noncontant and $(\mathcal{F},\Theta)$ is not $\C^*$-fixed. Note that $(\mathcal{F},\Theta) \not \simeq \si(\mathcal{F},\Theta) $, as otherwise we would have
$$
(\mathcal{E},\Phi) = \lim_{t\to\infty} t\cdot(\mathcal{F},\Theta) = \lim_{t\to\infty} t\cdot\si(\mathcal{F},\Theta) = \si(\lim_{t\to\infty} \ov{t} \cdot (\mathcal{F},\Theta)\big) = \sigma(\mathcal{E},\Phi),
$$
contradicting our assumption on $(\mathcal{E},\Phi)$. Consider then the complete BB flow
$$
\begin{array}{rcl}
f : \mathbf{P}^1 & \lra & \MD(r,d) \\
t & \lmt & g\big(t\big) \oplus \si\big(g(\ov{t})\big)
\end{array}
$$
and set $(F,\theta) \coloneqq f(1)$. Since $g(\ov{t}) = \ov{t}\cdot (\mathcal{F},\Theta)$, by \cref{anticommutative} we have $\si(g(\ov{t})) = t \cdot \si(\mathcal{F},\Theta)$. So, $\forall t\in\C, f(t) = t\cdot ((\mathcal{F},\Theta)\oplus \si(\mathcal{F},\Theta))$. In particular, $(F,\theta) = (\mathcal{F},\Theta) \oplus \si(\mathcal{F},\Theta) \in \R\MD(r,d)$. This shows that $(F,\theta)$ is not fixed by the $\C^*$-action since, if it was, then by \cref{stable-factors} $(\mathcal{F},\Theta)$ would also be fixed by the $\C^*$-action. In particular, $(F,\theta) \neq (E,\phi)$. Moreover, 
$$
f(\infty) = g(\infty) \oplus \si\big(g(\infty)\big) = (\mathcal{E},\Phi) \oplus \si(\mathcal{E},\Phi) = (E,\phi) \in \R\MD(r,d,s).
$$
Therefore, as in the previous case, $f(\R\mathbf{P}^1) \subset \R\MD(r,d,s)$. In particular, $f(1) = (F,\theta)$ and $f(0) = \lim_{t\to 0} t\cdot(F,\theta)$ also lie in $\R\MD(r,d,s)$.
\end{proof}

As a consequence of \cref{downwardflowsfixed}, we obtain the following result, the complex analogue of which appears within the proof of \cite[Corollary 11.10]{Simpson_IHES_2}.

\begin{prop}
\label{flowrealpart-ss}
Let $s \in \pi_0(\R\Pic_d(X))$ and let $(E,\phi) \in \R\MD(r,d,s)^{\R^*}$ such that $\phi \neq 0$. Then there exists $(F,\theta) \in \R\MD(r,d)$ such that $(F,\theta) \not\simeq (E,\phi)$ and a complete real BB flow $f : \R\mathbf{P}^1 \to \R\MD(r,d,s)$ such that $f(1) = (F,\theta)$ and $f(\infty) = (E,\phi)$. In particular $f(0) = \lim_{t\to 0} t \cdot (F,\theta)$ also lies in $\R\MD(r,d,s)$.
\end{prop}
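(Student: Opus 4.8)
The plan is to deduce the result from \cref{downwardflowsfixed} by reducing the general $\R^*$-fixed real point to a single stable summand carrying a nonzero Higgs field. First, since $(E,\phi)$ lies in $\R\MD(r,d,s)^{\R^*}$, \cref{Rstar_fixed_equiv_Cstar_fixed} shows that it is in fact fixed by the whole $\C^*$-action, so its polystable representative is a $\C^*$-fixed real point. Unlike the situation of \cref{downwardflowsfixed}, this representative need not be stable; instead, \cref{polystable-real} provides a decomposition $(E,\phi) \simeq (E_1,\phi_1) \oplus \cdots \oplus (E_\ell,\phi_\ell)$ into stable real or quaternionic Higgs bundles $(E_i,\phi_i,\tau_i)$, all of slope $d/r$.

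The next step is to isolate one summand to which \cref{downwardflowsfixed} applies. By \cref{stable-factors}, every stable factor (in the complex sense) of the $\C^*$-fixed polystable bundle $(E,\phi)$ is again $\C^*$-fixed; since each $(E_i,\phi_i)$ is either complex-stable or a sum of two such factors by \cref{realstable-not-geom}, each $(E_i,\phi_i)$ is itself $\C^*$-fixed, hence $\R^*$-fixed. As $\phi = \bigoplus_i \phi_i \neq 0$, there is an index $i_0$ with $\phi_{i_0} \neq 0$. I then apply \cref{downwardflowsfixed} to the stable real/quaternionic Higgs bundle $(E_{i_0},\phi_{i_0},\tau_{i_0})$, obtaining a stable real/quaternionic Higgs bundle $(F_{i_0},\theta_{i_0},\tau_{F_{i_0}})$ with $(F_{i_0},\theta_{i_0}) \not\simeq (E_{i_0},\phi_{i_0})$ together with a complete real BB flow $f_{i_0}$ satisfying $f_{i_0}(\infty) = (E_{i_0},\phi_{i_0})$ and $f_{i_0}(1) = (F_{i_0},\theta_{i_0})$.

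To reassemble, I set $(F,\theta) \coloneqq (F_{i_0},\theta_{i_0}) \oplus \bigoplus_{i\neq i_0}(E_i,\phi_i)$, a polystable real point, and define $f(t) \coloneqq f_{i_0}(t) \oplus \bigoplus_{i\neq i_0}(E_i,\phi_i)$. Because the summands with $i\neq i_0$ are $\C^*$-fixed and the $\C^*$-action scales only the Higgs field, this $f$ is exactly the BB flow $t \mapsto t\cdot(F,\theta)$ of $(F,\theta)$; it is complete since $f_{i_0}$ extends to $\R\mathbf{P}^1$ and the remaining summands are constant, and it is a real flow because $\si$ distributes over direct sums and fixes each $(E_i,\phi_i)$ while satisfying $\si(f_{i_0}(t)) = f_{i_0}(\ov{t})$. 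By construction $f(\infty) = (E,\phi) \in \R\MD(r,d,s)$ and $f(1) = (F,\theta)$; exactly as in \cref{limit-proposition} and \cref{downwardflowsfixed}, connectedness of $\R\mathbf{P}^1$ together with the open-closed decomposition \eqref{fibers-topologicaltype} forces $f(\R\mathbf{P}^1) \subset \R\MD(r,d,s)$, so that $f(0) = \lim_{t\to 0}t\cdot(F,\theta)$ lies there as well.

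The step I expect to be the main obstacle is verifying $(F,\theta) \not\simeq (E,\phi)$, since knowing only that $(F_{i_0},\theta_{i_0}) \not\simeq (E_{i_0},\phi_{i_0})$ is not enough: the modified factor could a priori be isomorphic to a different summand $(E_j,\phi_j)$. The point is that $(F_{i_0},\theta_{i_0})$ cannot be $\C^*$-fixed, for otherwise $f_{i_0}(\infty)$ would equal $(F_{i_0},\theta_{i_0})$ rather than $(E_{i_0},\phi_{i_0})$; hence at least one stable factor of $(F,\theta)$ fails to be $\C^*$-fixed, whereas by \cref{stable-factors} every stable factor of $(E,\phi)$ is $\C^*$-fixed. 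Uniqueness of the polystable decomposition up to permutation of the factors then rules out any isomorphism $(F,\theta) \simeq (E,\phi)$.
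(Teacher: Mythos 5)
Your proposal is correct and follows essentially the same route as the paper: reduce via \cref{Rstar_fixed_equiv_Cstar_fixed}, \cref{polystable-real} and \cref{stable-factors} to a single $\R^*$-fixed stable real/quaternionic summand with nonzero Higgs field, apply \cref{downwardflowsfixed} to it, and reassemble the direct sum into a complete real BB flow landing in $\R\MD(r,d,s)$ by open-closedness. Your closing argument for $(F,\theta)\not\simeq(E,\phi)$ (a non-$\C^*$-fixed stable factor of $(F,\theta)$ versus all-fixed factors of $(E,\phi)$, plus uniqueness of the polystable decomposition) is a valid justification of a step the paper asserts without detail.
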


\begin{proof}
Since $(E,\phi)$ is a Higgs bundle which is both polystable and Galois-invariant, Proposition \ref{polystable-real} shows that there exist stable (but not necessarily geometrically stable) real/quaternionic Higgs bundles $(E_1,\phi_1,\tau_1),\dots,(E_\ell,\phi_\ell,\tau_\ell)$, all of them of slope $d/r$, such that 
$
(E,\phi) \simeq (E_1,\phi_1) \oplus \cdots \oplus (E_\ell,\phi_\ell)
$
as Higgs bundles. Since $(E,\phi)$ is $\R^*$-fixed, \cref{Rstar_fixed_equiv_Cstar_fixed} shows that it is also $\C^*$-fixed. So \cref{stable-factors} shows that each $(E_i,\phi_i)$ is $\C^*$-fixed. Since $\phi \neq 0$, there exists $i$ such that $\phi_i \neq 0$. Up to reordering, we can assume that $\phi_1 \neq 0$. We can thus write $(E,\phi)=(E_1,\phi_1)\oplus (E_1',\phi_1')$ where $(E_1,\phi_1)$ comes equipped with a stable real/quaternionic structure $\tau_1$. Moreover, $(E_1,\phi_1,\tau_1)$ is $\R^*$-fixed and, by construction, $\phi_1 \neq 0$. Therefore, by \cref{downwardflowsfixed}, there exists a stable real/quaternionic Higgs bundle $(F_1,\theta_1,\tau_{F_1})$ such that $(F_1,\theta_1) \not\simeq (E_1,\phi_1)$, and a complete real BB flow $g : \R\mathbf{P}^1 \to \R\MD(r_1,d_1)$ such that $g(1) = (F_1,\theta_1)$ and $\lim_{t\to\infty} t \cdot (F_1,\theta_1)= (E_1,\phi_1)$. We then put, for all $t\in\R^*$, 
$$
(F_t,\theta_t) \coloneqq \big(t\cdot (F_1,\theta_1)\big) \oplus (E_1',\phi_1') = t \cdot \big((F_1,\theta_1) \oplus (E_1',\phi_1')\big),
$$
The map $f(t) \coloneqq (F_t,\theta_t)$ defines a complete real BB flow $f : \R\mathbf{P}^1 \to \R\MD(r,d)$ such that $(F,\theta) \coloneqq f(1) \not\simeq (E,\phi)$ and $f(\infty) = (E,\phi)\in \R\MD(r,d,s)$. Therefore, as in \cref{downwardflowsfixed}, $f(\R\mathbf{P}^1) \subset \R\MD(r,d,s)$ and in particular $f(0) \in \R\MD(r,d,s)$.
\end{proof}

We can now show our main result, which is \cref{CC_real_locus}. We need the following notation, introduced by Simpson in \cite[Chapter~11, pp.~75-77]{Simpson_IHES_2}. Recall that the action of $\C^*$ on $\MD(r,d)$ can be linearized. In particular, there exists a vector space $V$ with a linear $\C^*$-action such that $\MD(r,d)$ is an open subset of a $\C^*$-stable projective subvariety $Z \subseteq \mathbb{P}(V)$ and the action of $\C^*$ is induced by this inclusion. Consider the decomposition of $V$ into weight spaces $V=\bigoplus_{\alpha \in W} V_{\alpha}$, where $V_\alpha \coloneqq \{ z \in V\ |\ \forall t\in \C^*,\ t \cdot z = t^\alpha z \}$ and $W \coloneqq \{\alpha \in \Z\ |\ V_\alpha \neq 0\}$ is the set of weights, which is finite since $\dim V<\infty$. Setting $Z_{\alpha} \coloneqq Z \cap \mathbb{P}(V_{\alpha})$, we have open-closed decompositions 
$$
\mathbb{P}(V)^{\C^*}=\bigsqcup_{\alpha \in W} \mathbb{P}(V_{\alpha})\ ,
\quad
Z^{\C^*}=\bigsqcup_{\alpha \in W} Z_{\alpha}\ ,\quad \mathrm{and} \quad
\MD(r,d)^{\C^*}=\bigsqcup_{\alpha \in W} \MD(r,d)_{\alpha}\ ,
$$ where $\MD(r,d)_{\alpha} \coloneqq Z_{\alpha} \cap \MD(r,d)$. In particular, there is a well-defined function $\beta: Z^{\C^*} \to \Z$ sending $p \in Z^{\C^*}$ to the unique $\alpha\in W$ such that $z\in Z_\alpha$ (so, by definition of $\beta$, we have $p \in Z_{\beta(p)}$ for all $p\in Z^{\C^*}$). Since $\beta$ is continuous, it is constant on every $F \in \pi_0(\MD(r,d)^{\C^*})$. Moreover, since $Z$ is compact, we have that, for all $z \in Z$, the limits $\lim_{t \to 0} t \cdot z$ and $\lim_{t \to \infty} t \cdot z \in Z^{\C^*}$ exist. Denote by $\beta^0(z),\beta^{\infty}(z) \in W$ the elements such that $\lim_{t \to 0} t \cdot z \in Z_{\beta^0(z)}$ and $\lim_{t \to \infty} t \cdot z \in Z_{\beta^{\infty}(z)}$. Simpson shows that, for all $z \in Z$, $\beta^0(z) \leq \beta^{\infty}(z)$ and that $\beta^0(z)=\beta^{\infty}(z)$ if and only if $z$ is $\C^*$-fixed (\cite[p.75]{Simpson_IHES_2}). In the case of $\MD(r,d)$, up to rescaling, we can assume that $\beta(\mathcal{N}(r,d)) = 0$. Notice that \cite[Lemma~11.8 and Corollary~11.10]{Simpson_IHES_2} imply that for every $p \in \MD(r,d)^{\C^*}\setminus \mathcal{N}(r,d)$, we have $\beta(p) >0$. Indeed, assume that there exists $p\in\MD(r,d)^{\C^*}\setminus \mathcal{N}(r,d)$ such that $\beta(p) \leq 0$ and take such a $p$ corresponding to the minimal weight, so that $\beta(p) \leq \beta(p')$ for all $p' \in \MD(r,d)^{\C^*}$. The arguments of \cite[Corollary~11.10]{Simpson_IHES_2} show that there exists $z \in \MD(r,d)\setminus \{p\}$ such that $\lim_{t \to \infty} t \cdot z = p$. In particular, $z$ is not a $\C^*$-fixed point and thus $\beta(\lim_{t \to 0} t \cdot z)=\beta^0(z) < \beta^{\infty}(z)=\beta(\lim_{t \to \infty} t \cdot z) = \beta(p)$, which contradicts the minimality of $\beta(p)$. We now apply these ideas in the real context.

\begin{theorem}
\label{implication-realpart}
For all $s \in \pi_0(\R\Pic_d(X))$, if $\R\mathcal{N}(r,d,s)$ is connected, then so is $\R\MD(r,d,s)$.
\end{theorem}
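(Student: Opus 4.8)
The plan is to transpose Simpson's proof of the connectedness of $\MD(r,d)$ to the real setting, using the real BB flows of \cref{limit-proposition,flowrealpart-ss} together with the weight function $\beta$ recalled just above. Throughout I work inside the single fibre $\R\MD(r,d,s)$, which is open-closed in $\R\MD(r,d)$ and stable under the $\R^*$-action. Since the map $c$ does not involve the Higgs field, one has $\R\mathcal{N}(r,d,s)=\R\MD(r,d,s)\cap\mathcal{N}(r,d)$, so $\R\mathcal{N}(r,d,s)$ is a subset of $\R\MD(r,d,s)$ that is connected by hypothesis and hence contained in a single connected component $C$ of $\R\MD(r,d,s)$. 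It therefore suffices to show that every point of $\R\MD(r,d,s)$ lies in $C$.

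First I would flow an arbitrary point down to a fixed point. Given $(E,\phi)\in\R\MD(r,d,s)$, represented by a polystable Galois-invariant Higgs bundle, \cref{limit-proposition} shows that its BB flow is a real BB flow with $f(\R)\subset\R\MD(r,d,s)$ and limit $f(0)=\lim_{t\to 0}t\cdot(E,\phi)$ a $\C^*$-fixed point lying in $\R\MD(r,d,s)$. The connected set $f([0,1])\subset\R\MD(r,d,s)$ then joins $(E,\phi)=f(1)$ to $f(0)$. By \cref{Rstar_fixed_equiv_Cstar_fixed}, a real $\C^*$-fixed point is the same thing as an $\R^*$-fixed point, so it remains only to connect, inside $\R\MD(r,d,s)$, every point of $\R\MD(r,d,s)^{\R^*}$ to $\R\mathcal{N}(r,d,s)$.

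The heart of the argument is a downward induction on $\beta$. Let $p\in\R\MD(r,d,s)^{\R^*}$ with underlying Higgs field $\phi$. After the normalization $\beta(\mathcal{N}(r,d))=0$, Simpson's positivity statement gives $\beta(p)>0$ precisely when $p\notin\mathcal{N}(r,d)$, i.e.\ precisely when $\phi\neq 0$. If $\phi=0$, then $p\in\R\MD(r,d,s)\cap\mathcal{N}(r,d)=\R\mathcal{N}(r,d,s)\subset C$ and we are done. If $\phi\neq 0$, I apply \cref{flowrealpart-ss}: there exist $(F,\theta)\in\R\MD(r,d,s)$ with $(F,\theta)\not\simeq p$ and a complete real BB flow $f:\R\mathbf{P}^1\to\R\MD(r,d,s)$ with $f(\infty)=p$ and $f(0)=\lim_{t\to 0}t\cdot(F,\theta)$. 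Its image is connected and contained in $\R\MD(r,d,s)$, so $p$ is joined to the $\C^*$-fixed point $f(0)$. Since $(F,\theta)\not\simeq p=f(\infty)$, the flow of $(F,\theta)$ is non-constant, whence $(F,\theta)$ is not $\C^*$-fixed; Simpson's inequality then gives $\beta(f(0))=\beta^0(F,\theta)<\beta^\infty(F,\theta)=\beta(p)$. Thus $p$ is connected inside $\R\MD(r,d,s)$ to a $\C^*$-fixed point of strictly smaller weight.

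Because $W\subset\Z$ is finite and $\beta\geq 0$ on $\MD(r,d)^{\C^*}$, iterating this step terminates after finitely many iterations at a $\C^*$-fixed point of weight $0$, which by Simpson's positivity lies in $\mathcal{N}(r,d)$, hence in $\R\mathcal{N}(r,d,s)\subset C$. Chaining the connected flow-images shows every $\R^*$-fixed point of $\R\MD(r,d,s)$, and therefore, by the first step, every point of $\R\MD(r,d,s)$, lies in $C$; so $\R\MD(r,d,s)=C$ is connected. The only point requiring care is that each intermediate limit $f(0)$ is again a real $\R^*$-fixed point to which \cref{flowrealpart-ss} can be reapplied; this holds because BB limits are $\C^*$-fixed and \cref{limit-proposition,Rstar_fixed_equiv_Cstar_fixed} keep them inside $\R\MD(r,d,s)^{\R^*}$. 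I expect the genuine difficulty to have been absorbed already into \cref{flowrealpart-ss}; here the work is organizational, namely running the weight descent so that it lands in $\mathcal{N}(r,d)$ rather than at some intermediate fixed locus.
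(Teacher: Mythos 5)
Your proposal is correct and follows essentially the same route as the paper's proof: flow an arbitrary point to an $\R^*$-fixed point via \cref{limit-proposition}, then run the descent on Simpson's weight function $\beta$ using \cref{flowrealpart-ss} and the strict inequality $\beta^0(F,\theta)<\beta^\infty(F,\theta)$ for non-fixed points, terminating in $\R\mathcal{N}(r,d,s)$. The only cosmetic difference is that you phrase the conclusion as membership in the component $C$ containing $\R\mathcal{N}(r,d,s)$ rather than as connecting every point to $\R\mathcal{N}(r,d,s)$ by a chain of flows, which is equivalent.
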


\begin{proof}
Fix $s \in \pi_0(\R \Pic_d(X))$ and consider a point $(E,\phi) \in \R\MD(r,d,s)$. By \cref{limit-proposition}, there is a real BB flow $f : \R \to \R\MD(r,d,s)$ such that $f(1) = (E,\phi)$ and the $\R^*$-fixed Higgs bundle $(E_0,\phi_0) \coloneqq f(0)$ is Galois-invariant and belongs to the same $\R\MD(r,d,s)$ as $(E,\phi)$. We may therefore assume that $(E,\phi) \in \R\MD(r,d,s)^{\R^*}$ to begin with. If $\phi = 0$, then $(E,\phi)$ is already in $\mathcal{N}(r,d,s)$ and we are done. If $\phi \neq 0$, then by \cref{flowrealpart-ss}, there exists a Galois-invariant Higgs bundle $(F,\theta)$ and a complete real BB flow $f: \R\mathbf{P}^1 \to \R\MD(r,d,s)$ such that $f(1) = (F,\theta) \not \simeq (E,\phi)$ and $f(\infty) = (E,\phi)$. In particular, the Higgs bundle $(F_0,\theta_0) \coloneqq f(0)$ also belongs to $\R\MD(r,d,s)$. Let $\beta \coloneqq \beta^0(E,\phi) = \beta^\infty(E,\phi)\in \Z$ be the weight such that $(E,\phi) \in \MD(r,d)_\beta$. Since $(E,\phi) = \lim_{t\to \infty}t\cdot(F,\theta)$, we have $\beta^\infty(F,\theta) = \beta$. And since $(F,\theta) \not \in \R\MD(r,d,s)^{\R^*}$, we have $\beta' \coloneqq \beta^0(F,\theta) < \beta^\infty(F,\theta) = \beta$. So $(F_0,\theta_0) \coloneqq \lim_{t\to 0}t\cdot(F,\theta) \in \MD(r,d)_{\beta'}$ with $\beta' < \beta$. This means that we have connected the Galois-invariant and $\R^*$-fixed Higgs bundle $(E,\phi)\in\R\MD(r,d,s)\cap\MD(r,d)_\beta$ to a Galois-invariant and $\R^*$-fixed Higgs bundle $(F_0,\theta_0) \in\R\MD(r,d,s)\cap\MD(r,d)_{\beta'}$ with $\beta' < \beta$, via a path which is entirely contained in $\R\MD(r,d,s)$. If $\theta_0 = 0$, then $(F_0,\theta_0) \in \R\mathcal{N}(r,d,s)$ and we are done. If $\theta_0 \neq 0$, then we repeat the procedure above to get to a weight $\beta'' < \beta'$. As the set of weights is finite, we eventually get to the minimal weight $\beta'' = 0$ and deduce that all points of $\R\MD(r,d,s)$ can be connected, via a path entirely contained in $\R\MD(r,d,s)$, to a point in $\R\mathcal{N}(r,d,s)$. So, if $\R\mathcal{N}(r,d,s)$ is connected, then so is $\R\MD(r,d,s)$.
\end{proof}

\begin{corollary}
\label{CC_real_locus}
Assume that $\gcd(r,d)=1$. Then, for all $s \in \pi_0(\R \Pic_d(X))$, the space $\R\MD(r,d,s)$ is connected. Therefore, the open-closed decomposition in \cref{fibers-topologicaltype} is the decomposition of $\R\MD(r,d)$ into connected components and the latter are indexed by topological types of Galois-invariant stable Higgs bundles.
\end{corollary}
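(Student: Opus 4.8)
The plan is to obtain \cref{CC_real_locus} as an immediate consequence of \cref{implication-realpart}, feeding into it the one external input that remains: the connectedness of the real locus of the moduli space of semistable bundles in the coprime case. Since the genuine difficulty—the construction and weight analysis of the real Bialynicki-Birula flows—has already been discharged in \cref{downwardflowsfixed}, \cref{flowrealpart-ss} and \cref{implication-realpart}, the work at this stage is purely one of assembly.

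First I would recall that when $\gcd(r,d)=1$ every semistable bundle is stable, the determinant map restricts to a continuous map $\R\mathcal{N}(r,d)\to\pi_0(\R\Pic_d(X))$, and by \cite[Theorem~3.8]{Sch_JSG} each fibre $\R\mathcal{N}(r,d,s)$ is connected (with the convention that the empty fibres, which occur precisely when the Stiefel–Whitney parity constraint fails, are vacuously connected). This is the only result I need to import. I would then apply \cref{implication-realpart} fibrewise: fixing $s\in\pi_0(\R\Pic_d(X))$, the connectedness of $\R\mathcal{N}(r,d,s)$ forces that of $\R\MD(r,d,s)$. Unwinding the mechanism, \cref{limit-proposition} lets one flow any point of $\R\MD(r,d,s)$ down to an $\R^*$-fixed point inside the same fibre, and \cref{flowrealpart-ss} then strictly decreases the Simpson weight $\beta$ while staying inside $\R\MD(r,d,s)$; finiteness of the set of weights brings every point, along a path contained in the fibre, to $\R\mathcal{N}(r,d,s)$, whence connectedness propagates upward.

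Finally I would globalize. As \eqref{fibers-topologicaltype} is an open-closed decomposition of $\R\MD(r,d)$ whose pieces are now each connected, it is by definition the partition into connected components, yielding the bijection $\pi_0(\R\MD(r,d))\simeq\pi_0(\R\Pic_d(X))$; the identification of each component with a moduli space of Galois-invariant stable Higgs bundles of prescribed topological type is then read off from the case analysis of \cref{modular_interp_real_locus_with_real_pts_on_curve} and \cref{modular_interp_real_locus_no_real_pts_on_curve}. I do not expect any real obstacle here: the entire weight of the argument has been absorbed into \cref{implication-realpart}, and in particular into the verification in \cref{downwardflowsfixed} that even the non-geometrically-stable case—where $\tau$ exchanges the two Galois-conjugate stable summands—admits a weight-decreasing \emph{real} flow, so that the reduction to $\R\mathcal{N}(r,d,s)$ never leaves the fibre $c^{-1}(s)$.
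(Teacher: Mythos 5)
Your proposal is correct and follows essentially the same route as the paper: reduce to the connectedness of $\R\mathcal{N}(r,d,s)$ via \cref{implication-realpart} and import that connectedness from \cite[Theorem~3.8]{Sch_JSG}, then read off the component count and modular interpretation from the open-closed decomposition \eqref{fibers-topologicaltype} and the case analysis of \cref{description-fibers}. The only detail the paper adds is the citation of \cite[Theorem~1.2]{BiSc} to cover the genus-one case.
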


\begin{proof}
By \cref{implication-realpart}, it suffices to prove that, for all $s$, the space $\R\mathcal{N}(r,d,s)$ is connected when $\gcd(r,d)=1$. But this is known to hold by \cite[Theorem~3.8 and Corollary~3.9]{Sch_JSG}) when $g\geq 2$ and by \cite[Theorem~1.2]{BiSc} when $g=1$. As a consequence, the open-closed decomposition in \cref{fibers-topologicaltype} coincides with the decomposition of $\R\MD(r,d)$ into connected components and, by the results of \cref{description-fibers}, the latter are indeed indexed by topological types of Galois-invariant stable Higgs bundles.
\end{proof}

We will see in \cref{example-elliptic-real} an example in which the assumption of \cref{implication-realpart} is satisfied without assuming that $\gcd(r,d) = 1$. In fact, it is natural to conjecture that $\R\mathcal{N}(r,d,s)$ is connected for all $r,d,s$. Recall indeed that, in the complex setting, we have a decomposition into connected components 
$
\Pic(X) 
=
\sqcup_{d \in \Z} \Pic_d(X)
$ 
and that, for all $d \in \Z$ and all $r >0$ the moduli space of vector bundles $\mathcal{N}(r,d)$ is connected. Likewise, in the real setting, we have a decomposition into connected components 
$$ 
\R \Pic(X) \qquad
= \quad
\bigsqcup_{d\in\Z,\ s\in\pi_0(\R\Pic_d(X))} \R \Pic_d(X)(s)
$$ 
and we expect the latter to give rise to connected spaces $\R\mathcal{N}(r,d,s)$ even when $\gcd(r,d) \neq 1$. The issue is that, when $\gcd(r,d) \neq 1$, we do not have a good modular interpretation of $\R\mathcal{N}(r,d,s)$  as moduli spaces of real/quaternionic vector bundles with fixed topological type, and this makes the arguments of \cite{BHH},\cite{Sch_JSG} insufficient to prove the connectedness of these spaces.

\subsection{Higgs bundles on real elliptic curves}
\label{example-elliptic-real}

Moduli spaces of Higgs bundles over a real elliptic curve $(X,\si)$ have been studied in \cite{BiCaFraGP}. We show in \cref{proposition-elliptic-curves} that, when $g=1$, we can use \cref{implication-realpart} to study connected components of $\R\MD(r,d)$ even when $\gcd(r,d) > 1$. For the sake of simplicity, we treat the case when $\gcd(r,d)=2$, but a similar reasoning can be applied in the general case.

\begin{prop}
\label{proposition-elliptic-curves}
Let $(X,\si)$ be a real curve of genus $g=1$. Then, for all $r,d$ such that $ \gcd(r,d)=2$ and all $s \in \pi_0(\R \Pic_d(X))$, the moduli space $\R \mathcal{N}(r,d,s)$ and thus $\R \MD(r,d,s)$ is connected.    
\end{prop}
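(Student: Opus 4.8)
The plan is to invoke \cref{implication-realpart}, which reduces the claim to proving that $\R\mathcal{N}(r,d,s)$ is connected for every $s\in\pi_0(\R\Pic_d(X))$; the assertion for $\R\MD(r,d,s)$ then follows at once. To analyse $\R\mathcal{N}(r,d,s)$ on a genus-$1$ curve, I would first recall the Atiyah--Tu description of semistable bundles. Write $r=2r'$ and $d=2d'$; since $\gcd(r,d)=2$ we have $\gcd(r',d')=1$, so on the elliptic curve $X$ every stable bundle of slope $d'/r'$ has rank $r'$ and degree $d'$, and the determinant gives a Galois-equivariant isomorphism $\mathcal{N}(r',d')\cong\Pic_{d'}(X)$. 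As every semistable bundle of rank $r$ and degree $d$ is $S$-equivalent to a direct sum $V_1\oplus V_2$ of two such stable bundles, the assignment $V_1\oplus V_2\mapsto\{V_1,V_2\}$ yields an isomorphism $\mathcal{N}(r,d)\cong\Sym^2\big(\Pic_{d'}(X)\big)$, which is Galois-equivariant because $\si(V_1\oplus V_2)=\si(V_1)\oplus\si(V_2)$. Note that $\Sym^2$ of a smooth curve is a smooth surface, so $\R\mathcal{N}(r,d,s)$ will be a real-analytic manifold.

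Under this isomorphism the determinant morphism $\det:\mathcal{N}(r,d)\to\Pic_d(X)$ corresponds to the tensor map $m:\Sym^2(\Pic_{d'}(X))\to\Pic_{2d'}(X)$, $\{L_1,L_2\}\mapsto L_1\otimes L_2$. I would then check that $m$ is a $\mathbf{P}^1$-fibration: its fibre over $M\in\Pic_{2d'}(X)$ is the set of pairs $\{L,M\otimes L^{-1}\}$, i.e.\ the quotient of the genus-$1$ curve $\Pic_{d'}(X)$ by the involution $L\mapsto M\otimes L^{-1}$, which has fixed points and hence quotient $\mathbf{P}^1$. Since $\Sym^2(\Pic_{d'}(X))$ is smooth and $m$ is proper with all fibres isomorphic to $\mathbf{P}^1$, it is a smooth $\mathbf{P}^1$-fibration, and it is Galois-equivariant because tensor product is. Consequently $\R\mathcal{N}(r,d,s)$ is precisely the real locus of $m^{-1}(s)$, where $s$ denotes a connected component of $\R\Pic_d(X)$, each such component being a circle.

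To conclude I would run a fibration argument on real points. The real locus of a $\mathbf{P}^1$ equipped with a real structure is either empty or a circle, hence always connected, and its isomorphism type (the real versus the pointless real form of $\mathbf{P}^1$) is a discrete invariant varying continuously in the family, so it is locally constant along the connected base $s$ and therefore constant. Thus, applying Ehresmann's theorem to the proper real-analytic submersion $\R m$, the restriction $\R m:\R\mathcal{N}(r,d,s)\to s$ is either empty or a locally trivial bundle with fibre $S^1$ over the circle $s$; in either case $\R\mathcal{N}(r,d,s)$ is connected. By \cref{implication-realpart}, $\R\MD(r,d,s)$ is connected as well.

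I expect the main obstacle to be the passage to real points of the fibration $m$: one must guarantee that over each component $s$ the fibres are of constant type and that $\R m$ is a genuine topological fibre bundle there, rather than having empty fibres over part of a component. The crucial point that rules out such pathologies is that $\Sym^2$ of a smooth genus-$1$ curve has no degenerate fibres, so $m$ is an honest $\mathbf{P}^1$-fibration; the cases $\R X\neq\emptyset$ and $\R X=\emptyset$ are then handled uniformly, with the conjugate-pair divisors $\{V,\si(V)\}$ supplying the real points lying over the norm line bundles $\det V\otimes\si(\det V)$ even when $X$ itself carries no real points.
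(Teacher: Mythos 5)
Your proposal is correct, and it departs from the paper's argument after the common starting point. Both proofs begin with Tu's identification $\mathcal{N}(r,d)\simeq\Sym^2(\Pic_{d'}(X))$ under which $\det$ becomes the tensor (addition) map $m$; but the paper then identifies $\Pic_{d'}(X)$ with $(X,\sigma)$ (or with $(X,\sigma')$ for a twisted involution $\sigma'$ when $\R X=\emptyset$) and proves connectedness of each fibre of $\R m$ by an explicit decomposition of $\R\Sym^2(X)$ into images of connected sets $\Imm(\psi_i)$ (diagonal conjugate pairs $[(x,\sigma(x))]$ and products of components of $\R X$), checking pairwise non-empty intersections, with separate sub-cases for $n=1$, $n=2$, and $\R X=\emptyset$ with $d'$ odd or even. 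You instead exploit the global structure of $m$ as a smooth proper $\mathbf{P}^1$-fibration: each real fibre is the fixed locus of an anti-holomorphic involution of $\mathbf{P}^1$, hence empty or a circle, and $\R\mathcal{N}(r,d,s)$ is then either empty or an $S^1$-bundle over the circle $s$, so connected. This is a genuinely different and more uniform route, and it has the added virtue of extending verbatim to arbitrary $e=\gcd(r,d)$, since the fibres of $\Sym^e(\Pic_{d'}(X))\to\Pic_d(X)$ are linear systems $\mathbf{P}^{e-1}$ whose real loci are again $\emptyset$ or the connected space $\R\mathbf{P}^{e-1}$ (a point the paper only gestures at). The one step you should tighten is the phrase ``a discrete invariant varying continuously'': the correct justification that the type of the real fibre is constant along $s$ is that the image of $\R m$ restricted over $s$ is open (because $m$ is a submersion, hence so is $\R m$ on real points) and closed (by properness), hence all of $s$ or empty; Ehresmann then applies. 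With that substitution the argument is complete, subject to the same convention as the paper that empty fibres (which do occur, e.g.\ over the component $\H$ when $\R X=\emptyset$ and $d'$ is odd) are counted as connected.
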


For all $x,y \in X$, we put $\mathcal{O}_X(x+y)\coloneqq \mathcal{O}_X(x) \otimes \mathcal{O}_X(y)$ and $\mathcal{O}_X(-x) \coloneqq O_X(x)^{-1}$. Fix $x_0 \in X$ and denote by $+_{x_0}: X \to X$ the associated sum operation having $x_0$ as neutral element (the inverse operation will be denoted by $-_{x_0}$). Recall that this is defined so that, for all $x,y \in X$, we have 
 \begin{equation}
    \label{sum-elliptic}
    \mathcal{O}_X\big((x+_{x_0}y)-x_0\big) \simeq \mathcal{O}_X\big(x-x_0\big) \otimes \mathcal{O}_X\big(x-x_0\big).
\end{equation}
Let now $\sigma:X \to X$ be an anti-holomorphic involution. A description of the real locus $\R X$ can be found for instance in \cite{GH}. In particular, we can have $n=0,1$ or $2$. Let $e \coloneqq \gcd(r,d)$ and $d'\coloneqq d/e$. By \cite[Theorem 1]{Tu_elliptic}, there is an isomorphism of real algebraic varieties 
$
(\mathcal{N}(r,d),\sigma) \simeq (\Sym^e(\Pic_{d'}(X)),\sigma) .
$
Through this isomorphism, the determinant map $\mathcal{N}(r,d) \to \Pic_d(X)$ corresponds to the map 
\begin{equation}
\label{map-elliptic-real}
\begin{array}{rcl}
\theta:\Sym^e\big(\Pic_{d'}(X)\big) & \lra & \Pic_d(X)\\
\left[(\mathcal{L}_1,\dots,\mathcal{L}_e)\right] & \lmt & \left[\bigotimes_{i=1}^e \mathcal{L}_i\right]
\end{array}
\end{equation}
so, for all $s \in \pi_0(\R \Pic_d(X))$, $\R \mathcal{N}(r,d,s) = \theta^{-1}(s) \cap \R\Sym^e(\Pic_{d'}(X))$. 
\begin{proof}[Proof of Proposition \ref{proposition-elliptic-curves}]
We distinguish two cases, depending on whether $\R X$ is non-empty or not. Recall that we are assuming that $e \coloneqq \gcd(r,d) = 2$.


\textbf{Case 1.} Assume that $\R X \neq \emptyset$ (i.e. $n>0$). We assume in this case that $x_0 \in \R X$. Since $x_0 \in \R X$, from formula (\ref{sum-elliptic}), we see that $+_{x_0}$ commutes with $\sigma$. For all $d$, we have an isomorphism of real algebraic varieties 
\begin{equation}
\label{isom-ellip}
\begin{array}{rcl}
    (X,\sigma) & \lra & \big(\Pic_{d}(X),\sigma\big) \\
    p & \lmt & \mathcal{O}_X(p-x_0) \otimes \mathcal{O}_X(d x_0) 
\end{array}
\end{equation} 
Through this identification, the map \cref{map-elliptic-real} corresponds to the morphism of real algebraic varieties
$$
\begin{array}{rcl}
  \theta:\Sym^2(X)   &  \lra  & X \\
  \left[(x,y)\right] &  \lmt  & x+_{x_0} y.
\end{array}
$$ 
Recall that we denote by $n>0$ the number of connected components of $\R X$.

\smallskip

\textbf{Sub-case 1.} If $n=1$, then $\R \Pic_{d}(X) = \R X$ is connected. We then need to show that $\R \Sym^2(X)$ is connected too. Notice that, for a point $[(x,y)] \in \R \Sym^2(X)$, we have either $y=\sigma(x)$ or $x,y \in \R X$. Let $\psi_1:X \to \Sym^2(X)$ be $\psi_1(x)=[(x,\sigma(x))]$ and $\psi_2:\Sym^2(\R X) \to \Sym^2(X)$ the canonical inclusion. We deduce that 
$
\R \Sym^2(X)=\Imm(\psi_1) \cup \Imm(\psi_2).
$
Notice now that $\Imm(\psi_1)$ and $\Imm(\psi_2)$ are both connected being the continuous image of a connected space and have non-empty intersection, given by $[(x,x)] \in \Sym^2(X)$ with $x \in \R X$. We deduce that $\R \Sym^2(X)$ is connected.

\smallskip
   
\textbf{Sub-case 2.} If $n = 2$, then $\R \Pic_{d}(X) = \R X$ has two connected components. Put $\R X=X_0 \bigsqcup X_1$, where $X_0$ is the connected component of $x_0$ and $X_1$ is the other connected component. We need to show that $c^{-1}(X_i) \cap \R \Sym^2(X)$ is connected for $i=0,1$. Notice that for $x \in X_1$ and $y \in X_0$, we have $x+_{x_0}y \in X_1$, while for $x,x'\in X_1$ or $y,y'\in X_0$, we have $x+_{x_0}x', y+_{x_0}y'\in X_0$. It is also possible to check that, if $x \in X \setminus \R X$, we have $x+_{x_0}\sigma(x) \in X_0.$ We deduce from it that $c^{-1}(X_1) \cap \R \Sym^2(X)=\Imm(\psi)$, where $\psi:X_0 \times X_1 \to \R \Sym^2(X) $ is given by $\psi((x,y))=[(x,y)]$ and it is therefore connected since it is the continuous image of a connected space. To prove that  $c^{-1}(X_0) \cap \R \Sym^2(X)$ is also connected, recall first that $X \setminus \R X$ has two connected components when $n=2$ (see for instance \cite{GH}), which we denote by $X' \bigsqcup X''=X \setminus \R X$. Moreover, $Y\coloneqq X' \cup X_0 \cup X_1$ is a connected orientable surface with boundary $\partial X = X_0 \sqcup X_1$. Let $\psi_0:\Sym^2(X_0) \to \R \Sym^2(X)$ and $\psi_1:\Sym^2(X_1) \to \R \Sym^2(X)$ be the natural inclusions and let $\psi_2:Y \to \R \Sym^2(X)$ be the map given by $\psi(y)=[(y,\sigma(y))]$. In a similar way to what was previously remarked, we see that $\Imm(\psi_0) \cap \Imm(\psi_2) \neq \emptyset$ and $\Imm(\psi_1) \cap \Imm(\psi_2) \neq \emptyset$. As the subspaces $\Imm(\psi_0)$, $\Imm(\psi_1)$ and $\Imm(\psi_2)$ are all connected and $c^{-1}(X_0) \cap \R \Sym^2(X)=\Imm(\psi_0) \cup \Imm(\psi_1) \cup \Imm(\psi_2)$, we have proven that $c^{-1}(X_0) \cap \R \Sym^2(X)$ is connected.

\smallskip

\textbf{Case 2.} We now treat the case when $\R X=\emptyset$. Note that, in this case, the isomorphism (\ref{isom-ellip}) is not defined over the reals, since we cannot pick $x_0 \in \R X$.  We first distinguish two cases to describe $\R\Pic_m(X)$: $m=2m'+1$ and $m=2m'$.

\smallskip

\textbf{Sub-case 1.} If $m = 2m'+1$, we can still find an isomorphism of real varieties $(X,\sigma) \simeq (\Pic_{2m'+1}(X),\sigma)$. Namely, we can set
    $$
    \begin{array}{rcl}
      \psi:(X,\sigma) & \lra & \big(\Pic_{2m'+1}(X),\sigma\big) \\
        p & \lmt & \mathcal{O}_X(p) \otimes \mathcal{O}_X(m'(x_0+ \sigma(x_0)\big)).
    \end{array}
    $$
    In particular, $\R\Pic_{2m'+1}(X) = \emptyset$ in this case.
\smallskip

\textbf{Sub-case 2.} If $m = 2m'$, let us consider the map 
$$
\begin{array}{rcl}
\psi':X & \lra & \Pic_{2m'}(X)\\
 p &\lmt & \mathcal{O}_X\big(p-x_0\big) \otimes \mathcal{O}_X\Big(m'\big(x_0+\sigma(x_0)\big)\Big).
\end{array}
$$
Then $\psi'$ is an isomorphism of complex algebraic varieties that does not commute with the real structures of $X$ and $\Pic_{2m'}(X)$. However, we can use $\psi$ to transport the real structure $\si : \Pic_{2m'}(X) \to \Pic_{2m'}(X)$ back to $X$ by setting $\si' \coloneqq (\psi')^{-1} \circ \si \circ \psi'$. Then $\psi' : (X,\si') \to (\Pic_{2m'}(X),\si)$ is an isomorphism of real algebraic varieties. Note that $X^{\si'} \neq \emptyset$, since by \cref{modular_interp_real_locus_no_real_pts_on_curve}, $\R\Pic_{2m'}(X)$ is non-empty. As a matter of fact, the explicit expression for $\si'$ is 
\begin{equation}
    \label{sigma'_direct_def}
    \si'(x) \coloneqq \si(x) -_{x_0} \si(x_0).
\end{equation}
If one wants to check directly that $\si'$ defined as in \cref{sigma'_direct_def} satisfies $(\si')^2 = \mathrm{Id}_X$, $\si'(x_0) = x_0$, and $\psi'\circ\si' = \si \circ \psi$, it suffices to observe the following two facts:
\begin{enumerate}
    \item Using \cref{sum-elliptic}, we have, for all $x,y \in X$,
\begin{equation}
\label{equation-sigma}
    \sigma(x)+_{x_0} \sigma(y) = \sigma(x+_{x_0}y)+_{x_0}\sigma(x_0).
\end{equation}
    \item Using \cref{equation-sigma}, we have, for all $x\in X$, 
    $
    \sigma(x) +_{x_0} \sigma(-_{x_0}x) -_{x_0}\sigma(x_0) = \si(x_0).
    $
\end{enumerate}

\smallskip

\noindent We then have two cases to study for the map $\theta: \Sym^2(\Pic_{d'}(X)) \to \Pic_{2d'}(X)$ in \cref{map-elliptic-real}. Recall from \cref{modular_interp_real_locus_no_real_pts_on_curve} that, since $X$ has genus $1$, $\pi_0(\R\Pic_{2m'}(X)) = \{\R,\H\}$ while $\R\Pic_{2m'+1}(X) = \emptyset$.

\begin{itemize}
    \item $d' = 2m'+1$. In this case, $\R\Pic_{d'}(X) = \R X = X^\si =\emptyset$ and we have a surjective morphism $\psi:\R\Pic_{d'}(X) \to \R \Sym^2(\Pic_{d'}(X))$ given by $\psi(x)=[(x,\sigma(x))]$. In particular, $\R \Sym^2(\Pic_{d'})(X)$ is connected, being the image of a connected space by a continuous function. We deduce from it that $\theta^{-1}(\R) \cap \R \Sym^2(\Pic_{d'}(X))=\R\Sym^2(\Pic_{d'}(X))$ is connected and that $\theta^{-1}(\H) \cap \R \Sym^2(\Pic_{d'}(X))$ is empty.
    \item $d'=2m'$. In this case, we have seen above that both $(\Pic_{d'}(X),\sigma)$ and $\Pic_{2d'}(X)$ can be identified with $(X,\sigma')$, where $\sigma'$ is the map defined in \cref{sigma'_direct_def}, and the map $\theta$ is thus identified, as a real map, with 
    $$
    \begin{array}{rcl}
    \theta:\big(\Sym^2(X),\sigma'\big) & \lra & (X,\sigma')\\
    (x,y) & \lmt & x+_{x_0}y .
    \end{array}
    $$ Since $X^{\sigma'}\neq \emptyset$, we have already shown in Case 1 that, for each connected component $X_i$ of $X^{\sigma'}$, the subspace $\theta^{-1}(X_i) \cap \Sym^2(X)^{\sigma'}$ is connected.
\end{itemize}
\end{proof}

We summarize our results on Higgs bundles over real elliptic curves as follows.

\begin{theorem}
    Let $(X,\sigma)$ be a real curve of genus $g=1$. If $\gcd(r,d) = 1$ or $2$, then the open-closed decomposition in \cref{fibers-topologicaltype} is the decomposition of $\R\MD(r,d)$ into connected components. In particular, \cref{Dol_CC_when_RX_ne,Dol_CC_when_RX_empty} still hold in this case.
\end{theorem}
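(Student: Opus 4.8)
The plan is to deduce the statement directly from the connectedness results already at our disposal, treating the two cases $\gcd(r,d)=1$ and $\gcd(r,d)=2$ separately. In each case the only thing to prove is that every fibre $\R\MD(r,d,s) = c^{-1}(s)$ is connected; since \cref{fibers-topologicaltype} is an open-closed decomposition, connectedness of each piece is exactly the assertion that it realises the decomposition of $\R\MD(r,d)$ into connected components.

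First I would dispose of the case $\gcd(r,d)=1$ by invoking \cref{CC_real_locus}. Although the ambient hypothesis of this section is $g\geq 2$, the proof of \cref{CC_real_locus} reduces the connectedness of $\R\MD(r,d,s)$, through \cref{implication-realpart}, to that of $\R\mathcal{N}(r,d,s)$, and the latter is supplied for $g=1$ by \cite[Theorem~1.2]{BiSc}. Hence every $\R\MD(r,d,s)$ is connected and \cref{fibers-topologicaltype} gives the component decomposition.

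For $\gcd(r,d)=2$ I would combine \cref{proposition-elliptic-curves} with \cref{implication-realpart}. Via the identification $\mathcal{N}(r,d)\simeq\Sym^{2}(\Pic_{d'}(X))$ of \cite{Tu_elliptic}, \cref{proposition-elliptic-curves} establishes that $\R\mathcal{N}(r,d,s)$ is connected for every $s\in\pi_0(\R\Pic_d(X))$; and \cref{implication-realpart}, whose proof uses only the weight-function formalism and the real BB flows and never the hypothesis $\gcd(r,d)=1$, then upgrades this to the connectedness of $\R\MD(r,d,s)$. In the subcases where $\R\mathcal{N}(r,d,s)=\emptyset$, the downward-flow argument of \cref{implication-realpart} forces $\R\MD(r,d,s)=\emptyset$ as well, which is harmless. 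So again every fibre is connected and \cref{fibers-topologicaltype} is the decomposition into connected components.

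Finally, for the ``in particular'' clause I would observe that, the fibres of $c$ being exactly the connected components, the number of components of $\R\MD(r,d)$ equals $\#\,\pi_0(\R\Pic_d(X))$---that is, $2^{\,n-1}$ when $\R X$ has $n>0$ components, and the value dictated by the parities of $r,d,g$ when $\R X=\emptyset$---which is precisely the content of \cref{Dol_CC_when_RX_ne,Dol_CC_when_RX_empty}. I do not anticipate any genuine obstacle here, since all the substantive work has been packaged into \cref{proposition-elliptic-curves}, \cref{implication-realpart} and \cref{CC_real_locus}; the assembly is essentially bookkeeping. The one point meriting a line of verification is that the Gross--Harris description of $\pi_0(\R\Pic_d(X))$ recalled in \cref{description-fibers} remains valid at $g=1$, so that the resulting count agrees with the tables of \cref{Dol_CC_when_RX_ne,Dol_CC_when_RX_empty}; this holds because the results of \cite{GH} are stated for all $g\geq 1$.
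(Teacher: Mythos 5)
Your proposal is correct and matches the paper's own (implicit) argument exactly: the case $\gcd(r,d)=1$ is covered by \cref{CC_real_locus}, whose proof already invokes \cite[Theorem~1.2]{BiSc} for $g=1$, and the case $\gcd(r,d)=2$ follows from \cref{proposition-elliptic-curves} combined with \cref{implication-realpart}, which indeed never uses coprimality. The assembly and the remark that the Gross--Harris description of $\pi_0(\R\Pic_d(X))$ holds for $g\geq 1$ are exactly the bookkeeping the paper relies on.
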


\section{Character varieties for Klein surfaces}
\label{CharVarSection}

\subsection{Galois action on Betti moduli spaces}

The nonabelian Hodge correspondence establishes a homeomorphism between the moduli space $\MD(r,0)$ of semi\-stable Higgs bundles of rank $r$ and degree $0$ on $X$ and the moduli space $\MB(r,0) \coloneqq \Hom\big(\pi_1 X, \GL_r(\C)\big) /\negmedspace/ \GL_r(\C)$ of $r$-dimensional linear representations of the fundamental group of $X$ (\cite{Simpson_LocalSystems}). When $d \neq 0$ (\cite{NS,AB,FS}), the discrete group $\pi_1 X$ can be replaced by a central extension of $\pi_1 X$ by $\Z$, denoted by $\pi_d$ in what follows, and we still get a homeomorphism $\MD(r,d) \simeq \MB(r,d)$. More precisely, for all $d \in \Z$, let $L_d$ be a $\mathscr{C}^\infty$ line bundle of degree $d$ on $X$. Such a line bundle is unique up to isomorphism of smooth complex vector bundles and it determines a fundamental group 
$\pi_d \coloneqq \pi_1( L_d \setminus \{0\})$
where $\{0\}$ denotes the image of the zero section of $L_d$. The space $L_d \setminus \{0\}$ has the homotopy type of a circle bundle over $X$, making $\pi_d$ a central extension
$0 \longrightarrow \Z \longrightarrow \pi_d \longrightarrow \pi_1 X \longrightarrow 1$
whose isomorphism class is entirely determined by $d \in H^2(X,\Z) \simeq H^2(\pi_1 X, \Z)$. A presentation of $\pi_d$ by generators and relations has been found by Seifert (see for instance \cite{FS}):
$$\pi_d = \left< a_1, b_1, ..., a_g, b_g, c \ \Big|\ \left(\prod_{i=1}^g [a_i, b_i]\right) = c^d, \forall i, [a_i,c] = [b_i, c] = 1 \right>.$$
In particular, $\pi_0 \simeq \Z \times \pi_1 X$ is the trivial central extension of $\pi_1 X$ by $\Z$. To extend the nonabelian Hodge correspondence to the case when $d \neq 0$, the relevant character variety is
$
\MB(r,d) \coloneqq \Hom^\Z \big(\pi_d, \GL_r(\C)\big) /\negmedspace/ \GL_r(\C)
$
where $\Hom^\Z (\pi_d, \GL_r(\C))$ is the space of group morphisms $\rho : \pi_d \to \GL_r(\C)$ satisfying $\rho(c) = e^{i \frac{2\pi}{r}} I_r$, meaning that the following diagram commutes.
\begin{equation}\label{central_ext_rep}
    \begin{tikzcd}
    0 \ar[r] & \Z \ar[r] \ar[d, "n \mapsto e^{i\frac{n2\pi}{r}} I_r"] & \pi_d \ar[r] \ar[d, "\rho"] & \pi_1 X \ar[r] \ar[d] & 1 \\
    0 \ar[r] & \C^* \ar[r] & \GL_r(\C) \ar[r] & \mathbf{PGL}_r(\C) \ar[r] & 1
    \end{tikzcd}
\end{equation}
When $d= 0$, we have $\Hom^\Z(\Z \times \pi_1 X, \GL_r(\C)) \simeq \Hom(\pi_1 X, \GL_r(\C))$, so $\MB(r,0)$ is indeed the usual Betti moduli space in this case. Recall that the orbifold fundamental group of $X/\Gamma$ fits into the short exact sequence
$1 \longrightarrow \pi_1 X \longrightarrow \pi_1 X/\Gamma \longrightarrow \Gamma \longrightarrow 1$
where $\Gamma \coloneqq \Gal(\C/\R) = \{1,\sigma\}$. In particular, there is a well-defined group morphism $\Gamma \to \Out(\pi_1 X)$. If we let $\Gamma$ act on $\Z = Z(\pi_d)$ via $\sigma(n) = -n$, then, since $\pi_d$ is a central extension of $\pi_1 X$ by $\Z$, we can construct, for each $d \in \Z = H^2(\pi_1 X, Z(\pi_d))$, a group morphism $\Gamma \to \Out(\pi_d)$. Combining this group morphism $\Gamma \to \Out(\pi_d)$ with the Galois action $\Gamma \to \Aut(\GL_r(\C))$ given by $\sigma(g) = \overline{g}$, we get a group morphism
\begin{equation}\label{outer_Galois}
\Gamma \longrightarrow \Out\big(\pi_d\big) \times \Out\big(\GL_r(\C)\big)
\end{equation}
which we can use to induce an action of $\Gamma$ on $\MB(r,d)$.

\begin{prop}\label{Galois_action_Betti}
    The group morphism \eqref{outer_Galois} induces an action of $\Gamma$ on the Betti moduli space
    $\MD(r,d) \coloneqq \Hom^\Z(\pi_d, \GL_r(\C)) /\negmedspace/ \GL_r(\C)$.
\end{prop}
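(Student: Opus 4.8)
The plan is to lift the purely \emph{outer} data encoded in \eqref{outer_Galois} to honest transformations of the representation space, define the resulting map on $\Hom^\Z(\pi_d,\GL_r(\C))$, and then show that every choice made in the lift is washed out upon passing to the GIT quotient by $\GL_r(\C)$. Concretely, I would first choose a genuine automorphism $\Psi \in \Aut(\pi_d)$ representing the image of $\sigma$ under $\Gamma \to \Out(\pi_d)$; on the second factor the class of $\sigma$ in $\Out(\GL_r(\C))$ is already represented by the honest involution $g \mapsto \overline{g}$, so no further choice is needed there. One then sets, on representations,
\[
\beta_\sigma(\rho) \coloneqq \overline{\rho \circ \Psi}, \qquad \beta_\sigma(\rho)(x) = \overline{\rho(\Psi(x))},
\]
together with $\beta_1 = \mathrm{Id}$. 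Since $\Gamma = \{1,\sigma\}$ has order two, exhibiting a $\Gamma$-action amounts to checking that $\beta_\sigma$ descends to a well-defined involution of $\MB(r,d)$.

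The first thing to verify is that $\beta_\sigma$ preserves the subspace $\Hom^\Z(\pi_d,\GL_r(\C))$ cut out by the condition $\rho(c) = e^{i2\pi/r} I_r$ of \eqref{central_ext_rep}. This is exactly where the choice to let $\Gamma$ act on $Z(\pi_d) = \Z\langle c\rangle$ by $\sigma(n) = -n$ is used: because the centre is characteristic and inner automorphisms fix it pointwise, every lift $\Psi$ restricts to $c \mapsto c^{-1}$ on $Z(\pi_d)$, whence $\beta_\sigma(\rho)(c) = \overline{\rho(c)^{-1}} = \overline{e^{-i2\pi/r}I_r} = e^{i2\pi/r}I_r$. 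So $\beta_\sigma$ maps $\Hom^\Z$ into itself.

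The central step is descent to the quotient, which simultaneously disposes of all ambiguities in the chosen lifts. If $\rho' = g\rho g^{-1}$, then $\beta_\sigma(\rho') = \overline{g}\,\beta_\sigma(\rho)\,\overline{g}^{-1}$, so $\beta_\sigma$ carries $\GL_r(\C)$-orbits to $\GL_r(\C)$-orbits; and since precomposition by an automorphism of $\pi_d$ and postcomposition by $g\mapsto\overline{g}$ both preserve semisimplicity and send closed orbits to closed orbits, the map descends to the GIT quotient, where it is anti-holomorphic rather than holomorphic — precisely the expected behaviour of a real structure. Independence of the chosen lift is the same computation: two lifts of the class of $\sigma$ in $\Out(\pi_d)$ differ by an inner automorphism $\mathrm{inn}_\gamma$, and $\rho\circ(\Psi\circ\mathrm{inn}_\gamma)$ is conjugate to $\rho\circ\Psi$ by $\rho(\Psi(\gamma)) \in \GL_r(\C)$, hence defines the same point; likewise, modifying the lift $g\mapsto\overline{g}$ by an inner automorphism of $\GL_r(\C)$ only conjugates the result.

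Finally I would check involutivity. Iterating the formula gives $\beta_\sigma^2([\rho]) = [\rho\circ\Psi^2]$, and $\Psi^2$ is a lift of $\sigma^2 = 1 \in \Out(\pi_d)$, hence an inner automorphism of $\pi_d$; by the computation of the previous step, $\rho\circ\Psi^2$ is conjugate to $\rho$, so $\beta_\sigma^2 = \mathrm{Id}$ and $\beta_\sigma$ is in particular a bijection. I expect this last point, together with the descent, to be the only genuinely delicate part: the subtlety is exactly that $\sigma_*$ is not an honest involution and is only defined up to inner automorphism (as already flagged for $\sigma_* : \pi_1(X,x) \to \pi_1(X,\sigma(x))$), so involutivity cannot hold verbatim at the level of $\Hom^\Z$ and is recovered only after quotienting by $\GL_r(\C)$. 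This is precisely why the construction is naturally phrased through $\Out(\pi_d)$ and the character variety rather than through a single honest automorphism of $\pi_d$.
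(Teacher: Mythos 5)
Your proposal is correct and follows essentially the same route as the paper's (much terser) proof: define the action on representations via a lift of the outer class, check that the constraint $\rho(c)=e^{i2\pi/r}I_r$ is preserved because the inversion $c\mapsto c^{-1}$ on $Z(\pi_d)$ cancels against complex conjugation on $e^{i2\pi/r}I_r$, and observe that all choices of lift disappear in the quotient by $\GL_r(\C)$. You simply spell out in full the descent, lift-independence, and involutivity checks that the paper leaves implicit.
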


\begin{proof}
The group morphism \eqref{outer_Galois} induces an action of $\Gamma \coloneqq \Gal(\C/\R)$ on the orbit space $\Hom(\pi_d,\GL_r(\C)\big) / \GL_r(\C)$, taking closed orbits to closed orbits. Moreover, the condition $\rho(c) = e^{i\frac{2\pi}{r}} I_r$ is preserved by the Galois action because $\sigma\in\Gamma$ acts on $c\in Z(\pi_d)$ via $\sigma(c) = c^{-1}$ and on $e^{i\frac{2\pi}{r}}I_r\in\GL_r(\C)$ via $\sigma(e^{i\frac{2\pi}{r}}I_r) = \overline{e^{i\frac{2\pi}{r}}}I_r = e^{-i\frac{2\pi}{r}}I_r$.
\end{proof}

\subsection{Real locus}

When $\gcd(r,d) = 1$, the Betti moduli space $\MB(r,d)$ is in fact the orbit space 
$\MB(r,d) = \Hom^\Z\big( \pi_d, \GL_r(\C)\big) / \GL_r(\C)$
and in this case we can count the connected components of its real locus $\R\MB(r,d)$. Indeed, for all $r$ and $d$, the nonabelian Hodge correspondence
$\MD(r,d) \simeq \MB(r,d)$
is $\Gamma$-equivariant with respect to the Galois actions on each side, defined respectively in Equation \eqref{Galois_action_Dolbeault} and Proposition \ref{Galois_action_Betti} (see for instance \cite[\S 3]{ALS}). So, when $\gcd(r,d) = 1$, we can use the results of Section \ref{description-fibers} to count the connected components of $\R\MB(r,d)$, which is the content of Theorem \ref{CC_MBrd}.

\begin{theorem}\label{CC_MBrd}
Let $\Gamma \coloneqq \Gal(\C/\R)$ act on the Betti moduli space $\MB(r,d)$ via the action induced by the group morphism \eqref{outer_Galois}.
\begin{enumerate}
    \item If $\R X \neq \emptyset$ and $\gcd(r,d) = 1$, then $\R\MB(r,d)$ has $2^{n-1}$ connected components, where $n>0$ is the number of connected components of $\R X$.
    \item If $\R X = \emptyset$ and $\gcd(r,d) = 1$, then:
    \begin{itemize}
        \item If $d = 2 d'$ and $r(g-1) \equiv 1\ (\mathrm{mod}\ 2)$, then $\R \MB(r,d)$ has one connected component.
        \item If $d = 2 d'$ and $r(g-1) \equiv 0\ (\mathrm{mod}\ 2)$, then $\R \MB(r,d)$ has two connected components.
        \item If $d = 2 d'+1$ and $r(g-1) \equiv 1\ (\mathrm{mod}\ 2)$, then $\R \MB(r,d)$ has one connected component.
        \item If $d = 2 d'+1$ and $r(g-1) \equiv 0\ (\mathrm{mod}\ 2)$, then $\R \MB(r,d)$ is empty.
    \end{itemize}
\end{enumerate}
\end{theorem}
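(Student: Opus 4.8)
The plan is to deduce the statement formally from the count of connected components of $\R\MD(r,d)$ established in \cref{Dol_CC_when_RX_ne,Dol_CC_when_RX_empty}, transporting it to the Betti side via the Galois-equivariant nonabelian Hodge correspondence. First I would use the fact, recalled above and proved in \cite[\S 3]{ALS}, that the homeomorphism $\MD(r,d) \simeq \MB(r,d)$ intertwines the real structure \eqref{Galois_action_Dolbeault} with the real structure on $\MB(r,d)$ induced by \eqref{outer_Galois} through \cref{Galois_action_Betti}. Any $\Gamma$-equivariant homeomorphism restricts to a homeomorphism of fixed-point loci, so this yields a homeomorphism $\R\MD(r,d) \simeq \R\MB(r,d)$ and hence a bijection $\pi_0(\R\MD(r,d)) \simeq \pi_0(\R\MB(r,d))$. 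In particular, the two real loci have the same number of connected components, which reduces the theorem to the Dolbeault count. The only subtlety here is that the Galois action on $\MB(r,d)$ is a priori defined only through outer automorphisms, since $\si_*$ need not be an involution; but this is exactly what \cref{Galois_action_Betti} takes care of, so no additional work is required.

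Part (1) is then immediate: when $\R X \neq \emptyset$ and $\gcd(r,d)=1$, \cref{Dol_CC_when_RX_ne} gives that $\R\MD(r,d)$ has exactly $2^{n-1}$ connected components, hence so does $\R\MB(r,d)$.

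For part (2), the only work is to match the four cases phrased in terms of the parities of $d$ and of $r(g-1)$ against the five cases of \cref{Dol_CC_when_RX_empty}, which are stated using the parities of $r$, $d$ and $g$ separately. Here the hypothesis $\gcd(r,d)=1$ is essential, as it determines the parity of $r$ from that of $d$. Concretely, when $d$ is even, coprimality forces $r$ odd, so $r(g-1)\equiv g-1 \pmod 2$; thus $r(g-1)$ odd matches $g$ even, i.e.\ case (1) of \cref{Dol_CC_when_RX_empty} with one component, and $r(g-1)$ even matches $g$ odd, i.e.\ case (2) with two components. When $d$ is odd, $r(g-1)$ odd forces $r$ odd and $g$ even, i.e.\ case (3) with one component, whereas $r(g-1)$ even means either $r$ even (case (5)) or $r$ odd and $g$ odd (case (4)), both of which give an empty real locus. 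This reproduces precisely the four assertions of part (2).

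There is no genuine obstacle in this argument: all the topological content is already contained in the Dolbeault component count, and Galois-equivariance of the nonabelian Hodge correspondence transports it verbatim to the Betti side. The only point that demands attention is the parity bookkeeping in part (2), where one must consistently use $\gcd(r,d)=1$ to pin down the parity of $r$ before comparing with the case division of \cref{Dol_CC_when_RX_empty}.
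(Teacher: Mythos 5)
Your proposal is correct and follows essentially the same route as the paper: both transport the Dolbeault component count to the Betti side via the $\Gamma$-equivariance of the nonabelian Hodge correspondence, the only cosmetic difference being that the paper transports the map $c$ and invokes \cref{CC_real_locus} together with the case analysis of \cref{description-fibers}, whereas you cite \cref{Dol_CC_when_RX_ne,Dol_CC_when_RX_empty} directly, which package the same information. Your parity bookkeeping matching the four cases of part (2) against the five cases of \cref{Dol_CC_when_RX_empty} is accurate.
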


\begin{proof}
By $\Ga$-equivariance of the nonabelian Hodge correspondence $\MD(r,d) \simeq \MB(r,d)$, we can view the map $c : \R\MD(r,d) \to \pi_0(\R\Pic_d(X))$ from \eqref{obstruction_map} as a map $c : \R\MB(r,d) \to \pi_0(\R\Pic_d(X))$.
We then have an open-closed decomposition 
$$\R\MB(r,d) = \bigsqcup_{s\in \pi_0(\R\Pic_d(X))} \R\MB(r,d,s),$$
analogous to \eqref{fibers-topologicaltype}, with $\R\MB(r,d,s) \coloneqq c^{-1}(s)$. Since $\gcd(r,d) =1$, Theorem \ref{CC_real_locus} shows that the subspaces $\R\MB(r,d,s) \simeq \R\MD(r,d,s)$ are all connected. The count of non-empty such subspaces then follows immediately from the case analysis conducted in Section \eqref{description-fibers}. Note that the case when both $r = 2r'$ and $d=2d'$ also occurs in Sections \ref{modular_interp_real_locus_with_real_pts_on_curve} and \ref{modular_interp_real_locus_no_real_pts_on_curve}, but does not occur here because we are assuming that $\gcd(r,d) =1$.
\end{proof}

We can now give a representation-theoretic modular interpretation of the connected components $\R\MB(r,d,s)$. Recall first that, given a group morphism $\phi : \Gamma \to \Out(\pi_d)$, the extensions of $\Gamma$ by $\pi_d$ inducing this morphism are parameterized by $H^2(\Gamma, Z(\pi_d))$, where the action of $\Gamma$ on $Z(\pi_d)$ is the one induced by $\phi$. Combined with the next lemma, this shows that the integer $d\in\Z$ uniquely determines such an extension.

\begin{lem}\label{real_Seifert_group}
    Let $\Gamma\coloneqq \Gal(\C/\R)=\{1,\sigma\}$ act on the centre $Z(\pi_d) \simeq \Z$ of $\pi_d$ via the involution $\sigma(n) = -n$. Then $H^2(\Gamma; Z(\pi_d)) = \{0\}$.
\end{lem}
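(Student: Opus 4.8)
The plan is to compute $H^2(\Gamma; Z(\pi_d))$ directly, exploiting that $\Gamma = \{1,\sigma\}$ is the cyclic group of order $2$, whose group cohomology with coefficients in any $\Gamma$-module $M$ is $2$-periodic in positive degrees. Writing $N \coloneqq 1 + \sigma \in \Z[\Gamma]$ for the norm element, the standard computation for a finite cyclic group gives, for every $k \geq 1$, the isomorphisms $H^{2k}(\Gamma; M) \simeq M^{\Gamma}/NM$ and $H^{2k-1}(\Gamma; M) \simeq \ker(N)/(\sigma-1)M$. In particular $H^2(\Gamma; M) \simeq M^{\Gamma}/NM$, where $M^{\Gamma} = \ker(\sigma - 1)$. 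This identity can be read off the $2$-periodic free resolution of $\Z$ over $\Z[\Gamma]$,
$$
\cdots \xrightarrow{\ \sigma-1\ } \Z[\Gamma] \xrightarrow{\ N\ } \Z[\Gamma] \xrightarrow{\ \sigma-1\ } \Z[\Gamma] \xrightarrow{\ \varepsilon\ } \Z \longrightarrow 0,
$$
by applying $\Hom_{\Z[\Gamma]}(-,M) \simeq M$ in each degree, which turns the complex into $M \xrightarrow{\sigma-1} M \xrightarrow{N} M \xrightarrow{\sigma-1} M \to \cdots$ and identifies $H^2$ with $\ker(\sigma-1)/N M$.

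Next I would substitute $M = Z(\pi_d) \simeq \Z$ with the prescribed sign action $\sigma(n) = -n$. On this module the operator $\sigma - 1$ acts as multiplication by $-2$, while the norm $N = 1 + \sigma$ acts by $n \mapsto n + (-n) = 0$. Since $\Z$ is torsion-free, $\ker(\sigma - 1) = \ker(-2\,\cdot\,) = \{0\}$, so $M^{\Gamma} = 0$; and $NM = 0$ as well. Hence $H^2(\Gamma; Z(\pi_d)) \simeq M^{\Gamma}/NM = \{0\}$, as claimed.

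There is essentially no obstacle here beyond bookkeeping; the only point requiring genuine care is that the $\Gamma$-action is the nontrivial sign action and not the trivial one. This is precisely what forces the group to vanish: for the trivial action one would instead obtain $\sigma - 1 = 0$ and $N = 2$, hence $H^2(\Gamma; \Z) \simeq M^{\Gamma}/NM = \Z/2\Z$. It is therefore the sign action, arising from $\sigma(c) = c^{-1}$ on the centre $Z(\pi_d)$ of $\pi_d$, that guarantees the uniqueness of the real Seifert extension determined by $d$.
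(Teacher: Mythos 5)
Your proof is correct and follows essentially the same route as the paper, which also computes $H^2(\Gamma;\Z) = \Z^\Gamma/\{n+\sigma(n)\} = \{0\}$ using the standard identification of $H^2$ of a cyclic group of order two with invariants modulo the image of the norm; you simply spell out the periodic resolution behind that identification. The computation $(\sigma-1) = -2\cdot$ and $N = 0$ on the sign module is exactly right.
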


\begin{proof}
   By definition of $H^2$ in group cohomology, for $\Gamma = \{1, \sigma\}$ acting on $\Z$ via $\sigma(n) = -n$, we have $H^2(\Gamma,\Z) = \Z^\Gamma / \{n + \sigma(n)\ |\ n \in \Z\} = \{0\}.$
\end{proof}

So let us call $\pi_d^\Ga$ the unique extension 
$1 \lra \pi_d \lra \pi_d^\Ga \lra \Ga \lra 1$
determined by Lemma \ref{real_Seifert_group}. The abstract group $\pi_d^\Ga$ thus defined also fits in the following short exact sequence
\begin{equation}\label{realSeifertGroup_as_non_central_ext}
    0 \lra \Z \lra \pi_d^\Ga \lra \pi_1(X/\Ga) \to 1,
\end{equation}
which is a non-central extension of $\pi_1(X/\Ga)$ by $\Z$ (since $\pi(X/\Ga)$ acts non-trivially on $\Z$). Geometrically, extensions of the form \eqref{realSeifertGroup_as_non_central_ext} arise when fixing a real structure on the smooth complex line bundle $L_d$. For fixed $d$, isomorphism classes of such extensions are classified by the real invariants of $L_d$ but, as an abstract group, $\pi_d^\Ga$ is entirely determined by $d$ (see \cite[Remark 2.8]{Sch_JDG}). Since $\gcd(r,d) =1$, we can decompose $\R\MB(r,d)$ as in \eqref{decompositionintro1} and write
$\R\MB(r,d) = \MB^\R(r,d) \sqcup \MB^\H(r,d)$.
By \cite[Theorem 4.5]{Sch_JDG}, the points in $\MB^\R(r,d)$ correspond to so-called real representations of $\pi_d$, meaning group morphisms $\rho : \pi_d^\Ga \to \GL_r(\C) \rtimes_{\si} \Ga$ that make the diagram \ref{central_ext_rep} as well as the following commute.
\begin{equation}\label{real_rep}
    \begin{tikzcd}
    1 \ar[r] & \pi_d \ar[r] \ar[d] & \pi_d^\Ga \ar[r] \ar[d, "\rho"] & \Ga \ar[r] \ar[-,double line with arrow={-,-}]{d} & 1 \\
    1 \ar[r] & \GL_r(\C) \ar[r] & \GL_r(\C) \rtimes_{\si} \Ga \ar[r] & \Ga \ar[r] & 1
    \end{tikzcd}
\end{equation}
Here, in the semi-direct product $\GL_r(\C) \rtimes_{\si} \Ga$, the Galois group $\Ga$ acts on $\GL_r(\C)$ via $\si(g) = \ov{g}$. The quaternionic case $\MB^\H(r,d)$ is similar except that, this time, one represents $\pi_d$ in the (unique) non-trivial extension $\GL_r(\C) \times_\H \Ga$ determined by the element $\H$ in the Brauer group $\mathrm{Br}(\R) = H^2(\Ga,Z(\GL_r(\C)) \simeq \Z/2\Z$. Taking into account the equivalence relation between such representations, we obtain the following result.

\begin{theorem}\label{orbifold_Betti}
    Let $\Gamma \coloneqq \Gal(\C/\R)$ act on the Betti moduli space $\MB(r,d)$ via the action induced by the group morphism \eqref{outer_Galois} and assume that $\gcd(r,d) = 1$. Then the real locus of the Betti moduli space admits the following open-closed decomposition.
    $$
    \R\MB(r,d) = \Hom^\Z_\Ga\big(\pi_d^\Ga, \GL_r(\C) \rtimes_{\si} \Gamma\big) / \GL_r(\C) \sqcup \Hom^\Z_\Ga\big(\pi_d^\Ga, \GL_r(\C) \times_\H \Gamma\big) / \GL_r(\C)
    $$
    where $\Hom^\Z_\Ga(\pi_d^\Ga, \GL_r(\C) \rtimes_{\si} \Gamma)$ means the set of group morphisms $\rho : \pi_d^\Ga \to \GL_r(\C) \rtimes_{\si} \Ga$ that make diagram \eqref{real_rep} commute and such that $\rho|_{\pi_d}$ makes diagram \eqref{central_ext_rep} commute, and similarly for $\Hom^\Z_\Ga(\pi_d^\Ga, \GL_r(\C) \times_\H \Gamma)$.
\end{theorem}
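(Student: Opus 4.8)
The plan is to transport the open-closed decomposition of the Dolbeault real locus to the Betti side and then identify each summand with the appropriate orbit space of lifts to an extension of $\pi_d^\Ga$. Because $\gcd(r,d)=1$, every semistable Higgs bundle of rank $r$ and degree $d$ is stable, so the nonabelian Hodge correspondence restricts to a homeomorphism $\R\MB(r,d)\simeq\R\MD(r,d)=\R\MD^{st}(r,d)$. By \cref{realHiggsbundlemoduli} each stable Galois-fixed point carries a real or a quaternionic structure but not both, and since the sign of $\tau^2=\pm\mathrm{Id}$ is locally constant, the decomposition \cref{decompositionDol} is open-closed; transporting it through the $\Ga$-equivariant correspondence yields the open-closed decomposition $\R\MB(r,d)=\MB^\R(r,d)\sqcup\MB^\H(r,d)$ already recorded in \eqref{decompositionintro1}.

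Next I would set up the representation-theoretic dictionary. Fix a lift $\tilde\si\in\pi_d^\Ga$ of the generator of $\Ga$ and put $w\coloneqq\tilde\si^2\in\pi_d$, so that $\si_*^2=\mathrm{Int}(w)$ on $\pi_d$. A class $[\rho]\in\MB(r,d)$ is Galois-fixed iff $\ov{\rho\circ\si_*}$ is conjugate to $\rho$, i.e.\ iff there is $g_\si\in\GL_r(\C)$ with $\ov{\rho(\si_*(\gamma))}=g_\si\,\rho(\gamma)\,g_\si^{-1}$ for all $\gamma\in\pi_d$. Setting $\rho(\tilde\si)\coloneqq(\ov{g_\si},\si)$ extends $\rho|_{\pi_d}$ to a morphism into an extension of $\Ga$ by $\GL_r(\C)$ making diagram \eqref{real_rep} commute, and consistency forces $\rho(\tilde\si)^2=(\ov{g_\si}\,g_\si,1)$ to equal $\rho(w)$. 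By Schur's lemma $\ov{g_\si}\,g_\si$ is a scalar multiple of $\rho(w)$, and after normalizing the scale this multiple is a sign living in $H^2(\Ga,Z(\GL_r(\C)))=\mathrm{Br}(\R)\simeq\Z/2\Z$. The trivial sign places $\rho$ in the split extension $\GL_r(\C)\rtimes_\si\Ga$; for the real summand this recovers \cite[Theorem~4.5]{Sch_JDG}, which I would cite for the identification $\MB^\R(r,d)=\Hom^\Z_\Ga(\pi_d^\Ga,\GL_r(\C)\rtimes_\si\Ga)/\GL_r(\C)$.

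The quaternionic summand follows by the identical argument with the non-trivial sign, for which $\rho$ lands in the unique non-split extension $\GL_r(\C)\times_\H\Ga$, yielding $\MB^\H(r,d)=\Hom^\Z_\Ga(\pi_d^\Ga,\GL_r(\C)\times_\H\Ga)/\GL_r(\C)$. Here \cref{real_Seifert_group} is what guarantees that the extension $\pi_d^\Ga$ of $\Ga$ by $\pi_d$ is unique, so the $\pi_d$-level contributes nothing to the obstruction and the entire Brauer ambiguity sits on the $\GL_r(\C)$-factor; combining the two summands then gives the claimed decomposition.

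Finally I would check that the equivalence relation is the correct one, namely that two lifts define the same real point exactly when they differ by $\GL_r(\C)$-conjugation. For irreducible $\rho$ the conjugator $g_\si$ is determined up to a scalar by Schur's lemma, and the normalization $\ov{g_\si}\,g_\si=\rho(w)$ cuts this down to a unit scalar $\lambda\in U(1)$; conjugating $\rho$ by a scalar matrix multiplies $g_\si$ by a unit scalar ranging over all of $U(1)$, so the residual ambiguity of the lift is exactly absorbed by $\GL_r(\C)$-conjugation and the correspondence descends to a bijection of orbit spaces. I expect the main obstacle to be precisely this bookkeeping of equivalence relations together with the matching of sign conventions --- in particular, verifying that the quaternionic sign $\tau^2=-\mathrm{Id}$ on the Dolbeault side corresponds to the non-split extension on the Betti side --- rather than any conceptually new input, since the core of the representation-theoretic dictionary is supplied by \cite{Sch_JDG}.
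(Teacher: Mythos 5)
Your proposal is correct and follows essentially the same route as the paper: the open-closed real/quaternionic decomposition \eqref{decompositionintro1} transported through the $\Ga$-equivariant nonabelian Hodge correspondence, the identification of the real summand via \cite[Theorem~4.5]{Sch_JDG}, the Brauer-group sign $\mathrm{Br}(\R)\simeq\Z/2\Z$ distinguishing the split extension from $\GL_r(\C)\times_\H\Ga$, and \cref{real_Seifert_group} for the uniqueness of $\pi_d^\Ga$. The only difference is that you spell out the Schur-lemma and equivalence-relation bookkeeping that the paper delegates entirely to the citation.
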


Note that the coprimality assumption $\gcd(r,d) = 1$ implies that $\MB^{\H}(r,d) = \emptyset$ when $\R X \neq \emptyset$. So by Theorem \ref{CC_MBrd}, the space of real representations $\MB^\R(r,d)$ has $2^{n-1}$ connected components in this case. By  \cite[Proposition 2.9]{Sch_JDG}, we can then view the real Betti moduli space $\MB^\R(r,d)$ as the moduli space of $\Ga$-equivariant representations $\rho : \pi_d \to GL_r(\C)$. More precisely, by choosing a real point of $L \setminus \{0\}$ as a base point for $\pi_d \coloneqq \pi_1(L \setminus \{0\})$, we can set up a homeomorphism
$$
 \Hom^\Z_\Ga\big(\pi_d^\Ga, \GL_r(\C) \rtimes_{\si} \Gamma\big) / \GL_r(\C) \simeq  \Hom^\Z\big(\pi_d, \GL_r(\C)\big)^\Ga / \GL_r(\R)
$$
because the choice of such a base point determines an isomorphism of extensions $\pi_d^\Ga \simeq \pi_d \rtimes_{\si} \Ga$, allowing us to replace representations of the orbifold fundamental group $\pi_d^\Ga$ by $\Ga$-equivariant representations of $\pi_d$ (see \cite[Proposition 2.9]{Sch_JDG}). The point is that this gives a more direct description of the fibres of the map $c : \R\MB(r,d) \to \pi_0(\R\Pic_d(X))$, by looking at the sign of $\det(\rho(\tilde{c}))$ for $c\in \pi_1 X$ a $\sigma$-fixed loop in $X$ and $\tilde{c}\in\pi_d$ an arbitrary lift of $c$ to $\pi_d$ (see \cite[Theorem 4.8]{Sch_JDG}).

\subsection{An application to twisted complex character varieties}
\label{twistedCharVarSection}
We note that the real Betti moduli space
$
\Hom^\Z_\Ga\big(\pi_d^\Ga, \GL_r(\C) \rtimes_{\si} \Gamma\big) / \GL_r(\C)
$
does not admit a natural structure of algebraic variety, because $\Ga$ acts on $\GL_r(\C)$ via $\si(g) = \ov{g}$. However, when $\gcd(r,d) = 1$ and $\R X \neq \emptyset$, it is homeomorphic to the affine complex algebraic variety 
$
\Hom^\Z_\Ga\big(\pi_d^\Ga, \GL_r(\C) \rtimes_{\widetilde{\si}} \Gamma\big) / \GL_r(\C)  
$
where $\Ga$ acts on $\GL_r(\C)$ via the Cartan involution $\widetilde{\si}(g) = \,^t{g}^{-1}$. To see it, it suffices to notice that the real structures
$\sigma (E,\phi) = (\sigma^*(\overline{E}),\sigma^*(\overline{\phi}))$ and $\widetilde{\si} (E,\phi) = (\sigma^*(\overline{E}),-\sigma^*(\overline{\phi}))$ of $\MD(r,d)$ are conjugate via the order 4 automorphism
\begin{equation}\label{conjugating_the_Galois_action}
\gamma: 
\begin{array}{rcl}
\MD(r,d) & \longrightarrow & \MD(r,d) \\
(E,\phi) & \longmapsto & (E,i\phi) .
\end{array}
\end{equation}
of the Dolbeault moduli space. Indeed, 
\begin{equation}\label{conjugating_the_Galois_action_pf}
\big( \widetilde{\si} \circ \gamma \big) \big( E,\phi \big) = \big( \si^*(\ov{E}), -\si^*(\ov{i\phi}) \big) =  \big( \si^*(\ov{E}), i \si^*(\ov{\phi}) \big) = \big( \gamma \circ \si \big) \big( E,\phi \big).
\end{equation}
Since $\widetilde{\si}\circ\gamma = \gamma \circ \si$, the map $\gamma$ induces an isomorphism of real algebraic varieties
$\MD(r,d)^\si \simeq \MD(r,d)^{\widetilde{\si}}$.
Note now that, on the Betti side of the nonabelian Hodge correspondence, the real structure $\si(E,\phi) = (\si^*(\ov{E}), \si^*(\ov{\phi}))$ corresponds to the anti-holomorphic involution $\beta_{\si}([\rho]) = [\si\circ\rho\circ\si_*]$
where $\si_* : \pi_d \to \pi_d$ is defined up to inner automorphism and $\sigma(g) = \ov{g}$ on $\GL_r(\C)$, while the real structure $\widetilde{\si}(E,\phi) = (\si^*(\ov{E}), -\si^*(\ov{\phi}))$ corresponds to the \textit{holomorphic} involution $\beta_{\widetilde{\si}}([\rho]) = [\widetilde{\si}\circ\rho\circ\si_*]$
where $\widetilde{\si}(g) = \,^t g^{-1}$ on $\GL_r(\C)$, as studied for instance in \cite{Sco}. Note that the involution $\beta_{\widetilde{\si}}$ is well-defined for the same reasons that $\beta_{\si}$ is well-defined. Indeed, like the involution $\si(g) = \ov{g}$, the involution $\widetilde{\si}(g)=\,^tg^{-1}$ also sends the element $e^{i\frac{2\pi}{r}} I_r$ to its inverse, so we can reason exactly as in \cref{Galois_action_Betti} to show that $\beta_{\widetilde{\si}}$ is well-defined. Therefore, there is an induced homeomorphism
$
\Fix(\beta_\si) \simeq \Fix(\beta_{\widetilde{\si}})
$
which is best understood in the context of branes coming from real structures on the moduli space of Higgs bundles. Namely, with respect to the hyperkähler structure of the Hitchin moduli space of harmonic bundles, $\Fix(\beta_\si)$ is an $(A,A,B)$-brane, while $\Fix(\beta_{\widetilde{\si}})$ is an $(A,B,A)$-brane. We refer to  \cite{BaSc1, BaSc2, BiGP} for further details on this. The point now is that, when $\gcd(r,d) = 1$ and $\R X \neq \emptyset$, the real locus $\MD(r,d)^\si$ consists, as we have seen, only of real Higgs bundles. So, on the Betti side, we have an identification
$
\Fix(\beta_\si) =
\Hom^\Z_\Ga\big(\pi_d^\Ga, \GL_r(\C) \rtimes_{\si} \Gamma\big) / \GL_r(\C)\ .
$
As a consequence, we also have
$
\Fix(\beta_{\widetilde{\si}}) =
\Hom^\Z_\Ga\big(\pi_d^\Ga, \GL_r(\C) \rtimes_{\widetilde{\si}} \Gamma\big) / \GL_r(\C)\ .
$
Combining this with Theorem \ref{CC_MBrd}, we obtain the following result on the topology of the affine complex algebraic variety $\Hom^\Z_\Ga(\pi_d^\Ga, \GL_r(\C) \rtimes_{\widetilde{\si}} \Gamma) / \GL_r(\C)$. 

\begin{theorem}\label{cxOrbifoldBetti}
    Let $\Gamma \coloneqq \Z/2\Z$ act algebraically on the Betti moduli space $\MB(r,d)$ via the group morphism
    $\Ga \lra \Out(\pi_d) \times \Out(\GL_r(\C))$
    induced by the real structure of $X$ and the Cartan involution $\widetilde{\si}(g) = \,^t g^{-1}$ of $\GL_r(\C)$. If $\gcd(r,d) =1$ and $\R X \neq \emptyset$, then the fixed-point of this $\Ga$-action is isomorphic to the twisted character variety
    \begin{equation}\label{cx_twisted_char_var}
    \Hom^\Z_\Ga\big(\pi_d^\Ga, \GL_r(\C) \rtimes_{\widetilde{\si}} \Gamma\big) / \GL_r(\C)
    \end{equation}
    and has $2^{n-1}$ connected components.
\end{theorem}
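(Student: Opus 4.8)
The plan is to read off the statement from the identification $\Fix(\beta_\si) \simeq \Fix(\beta_{\widetilde\si})$ established above, together with the component count of $\R\MB(r,d)$ from \cref{CC_MBrd}. The $\Ga$-action in the statement is precisely the one whose nontrivial element acts by $\beta_{\widetilde\si}([\rho]) = [\widetilde\si\circ\rho\circ\si_*]$, so its fixed-point set is $\Fix(\beta_{\widetilde\si})$. First I would record that, since the Cartan involution $\widetilde\si(g) = {}^t g^{-1}$ sends $e^{i\frac{2\pi}{r}}I_r$ to its inverse just as $\si(g) = \ov g$ does, the involution $\beta_{\widetilde\si}$ is well-defined on $\MB(r,d)$ by the argument of \cref{Galois_action_Betti}; being holomorphic, its fixed locus is a complex algebraic subvariety. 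The modular interpretation behind \cref{orbifold_Betti} then applies verbatim with $\widetilde\si$ replacing $\si$: a class $[\rho]$ is fixed by $\beta_{\widetilde\si}$ exactly when $\rho$ extends to a morphism $\pi_d^\Ga \to \GL_r(\C) \rtimes_{\widetilde\si} \Ga$ making the analogue of diagram \eqref{real_rep} commute, which identifies $\Fix(\beta_{\widetilde\si})$ with the twisted character variety \eqref{cx_twisted_char_var}. Here the hypotheses $\gcd(r,d)=1$ and $\R X \neq \emptyset$ are what force only the semidirect (non-quaternionic) factor to occur, as there are no quaternionic bundles in this range.

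Next I would invoke the order-$4$ automorphism $\gamma$ of \eqref{conjugating_the_Galois_action}. The computation \eqref{conjugating_the_Galois_action_pf} gives $\widetilde\si\circ\gamma = \gamma\circ\si$, so $\gamma$ restricts to a homeomorphism $\Fix(\beta_\si)\xrightarrow{\ \sim\ }\Fix(\beta_{\widetilde\si})$. Under the coprimality and $\R X\neq\emptyset$ hypotheses, $\R\MD(r,d)$ consists only of real Higgs bundles, so on the Betti side $\Fix(\beta_\si) = \R\MB(r,d) = \Hom^\Z_\Ga(\pi_d^\Ga, \GL_r(\C)\rtimes_\si\Ga)/\GL_r(\C)$. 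Composing, I obtain a homeomorphism $\R\MB(r,d) \simeq \Fix(\beta_{\widetilde\si})$, which is the asserted identification with \eqref{cx_twisted_char_var}.

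Finally, the component count drops out: by \cref{CC_MBrd}(1), $\R\MB(r,d)$ has $2^{n-1}$ connected components when $\R X\neq\emptyset$ and $\gcd(r,d)=1$, and since $\pi_0$ is a homeomorphism invariant and $\gamma$ is a homeomorphism, $\Fix(\beta_{\widetilde\si})$ has $2^{n-1}$ components as well. I expect the only delicate point to be conceptual rather than computational: $\Fix(\beta_\si)$ is the real locus of an \emph{anti}-holomorphic involution while $\Fix(\beta_{\widetilde\si})$ is the fixed locus of a \emph{holomorphic} one, so these are a priori objects of different nature, and the real content is that the single algebraic automorphism $\gamma$ conjugates one to the other. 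Once this is in hand the count is immediate from \cref{CC_MBrd}, which itself carries all the substantive work back through \cref{CC_real_locus}.
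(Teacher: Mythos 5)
Your proposal is correct and follows essentially the same route as the paper: well-definedness of $\beta_{\widetilde{\si}}$ via the argument of \cref{Galois_action_Betti}, conjugation of $\beta_\si$ and $\beta_{\widetilde{\si}}$ by the order-$4$ automorphism $\gamma$ of \eqref{conjugating_the_Galois_action}, the identification of $\Fix(\beta_\si)$ with the $\si$-twisted character variety under the hypotheses $\gcd(r,d)=1$ and $\R X\neq\emptyset$, and the component count from \cref{CC_MBrd}. The only (harmless) difference is that you obtain the modular description of $\Fix(\beta_{\widetilde{\si}})$ by rerunning the argument of \cref{orbifold_Betti} with $\widetilde{\si}$ in place of $\si$, whereas the paper deduces it from the $\si$-case together with the homeomorphism induced by $\gamma$; your version is, if anything, slightly more explicit on this point.
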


Note that it is not obvious, at the outset, that the complex character variety \eqref{cx_twisted_char_var}, coming from an \textit{orientation-reversing} involution $\sigma$ on the underlying topological surface of the Riemann surface $X$ and a \textit{holomorphic} involution $\widetilde{\si}$ on the complex Lie group $\GL_r(\C)$, can be studied by means of real algebraic geometry, changing the Cartan involution $\widetilde{\si}(g) = \,^t g^{-1}$ to the split real form $\sigma(g) = \ov{g}$ of $\GL_r(\C)$. But the point is that the self-homeomorphism of $\MB(r,d)$ conjugating the involutions $\beta_\si$ and $\beta_{\widetilde{\si}}$ is defined via the nonabelian Hodge correspondence, and that on the Dolbeault side it relies on the $\C^*$-action on $\MD(r,d)$ to provide the automorphism $\gamma$ in \eqref{conjugating_the_Galois_action}. And once this is done, Theorem \ref{cxOrbifoldBetti} becomes a simple reformulation of Theorem \ref{CC_MBrd}, which itself follows from our main result (\cref{CC_real_locus}).


\end{document}